\definecolor{orangebis}{rgb}{0.99,0.25,0.00}
\definecolor{greenbis}{rgb}{0.10,0.85,0.10}
\definecolor{bluebis}{rgb}{0.10,0.30,0.99}
\author{Alejandro Rivera and Hugo Vanneuville}
\title{}
\date{}
\theoremstyle{plain}
\newtheorem{thm}{Theorem}[section]
\newtheorem{prop}[thm]{Proposition}
\newtheorem{lem}[thm]{Lemma}
\newtheorem{cor}[thm]{Corollary}
\newtheorem{claim}[thm]{Claim}
\newtheorem{theorem}[thm]{Theorem}
\newtheorem{proposition}[thm]{Proposition}
\newtheorem{conjecture}[thm]{Conjecture}
\newtheorem{lemma}[thm]{Lemma}
\theoremstyle{definition}
\newtheorem{defi}[thm]{Definition}
\newtheorem{notation}[thm]{Notation}
\newtheorem{condition}[thm]{Condition}
\newtheorem{remark}[thm]{Remark}
\newcommand{\lobyh}{4.5}
\newcommand{\tassion}{1.1}
\newcommand{\harris}{4.6}
\newcommand{\duality}{\text{\textup{A.11}}}
\newcommand{\harrisdiscrete}{\text{\textup{B.7}}}
\newcommand{\tassiondiscrete}{\text{\textup{B.2}}}
\newcommand{\polynomially}{\text{\textup{B.6}}}
\newcommand{\transversality}{\text{\textup{A.9}}}
\newcommand{\quasiindependence}{1.12}
\newcommand{\N}{\mathbb{N}}
\newcommand{\R}{\mathbb{R}}
\newcommand{\Z}{\mathbb{Z}}
\newcommand{\Pro}{\mathbb{P}}
\newcommand{\prob}{\mathbb{P}}
\newcommand{\E}{\mathbb{E}}
\newcommand{\T}{\mathbb{T}}
\newcommand{\cross}{\text{\textup{Cross}}}
\newcommand{\cov}{\text{\textup{Cov}}}
\newcommand{\var}{\text{\textup{Var}}}
\newcommand{\piv}{\text{\textup{Piv}}}
\newcommand{\arm}{\text{\textup{Arm}}}
\newcommand{\Circ}{\text{\textup{Circ}}}
\newcommand{\Infl}{\text{\textup{Infl}}}
\newcommand{\Piv}{\text{\textup{Piv}}}
\newcommand{\ann}{\text{\textup{Ann}}}
\newcommand{\fold}{\text{\textup{Fold}}}
\newcommand{\fivecross}{\text{\textup{MultiCross}}}
\newcommand{\vp}{\overrightarrow{p}}
\renewcommand{\Z}{\mathbb{Z}}
\renewcommand{\N}{\mathbb{N}}
\def\T{\mathbb{T}}
\newcommand{\un}{\mathds{1}}
\newcommand{\cond}{\, \Big| \,}
\renewcommand{\textbf}[1]{\begingroup\bfseries\mathversion{bold}#1\endgroup}
\def\calD{\mathcal{D}}
\def\calE{\mathcal{E}}
\def\calG{\mathcal{G}}
\def\calN{\mathcal{N}}
\def\calT{\mathcal{T}}
\def\calV{\mathcal{V}}
\def\sfF{\mathsf{F}}
\def\var{\mathop{\mathrm{Var}}}
\def\cov{\mathrm{Cov}}
\def\E{\mathbb{E}} 
\def \eps {\varepsilon}
\def\<#1{\langle #1\rangle}
\def\bi{\begin{itemize}}  
\def\ei{\end{itemize}}
\def\bnum{\begin{enumerate}} 
\def\enum{\end{enumerate}}
\def\ni{\noindent}
\def\bf{\bfseries}
\numberwithin{equation}{section}
\title{The critical threshold for Bargmann-Fock percolation}
\author{Alejandro Rivera\thanks{Univ. Grenoble Alpes, UMP5582, Institut Fourier, 38000 Grenoble, France, supported by the ERC grant Liko No 676999} \hspace{1cm} Hugo Vanneuville\thanks{Univ. Lyon 1, Institut Camille Jordan, 69100 Villeurbanne, France, supported by the ERC grant Liko No 676999}}
\date{}
\begin{document}

\maketitle
\begin{abstract}
In this article, we study the excursion sets $\calD_p=f^{-1}([-p,+\infty[)$ where $f$ is a natural real-analytic planar Gaussian field called the Bargmann-Fock field. More precisely, $f$ is the centered Gaussian field on $\R^2$ with covariance $(x,y) \mapsto \exp(-\frac{1}{2}|x-y|^2)$. Alexander has proved that, if $p \leq 0$, then a.s. $\calD_p$ has no unbounded component. We show that conversely, if $p>0$, then a.s. $\calD_p$ has a unique unbounded component. As a result, the critical level of this percolation model is $0$. We also prove exponential decay of crossing probabilities under the critical level. To show these results, we rely on a recent box-crossing estimate by Beffara and Gayet. We also develop several tools including a KKL-type result for biased Gaussian vectors (based on the analogous result for product Gaussian vectors by Keller, Mossel and Sen) and a sprinkling inspired discretization procedure. These intermediate results hold for more general Gaussian fields, for which we prove a discrete version of our main result.
\end{abstract}

\bigskip

\begin{figure}[!h]
\centering
\begin{subfigure}{.5\textwidth}
  \centering
  \includegraphics[width=0.9\textwidth]{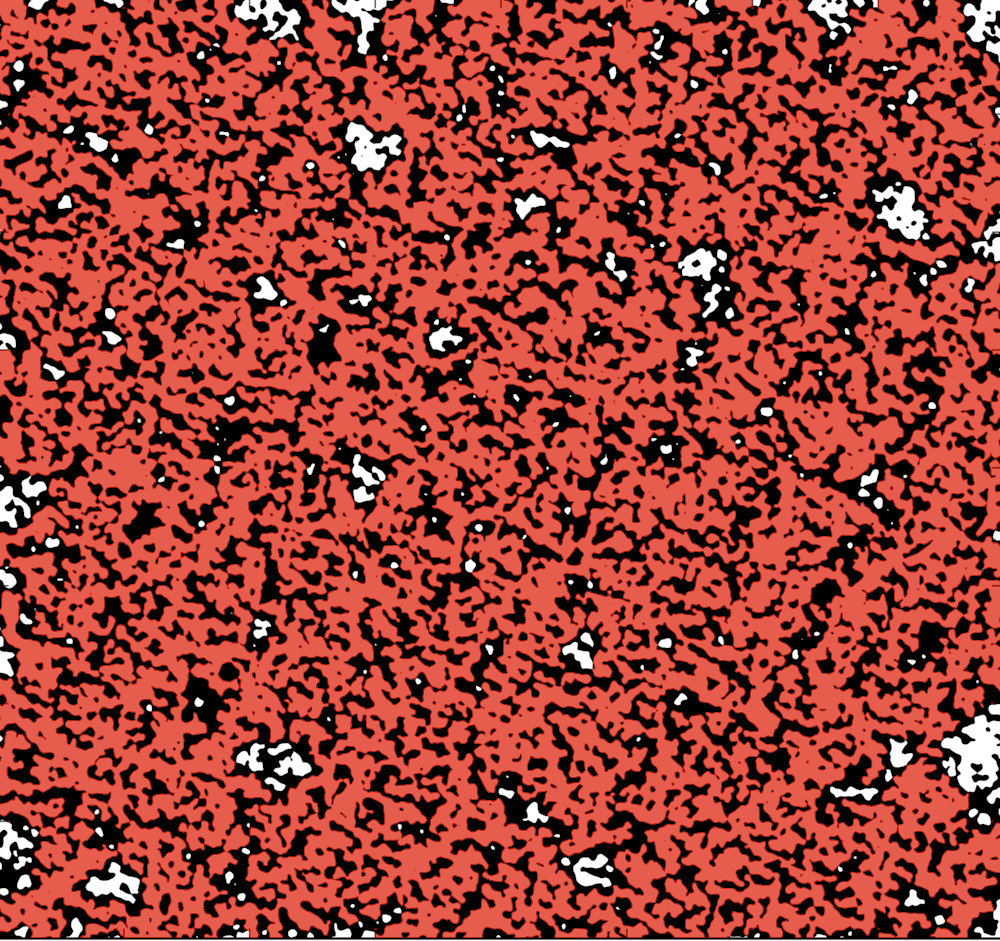}
  \label{fig:sub1}
\end{subfigure}%
\begin{subfigure}{.5\textwidth}
  \centering
  \includegraphics[width=0.9\textwidth]{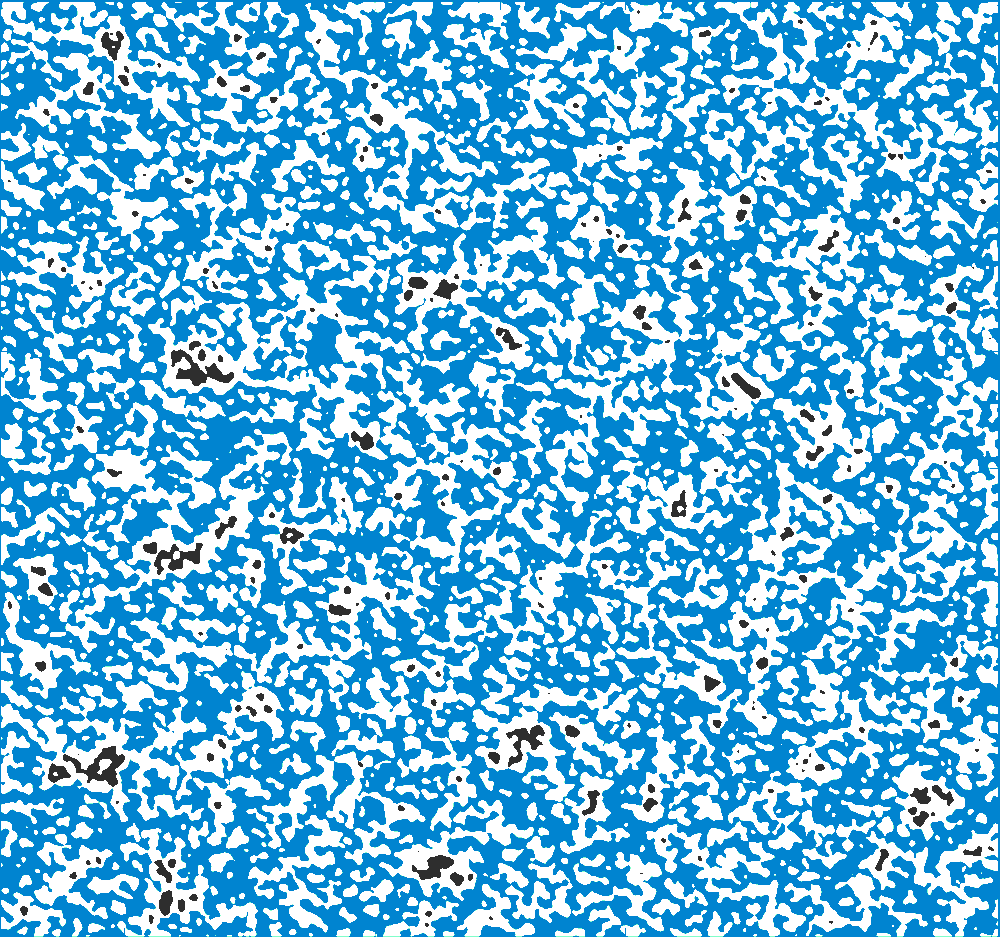}
  \label{fig:sub2}
\end{subfigure}
\caption{Percolation for an approximation of the Bargmann-Fock field $f$. On the left hand side, the region $f \geq 0.1$ is colored in black, the small components of the region $f <  0.1$ are colored in white, and the giant component of the region $f <  0.1$ is colored in red. On the right hand side, the small components of the region $f \geq -0.1$ are colored in black, the giant component of the region $f \geq -0.1$ is colored in blue, and the region $f < -0.1$ is colored in white. The two pictures correspond to the same sample of $f$.}
\label{fig:test}
\end{figure}

\tableofcontents

\section{Main results}\label{s.main_results}

In this article, we study the geometry of excursion sets of a planar centered Gaussian field $f$. The \textbf{covariance function} of $f$ is the function $K:\R^2\times\R^2\rightarrow\R$ defined by:
\[
\forall x,y\in\R^2,\ K(x,y)=\E[f(x)f(y)] \, .
\]
We assume that $f$ is normalized so that for each $x\in\R^2$, $K(x,x)=\var(f(x))=1$, that it is non-degenerate (i.e. for any pairwise distinct $x_1,\dots,x_k\in\R^2$, $(f(x_1),\cdots,f(x_k))$ is non-degenerate), and that it is a.s. continuous and stationary. In particular, there exists a strictly positive definite continuous function $\kappa:\R^2\rightarrow[-1,1]$ such that $\kappa(0)=1$ and, for each $x,y\in\R^2$, $K(x,y)=\kappa(x-y)$. For our main results (though not for all the intermediate results), we will also assume that $f$ is positively correlated, which means that $\kappa$ takes only non-negative values. We will also refer to $\kappa$ as covariance function when there is no possible ambiguity. For each $p\in\R$ we call \textbf{level set} of $f$ the random set $\mathcal{N}_p:=f^{-1}(-p)$ and \textbf{excursion set} of $f$ the random set $\mathcal{D}_p:=f^{-1}([-p,+\infty[)$.\footnote{This convention, while it may seem counterintuitive, is convenient because it makes $\calD_p$ increasing both in $f$ and in $p$. See Section~\ref{s.strategy} for more details.}\\

These sets have been studied through their connections to percolation theory (see~\cite{molchanov1983percolationi,molchanov1983percolationii,molchanov1986percolationiii}, \cite{alex_96}, \cite{bs_07}, \cite{bg_16}, \cite{bm_17}, \cite{bmw_17}, \cite{rv_rsw}). In this theory, one wishes to determine whether of not there exist unbounded connected components of certain random sets. So far, we know that $\calD_0$ has a.s. only bounded components for a very large family of positively correlated Gaussian fields:
\begin{thm}[Theorem~2.2 of \cite{alex_96}]\label{t.alex}
Assume that for each $x\in\R^2$, $\kappa(x)\geq 0$, that $f$ is a.s. $C^1$ and ergodic with respect to translations. Assume also that for each $p\in\R$, $f$ has a.s. no critical points at level $p$. Then, a.s. all the connected components of $\calD_0$ are bounded.
\end{thm}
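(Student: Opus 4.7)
The proof is a Zhang-style self-duality argument, resting on three pillars: (i) the distributional symmetry $f \distr -f$, which makes the dual set $\calD^{-} := \{f \leq 0\}$ have the same law as $\calD_0$; (ii) Pitt's FKG inequality for positively correlated Gaussian fields, which applies because $\kappa \geq 0$; and (iii) the $C^1$-regularity together with the absence of critical points at level zero, which ensures that $\calN_0 = f^{-1}(0)$ is almost surely a smooth embedded $1$-manifold of $\R^2$ with no singularities. Ingredient (iii) is what installs a clean planar duality between $\calD_0$ and $\calD^{-}$: along any arc of $\calN_0$ the two phases sit on opposite sides, so $\calD^{-}$ plays the role of the ``dual'' percolation configuration to $\calD_0$.

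We proceed by contradiction and assume that $\P(\calD_0 \text{ has an unbounded component}) > 0$; by ergodicity this probability equals $1$, and by (i) the same holds for $\calD^{-}$. Fix a large square $Q_n = [-n,n]^2$. For each side $s \in \{L,R,T,B\}$, let $A_n^s$ be the increasing event that some unbounded connected component of $\calD_0 \setminus \mathring Q_n$ touches side $s$ of $\partial Q_n$. Since an unbounded component meeting $Q_n$ must exit through at least one side, $\P(A_n^L \cup A_n^R \cup A_n^T \cup A_n^B) \to 1$ as $n \to \infty$, and Pitt's FKG applied to the decreasing complements yields
\begin{equation*}
\P\bigl((A_n^L)^c \cap (A_n^R)^c \cap (A_n^T)^c \cap (A_n^B)^c\bigr) \;\geq\; \prod_{s} \P\bigl((A_n^s)^c\bigr).
\end{equation*}
The left-hand side tends to $0$; a balancing argument combining the central symmetry $\kappa(-x) = \kappa(x)$ with stationarity across rectangles then forces each individual $\P(A_n^s)$ to tend to $1$. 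By (i), the four analogous arm events $(A_n^s)^*$ defined for $\calD^{-}$ have the same probabilities. A union bound hence yields, for $n$ large, a positive-probability event on which eight pairwise disjoint unbounded arcs exit $\partial Q_n$: four contained in $\calD_0$ and four in $\calD^{-}$, one of each type from each side of $Q_n$.

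This last event produces a topological contradiction. The four $\calD_0$ arcs partition $\R^2 \setminus Q_n$ into four unbounded sectors, each adjacent to a pair of consecutive sides of $Q_n$. By (iii), each $\calD^{-}$ arc is separated from $\calD_0$ by the smooth $1$-manifold $\calN_0$ and must therefore lie entirely in one sector. Tracking the cyclic order in which the eight arcs leave $\partial Q_n$, one sees that a $\calD_0$ arc and a $\calD^{-}$ arc both emanating from the same side cannot coexist with the six remaining arcs without two arcs of opposite phase crossing, violating $\calD_0 \cap \calD^{-} = \calN_0$. Hence $\calD_0$ has no unbounded component a.s.

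\paragraph{Main obstacle.} The most delicate step is balancing the four arm probabilities so that each tends to $1$: with only the central symmetry $\kappa(-x) = \kappa(x)$, left--right arm probabilities are paired and top--bottom ones are paired, but the horizontal and vertical pairs need not be. One either weakens the topological step to require only three arms per phase (which follow from FKG and the central symmetry) and notes that three arms are already topologically incompatible with three dual arms, or one replaces squares by rectangles of tuned aspect ratio and exploits stationarity to equalise the four events. Once $\calN_0$ is known to be a smooth $1$-manifold, the planar topology in the final step is standard.
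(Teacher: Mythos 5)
Your route is a from-scratch Zhang-type self-duality argument, whereas the paper's proof is essentially a reduction: Pitt's theorem gives the FKG hypothesis of Alexander's Theorem~2.2, that theorem gives boundedness of all components of $\calN_0$, and then ergodicity, the symmetry $f \distr -f$ and the no-critical-point assumption transfer the statement to $\calD_0$. Taking a different route is fine in principle, but your argument has a genuine gap at its crucial step: the claimed ``topological contradiction'' from eight arms is false. Four unbounded arcs of $\calD_0$ and four unbounded arcs of $\calD^{-}$, one of each type emanating from each side of $Q_n$, can perfectly well coexist: picture a pinwheel configuration in which eight disjoint unbounded sectors of alternating sign meet the square, so that each side emits one positive and one negative unbounded arc, with the eight exit points in alternating cyclic order and no two arcs of opposite phase crossing. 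Nothing in planarity or in the smoothness of $\calN_0$ forbids this; your cyclic-order argument (and the weakened ``three arms versus three dual arms'' fallback in your last paragraph) therefore does not produce a contradiction. What Zhang's argument actually uses is \emph{uniqueness} of the unbounded component in each phase: uniqueness lets you join the left and right primal arms inside one connected unbounded subset of $\calD_0$ and the top and bottom dual arms inside one connected unbounded subset of $\calD^{-}$, and only then does planar separation force an intersection of the two phases off $\calN_0$. You never state, prove, or cite such a uniqueness result, and it is not free in this continuum Gaussian setting (it requires a Burton--Keane/finite-energy type argument, which is part of what Alexander's paper supplies); the pinwheel scenario is exactly the kind of multi-component configuration that uniqueness is needed to exclude.

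Two further points would need repair even granting uniqueness. First, the ``balancing'' of horizontal versus vertical arm probabilities, which you flag as the main obstacle, is a real issue: the hypotheses give stationarity and $\kappa(-x)=\kappa(x)$ but no invariance under $\frac{\pi}{2}$-rotation, so the square-root trick only pairs left with right and top with bottom, and your two suggested fixes are respectively based on the false three-arm claim and left unproven. Second, your events $A_n^s$ concern the closed sets $\{f\ge 0\}$ and $\{f\le 0\}$, which intersect along $\calN_0$, so even the final separation step must be phrased for the open phases $\{f>0\}$ and $\{f<0\}$ (using the no-critical-point hypothesis) rather than asserted as $\calD_0\cap\calD^{-}=\calN_0$ being ``violated''. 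As it stands, the proposal does not prove the theorem; the shortest correct path is the paper's, namely quoting Alexander's level-line theorem and deducing the excursion-set statement from it.
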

\begin{proof}
By \cite{pitt1982positively}, the fact that $\kappa$ is non-negative implies that $f$ satisfies the FKG inequality\footnote{What Alexander calls the FKG inequality is the positive association property of any finite dimensional marginal distributions of the field, see the beginning of his Section 2.} so we can apply Theorem 2.2 of \cite{alex_96}. Hence, all its level lines are bounded. By ergodicity, $\calD_0$ has either a.s. only bounded connected components or a.s. at least one unbounded connected component. Since $f$ has a.s. no critical points at level $0$, a.s., the boundary of $\calD_0$ equals $\calN_0$ and is a $C^1$ submanifold of $\R^2$. If $\calD_0$ had a.s. an unbounded connected component, then, by symmetry (since $f$ is centered) this would also be the case for $\overline{\R^2\setminus\calD_0}$. But this would imply that $\calN_0$ has an unbounded connected component, thus contradicting Theorem 2.2 of \cite{alex_96}.-
\end{proof}
More recently, Beffara and Gayet~\cite{bg_16} have proved a more quantitative version of Theorem~\ref{t.alex} which holds for a large family of positively correlated stationary Gaussian fields such that $\kappa(x) = O(|x|^{-\alpha})$ for some $\alpha$ sufficiently large. In~\cite{rv_rsw}, the authors of the present paper have revisited the results by~\cite{bg_16} and weaken the assumptions on $\alpha$. More precisely, we have the following:

\begin{thm}[\cite{bg_16} for $\alpha$ sufficiently large,\cite{rv_rsw}]\footnote{More precisely, this is Propositions~$\lobyh$ and $\harris$ of~\cite{rv_rsw}. Moreover, this is Theorem~5.7 of~\cite{bg_16} for $\alpha$ sufficiently large and with slightly different assumptions on the differentiability and on the non-degeneracy of $\kappa$.}\label{t.c_RSW}
Assume that $f$ is a non-degenerate, centered, normalized, continuous, stationary, positively correlated planar Gaussian field that satisfies the symmetry assumption Condition~\ref{a.std} below. Assume also that $\kappa$ satisfies the differentiability assumption Condition~\ref{a.decay} below and that $\kappa(x) \leq C |x|^\alpha$ for some $C<+\infty$ and $\alpha > 4$. Then, there exist $C'=C'(\kappa)<+\infty$ and $\delta=\delta(\kappa)>0$ such that for each $r>0$, the probability that there exists a connected component of $\calD_0$ which connects $0$ to a point at distance $r$ is at most $C' r^{-\delta}$. In particular, a.s. all the connected components of $\calD_0$ are bounded.
\end{thm}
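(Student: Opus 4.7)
I would follow the standard RSW/duality strategy for critical planar percolation: extract the polynomial one-arm bound first, then deduce the qualitative statement by a Borel--Cantelli argument along $r_n = 2^n$. Let $\arm(r)$ denote the event that a connected component of $\calD_0$ joins $0$ to a point at distance $r$. The symmetry $f \distr -f$ interchanges $\calD_0$ with $\overline{\R^2\setminus\calD_0}$, so planar duality forces $\arm(r)$ to be incompatible with the existence of any circuit of $\{f<0\}$ surrounding $0$ inside an annulus $\ann(s,r)\subset[-r,r]^2$, $s<r$. Choosing dyadic radii, this gives
\[
\arm(r)\ \subset\ \bigcap_{k=1}^{K-1}A_k^c,\qquad A_k := \bigl\{\exists\text{ circuit in } \{f<0\} \text{ around } 0 \text{ in } \ann(2^{k-1},2^k)\bigr\},
\]
with $K=\lfloor\log_2 r\rfloor$. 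The proof then splits into a one-scale lower bound $\Pro(A_k)\geq c_0$ and a multi-scale decoupling.

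\textbf{One scale.} Here I would invoke the already-available box-crossing (RSW) estimates at $p=0$ from \cite{bg_16,rv_rsw}: under Conditions~\ref{a.std} and~\ref{a.decay} with $\alpha>4$, every rectangle of fixed aspect ratio $\rho$ is horizontally crossed by $\calD_0$ with probability $\geq c(\rho)>0$, uniformly in the scale. By $f \distr -f$, the same holds for the dual crossing event. Since $\kappa\geq 0$, the field is FKG (by \cite{pitt1982positively}), so gluing four overlapping long rectangles around $\ann(2^{k-1},2^k)$ produces a circuit in $\{f<0\}$ and yields $\Pro(A_k)\geq c(3)^4=:c_0>0$, uniformly in $k$.

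\textbf{Across scales, the main obstacle.} Combining the one-scale bounds into an upper bound on $\Pro(\bigcap_k A_k^c)$ is the heart of the matter. The $A_k^c$ are increasing events, so FKG pushes in the wrong direction, and $f$ is genuinely correlated so one cannot multiply probabilities freely. The right tool is a quasi-independence estimate: one couples $f$ on each annulus with an independent copy and controls the coupling error through a Slepian/covariance-comparison inequality applied to the cross-covariances between well-separated dyadic annuli. The hypothesis $\alpha>4$ is exactly what makes those cross-covariances summable across dyadic scales, converting the single-scale bound into a telescoping inequality
\[
\Pro\Bigl(\bigcap_{j\leq k+1}A_j^c\Bigr)\ \leq\ (1-c_0/2)\,\Pro\Bigl(\bigcap_{j\leq k}A_j^c\Bigr).
\]
Iterating over $K=\Theta(\log r)$ scales produces $\Pro(\arm(r))\leq C' r^{-\delta}$ for some $\delta=\delta(\kappa)>0$, which is the desired bound. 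This decoupling is the delicate, technical step; the RSW and gluing steps are by contrast robust once the box-crossing estimate is in hand.
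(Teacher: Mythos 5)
The paper does not actually prove this statement: it is imported from \cite{rv_rsw} (Propositions $\lobyh$ and $\harris$ there), with the box-crossing input going back to \cite{bg_16}. Your outline --- RSW at level $0$, the symmetry $f\distr -f$ together with FKG to build circuits of $\{f<0\}$ in dyadic annuli, and a quasi-independence estimate (which is exactly where $\alpha>4$ enters) to decouple the scales --- is precisely the strategy of the cited proof, so the approach is sound. Two small imprecisions worth noting: the decoupling produces an \emph{additive} error of order $2^{(4-\alpha)k}$ at scale $2^k$, not the clean multiplicative recursion $\Pro(\bigcap_{j\le k+1}A_j^c)\le(1-c_0/2)\Pro(\bigcap_{j\le k}A_j^c)$ you wrote (which would fail once $\Pro(\bigcap_{j\le k}A_j^c)$ drops below the coupling error); iterating the additive version still yields $\Pro(\arm(r))\le C'r^{-\delta}$ with $\delta$ depending on both $c_0$ and $\alpha-4$. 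And since consecutive dyadic annuli are adjacent, you must skip scales (or drop one annulus) so that the regions handed to the quasi-independence lemma are genuinely separated, and you need the standard remark that circuits in $\{f\le 0\}$ and in $\{f<0\}$ occur with the same probability.
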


A remaining natural question is whether or not, for $p>0$, the excursion set $\mathcal{D}_p$ has an unbounded component. Our main result (Theorem~\ref{t.main} below) provides an answer to this question for a specific, natural choice of $f$, arising naturally from real algebraic geometry: the Bargmann-Fock model, that we now introduce. \textbf{The planar Bargmann-Fock field} is defined as follows. Let $(a_{i,j})_{i,j\in\N}$ be a family of independent centered Gaussian random variables of variance $1$. For each $x=(x_1,x_2)\in\R^2$, the Bargmann-Fock field at $x$ is:
\[
f(x)=e^{-\frac{1}{2}|x|^2}\sum_{i,j\in\N}a_{i,j}\frac{x_1^ix_2^j}{\sqrt{i!j!}} \, .
\]
The sum converges a.s. in $C^\infty(\R^2)$ to a random analytic function. Moreover, for each $x,y\in\R^2$:
\[
K(x,y)=\E[f(x)f(y)]=\exp\Big(-\frac{1}{2}|x-y|^2\Big) \, .
\]
For a discussion of the relation of the Bargmann-Fock field with algebraic geometry, we refer the reader to the introduction of~\cite{bg_16}. Theorem~\ref{t.c_RSW} applies to the Bargmann-Fock model (see Subsection~\ref{ss.cond} for the non-degeneracy condition). Hence, if $p \leq 0$ then a.s. all the connected components of $\calD_p$ are bounded. In this paper, we prove that on the contrary if $p>0$ then a.s. $\calD_p$ has a unique unbounded component, thus obtaining the following result:
\begin{theorem}\label{t.main}
Let $f$ be the planar Bargmann-Fock field. Then, the probability that $\mathcal{D}_p$ has an unbounded connected component is $1$ if $p>0$, and $0$ otherwise. Moreover, if it exists, such a component is a.s. unique.
\end{theorem}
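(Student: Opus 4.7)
The direction $p\leq 0$ follows directly from Theorem~\ref{t.c_RSW} (via Alexander's Theorem~\ref{t.alex}), so the remaining task is to show that $\calD_p$ almost surely has a unique unbounded component when $p>0$. The plan is the classical percolation program adapted to the Gaussian setting: first establish a sharp threshold at $p=0$, then use it to produce an unbounded component for every $p>0$ via a standard gluing construction, and finally appeal to a Burton--Keane type argument for uniqueness.

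The heart of the proof is the sharp threshold. Theorem~\ref{t.c_RSW} provides the RSW input: at $p=0$, the probability that $\calD_0$ contains a crossing of a rectangle of fixed aspect ratio is bounded away from $0$ and $1$ uniformly in scale. I would aim to show that for every $p_0>0$, the crossing probability $\theta_r(p_0)$ of a large $r\times 2r$ rectangle by $\calD_{p_0}$ tends to $1$ as $r\to\infty$. The standard route is to bound below the $p$-derivative of $\theta_r(p)$ by a sum of pivotal/influence-type quantities over small subregions, and then invoke a KKL-type inequality to show that this sum grows at least logarithmically in $r$, which upon integration in $p$ forces $\theta_r(p_0)\to 1$. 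Two technical ingredients are needed: a discretization of $f$ at a small scale $\eps$, converting the continuous field into a finite-dimensional Gaussian vector whose crossing events are well approximated by those of $f$; and a KKL-type result for the (non-product, biased) Gaussian measure on that vector, of the kind announced in the abstract. A sprinkling-type comparison then absorbs the discretization error into a small additional shift in $p$, so that the estimate transfers back to the continuous field.

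Once $\theta_r(p_0)\to 1$ has been established for every $p_0>0$, existence of an unbounded component follows from a standard construction: using the FKG inequality (valid since $\kappa\geq 0$, as recalled after Theorem~\ref{t.alex}) to glue horizontal and vertical crossings of a nested dyadic sequence of annuli, and then Borel--Cantelli, one obtains a.s. crossings of all large annuli around the origin, and hence an unbounded connected component of $\calD_p$. Uniqueness is then a Burton--Keane trifurcation argument: the law of $f$ is stationary and ergodic under lattice translations, and the absolute continuity of Gaussian conditional laws plays the role of the finite-energy property needed to rule out two or more infinite clusters. The \textbf{main obstacle} is the sharp-threshold step: proving an effective KKL-type lower bound on influences for biased, correlated Gaussian vectors, and coupling it to a discretization/sprinkling comparison whose error is absorbed by an arbitrarily small shift in $p$, so that the threshold actually closes up at $p=0$ uniformly in the scale $r$.
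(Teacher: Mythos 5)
Your outline follows the paper's strategy almost exactly for the main step: discretize at mesh $\eps$, differentiate the discrete crossing probability in $p$ (Proposition~\ref{p.russo}), apply the KKL theorem for non-product biased Gaussian vectors (Theorem~\ref{t.KMSnonproduct}) together with the discrete RSW input at $p=0$ (Theorem~\ref{t.epsRSW}), and transfer back to the continuum by sprinkling from level $p/2$ to $p$ (Proposition~\ref{p.sprinkling}); existence then follows from summable crossing bounds, gluing and Borel--Cantelli, exactly as in Lemma~\ref{l.criterion}. Two remarks, though. First, what you flag as the ``main obstacle'' is precisely where all the work lies, and your sketch omits the two quantitative inputs that make the scheme close: a polynomial bound on the maximal influence, obtained from one-arm estimates at the self-dual level (Proposition~\ref{p.infl_decr}), and, crucially, the bound $\|\sqrt{K^\eps}\|_{\infty,op}\leq C\,\eps^{-1}\log(1/\eps)$ of Proposition~\ref{p.square_root_estimate}, which is the \emph{only} Bargmann--Fock--specific ingredient and is why the theorem is not stated for general fields. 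Because this norm blows up as $\eps\downarrow 0$, the KKL bound forces a very coarse mesh ($\eps\geq\log^{-(1/2-\delta)}(R)$, the paper takes $\eps=\log^{-1/3}(R)$), so your phrase ``crossing events are well approximated by those of $f$'' cannot be taken literally: quantitative two-sided approximation would require $\eps\leq R^{-(1+\delta)}$, which is incompatible, and only the one-directional sprinkled comparison survives -- you do invoke sprinkling, but the incompatibility is the reason it is unavoidable rather than a convenience.

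Second, your uniqueness argument genuinely differs from the paper's. You propose Burton--Keane with a finite-energy property supplied by ``absolute continuity of Gaussian conditional laws''; this is not an off-the-shelf fact for continuum excursion sets of a correlated field and would require a separate argument (conditioning a Gaussian field on its values in a region affects the field everywhere, so insertion tolerance for unbounded-range modifications needs care). The paper avoids this entirely: the same summable crossing estimates that give existence also give, via twelve crossings of rectangles, circuits in $\calD_p\cap\ann(2^k,2^{k+1})$ for all large $k$, and any unbounded component must meet all these circuits, forcing uniqueness (see the second half of the proof of Lemma~\ref{l.criterion}). This circuit route is both cheaper and already contained in the box-crossing criterion, whereas your route would add a nontrivial finite-energy verification.
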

As a result, the ``critical threshold'' of this continuum percolation model is $p=0$. Before saying a few words about the proof of Theorem~\ref{t.main}, let us state the result which is at the heart of the proof of Theorem~\ref{t.c_RSW}, both in~\cite{bg_16} and in~\cite{rv_rsw}. This result is a \textbf{box-crossing estimate}, which is an analog of the classical Russo-Seymour-Welsh theorem for planar percolation. This was proved in~\cite{bg_16} for a large family of positively correlated stationary Gaussian fields such that $\kappa(x) = O(|x|^{-325})$. In~\cite{bm_17}, Beliaev and Muirhead have lowered the exponent $\alpha=325$ to any $\alpha > 16$. In~\cite{rv_rsw}, we have obtained that such a result holds with any exponent $\alpha > 4$. More precisely, we have the following:

\begin{thm}[\cite{bg_16} for $\alpha > 325$,~\cite{bm_17} for $\alpha > 16$,\cite{rv_rsw}]\footnote{More precisely, this is Theorem~$\tassion$ of~\cite{rv_rsw}. Moreover, this is Theorem~4.9 of~\cite{bg_16} (resp. Theorem~1.7 of~\cite{bm_17}) for $\alpha \geq 325$ (resp. $\alpha > 16$) and with slightly different assumptions on the differentiability and on the non-degeneracy of $\kappa$.}\label{t.RSW}
With the same hypotheses as Theorem~\ref{t.c_RSW}, for every $\rho  \in ]0,+\infty[$ there exists $c=c(\rho,\kappa)>0$ such that for each $R \in ]0,+\infty[$, the probability that there is a continuous path in $\mathcal{D}_0\cap[0,\rho R]\times [0,R]$ joining the left side of $[0,\rho R]\times [0,R]$ to its right side is at least $c$. Moreover, there exists $R_0 = R_0(\kappa) < +\infty$ such that the same result holds for $\mathcal{N}_0$ as long as $R\geq R_0$.
\end{thm}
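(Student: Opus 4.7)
The plan is to adapt the Russo-Seymour-Welsh strategy from Bernoulli percolation to the positively correlated Gaussian setting: first establish a quasi-independence estimate, then prove crossing probabilities for squares via symmetry and self-duality, then extend to rectangles of arbitrary aspect ratio via Tassion's inductive scheme, and finally transfer $\mathcal{D}_0$-crossings to $\mathcal{N}_0$-crossings.

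First I would set up quasi-independence. Realize $f$ as a convolution $f = q \ast W$ of $\R^2$-white noise with some $q \in L^2(\R^2)$, and at each scale decompose $q = q_{\ins} + q_{\ext}$ so that $f_1 := q_{\ins} \ast W$ depends only on information near the region of interest while $f_2 := q_{\ext} \ast W$ is small. Using $\kappa(x) \leq C|x|^{-\alpha}$ with $\alpha > 4$ together with Gaussian sup-norm tail bounds, one obtains that the sup norm of $f_2$ on a box of diameter $R$ is polynomially small; the threshold $\alpha > 4$ is precisely what makes the errors summable on the plane after rescaling to macroscopic scale. Events depending on two well-separated regions then become approximately independent with polynomial correlation decay in the separation.

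Second, I would establish square crossings at level zero by symmetry. By Condition~\ref{a.std} (reflection and $\pi/2$-rotation invariance of $\kappa$) and the distributional identity $f \distr -f$, the probability of a horizontal $\mathcal{D}_0$-crossing of a square equals the probability of a vertical $\{f \leq 0\}$-crossing of the same square. Planar duality (up to the a.s. null event that $f$ has a critical point at level $0$) ensures that exactly one of these two events occurs in any square, so each has probability $\geq 1/2$. Third, invoking the FKG inequality for $f$ (Pitt's theorem, since $\kappa \geq 0$) together with the quasi-independence from the first step, I would apply Tassion's modern RSW argument to bootstrap square crossings into rectangle crossings of any fixed aspect ratio $\rho$, producing the constant $c(\rho,\kappa) > 0$ for $\mathcal{D}_0$.

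Finally, for the statement on $\mathcal{N}_0$, I would combine via FKG a horizontal $\mathcal{D}_0$-crossing of $[0,\rho R]\times [0,R]$ with a horizontal $\{f \leq 0\}$-crossing of $[0,\rho R]\times [R,2R]$, each of positive probability uniformly in $R$ by what precedes and symmetry. Standard topological arguments relying on the $C^1$-smoothness of $f$ and the a.s. absence of critical points at level $0$ then show that the common boundary of these two colored regions in $[0,\rho R]\times[0,2R]$ contains a connected piece of $\mathcal{N}_0$ crossing this larger rectangle from left to right, from which the claim follows after rescaling for $R$ larger than some $R_0(\kappa)$. The main obstacle is the quasi-independence step with the sharp decay $\alpha > 4$: previous approaches required $\alpha > 325$ in~\cite{bg_16} or $\alpha > 16$ in~\cite{bm_17}, and pushing the exponent down to $\alpha > 4$ requires delicate control of the Gaussian coupling between the local and far-field parts of $f$ over many boxes simultaneously, making this the technical heart of the argument.
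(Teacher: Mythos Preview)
This theorem is not proved in the present paper; it is quoted as an external result (Theorem~$\tassion$ of~\cite{rv_rsw}, with earlier versions in~\cite{bg_16,bm_17}). Your outline of the proof --- quasi-independence, square crossings by self-duality, and Tassion's bootstrapping to arbitrary aspect ratio --- is indeed the architecture used in those references, so at the level of strategy you are on target. One difference worth noting: the quasi-independence in~\cite{rv_rsw} is obtained via a Gaussian interpolation (Piterbarg-type) argument rather than the white-noise decomposition $f=q\ast W$ you sketch; the latter is a perfectly valid alternative route (used for instance in later work of Muirhead and collaborators) but implicitly requires writing $\hat\kappa$ as a square, which is automatic by Bochner but adds a layer you would need to justify.

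There is, however, a genuine error in your final step for $\mathcal{N}_0$. You propose to ``combine via FKG'' a horizontal $\mathcal{D}_0$-crossing of one sub-rectangle with a horizontal $\{f\leq 0\}$-crossing of an adjacent sub-rectangle. The first event is increasing in $f$ and the second is decreasing, so Pitt's FKG inequality gives $\Pro[A\cap B]\leq \Pro[A]\,\Pro[B]$, the wrong direction; it cannot be used to lower-bound the probability that both occur. The correct tool here is again quasi-independence: place the two sub-rectangles at distance of order $R$ from each other (inside a larger rectangle), so that the polynomial decorrelation yields $\Pro[A\cap B]\geq \Pro[A]\,\Pro[B]-o(1)$ as $R\to\infty$. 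This is exactly why the statement for $\mathcal{N}_0$ only holds for $R\geq R_0(\kappa)$, a threshold your FKG argument would not produce. Once both crossings coexist, your topological conclusion (a nodal arc separating them and hence crossing the ambient rectangle) is correct.
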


In order to prove our main result Theorem~\ref{t.main}, we will use a discrete analog of this box-crossing estimate which goes back to~\cite{bg_16} (see Theorem~\ref{t.epsRSW} below). In Section~\ref{s.strategy}, we expose the general strategy of the proof of Theorem~\ref{t.main}. This proof can be summed up as follows: i) we discretize our model as was done in~\cite{bg_16}, ii) we prove that there is a sharp threshold phenomenon at $p=0$ in the discrete model, iii) we return to the continuum. The results at the heart of our proof of a sharp threshold phenomenon for the discrete model are on the one hand Theorem~\ref{t.epsRSW} (the discrete version of Theorem~\ref{t.RSW}) and on the other hand a Kahn-Kalai-Linial (KKL)-type estimate for biased Gaussian vectors (see Theorem~\ref{t.KMSnonproduct}) that we show by using the analogous estimate for product Gaussian vectors proved by Keller, Mossel and Sen in~\cite{keller2012geometric} (the idea to use a KKL theorem to compute the critical point of a percolation model goes back to Bollob\'{a}s and Riordan~\cite{bollobas2006critical,bollobas2006short}, see Subsection~\ref{ss.kesten} for more details). To go back to the continuum, we apply a sprinkling argument to a discretization procedure tailor-made for our setting (see Proposition \ref{p.sprinkling}). This step is especially delicate since Theorem~\ref{t.KMSnonproduct} gives no relevant information when the discretization mesh is too fine (see Subsection~\ref{ss.discretization} for more details).\\
Most of the intermediate results that we will prove work in a much wider setting, see in particular Proposition~\ref{p.mainmoregnrl} where we explain how, for a large family of Gaussian fields $f$, the proof of an estimate on the correlation function would imply that Theorem~\ref{t.main} also holds for $f$. See also Theorem~\ref{t.epsRSW} which is a discrete analog of Theorem~\ref{t.main} for more general Gaussian fields.\\
As in~\cite{bg_16}, we are inspired by tools from percolation theory. Before going any further, let us make a short detour to present the results of planar percolation we used to guide our research. It will be helpful to have this analogy in mind to appreciate our results.\\

\textbf{Planar Bernoulli percolation} is a statistical mechanics model defined on a planar lattice, say $\Z^2$, depending on a parameter $p\in[0,1]$. Consider a family of independent Bernoulli random variables $(\omega_e)_e$ of parameter $p$ indexed by the edges of the graph $\Z^2$. We say that an edge $e$ is black if the corresponding random variable $\omega_e$ equals $1$ and white otherwise. The analogy with our model becomes apparent when one introduces the following classical coupling of the $(\omega_e)_e$ for various values of $p$. Consider a family  $(U_e)_e$ of independent uniform random variables in $[0,1]$ indexed by the set of edges of $\Z^2$. For each $p\in[0,1]$, let $\omega^p_e=\un_{\lbrace U_e\geq 1-p\rbrace}$. Then, the family $((\omega_e^p)_e)_p$ forms a coupling of Bernoulli percolation with parameters in $[0,1]$. In this coupling, black edges are seen as excursion sets of the random field $(U_e)_e$. Theorem~\ref{t.RSW} is the analog of the Russo-Seymour-Welsh (RSW) estimates first proved for planar Bernoulli percolation in~\cite{russo1978note,seymour1978percolation}. We now state the main result of percolation theory on $\Z^2$, a celebrated theorem due to Kesten.

\begin{theorem}[Kesten's Theorem, \cite{kesten1980critical}]\label{t.kesten}
  Consider planar Bernoulli percolation of parameter $p\in[0,1]$ on $\Z^2$. If $p>1/2$, then a.s. there exists an unbounded connected component made of black edges. On the other hand, if $p\leq1/2$ then a.s. there is no unbounded connected component made of black edges.
\end{theorem}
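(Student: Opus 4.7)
The plan is to combine the self-duality of bond percolation on $\Z^2$, the Russo-Seymour-Welsh (RSW) estimate at the self-dual point, a quantitative sharp-threshold statement, and the Burton-Keane uniqueness argument. Recall first that the dual graph of $\Z^2$ is $\Z^2$ (shifted), that each dual edge is declared black if and only if the primal edge it crosses is white, so under $\prob_p$ dual edges form independent Bernoulli variables of parameter $1-p$, and that a black left--right crossing of a rectangle in the primal exists if and only if no black top--bottom crossing of the same rectangle exists in the dual. At $p=1/2$ the primal and dual laws coincide, hence the horizontal crossing probability of any $n\times(n+1)$ rectangle equals $1/2$, and a Russo-Seymour-Welsh argument then lifts this to uniform positive lower and upper bounds on crossing probabilities of rectangles of any fixed aspect ratio at $p=1/2$.

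For the subcritical side $p\leq 1/2$, I would argue by contradiction. If an infinite black cluster existed with positive probability at some $p\leq 1/2$, monotonicity would give the same at $p=1/2$; by ergodicity and the Burton-Keane finite-energy argument this cluster would be a.s.\ unique, and by self-duality the same would hold for the infinite dual cluster. Zhang's argument then produces in a large square, with positive probability, primal connections to all four sides and dual connections to all four sides simultaneously, which contradicts planarity. Hence no infinite black cluster exists for $p\leq 1/2$.

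For the supercritical side $p>1/2$, a quantitative sharp-threshold statement is needed. By Russo's formula, the derivative in $p$ of the crossing probability of an $R\times R$ square equals the expected number of pivotal edges, and a Kahn-Kalai-Linial type bound on influences (in the spirit of the KKL-type estimate used later in this paper for Gaussian vectors, see Theorem~\ref{t.KMSnonproduct}) forces this derivative to grow at least like $\log R$. Hence the crossing probability jumps from $c$ to $1-c$ on an interval of width $O(1/\log R)$ around $1/2$, so at any fixed $p>1/2$ it tends to $1$ as $R\to\infty$ and a standard multi-scale gluing construction yields an infinite black cluster a.s.\ Uniqueness again follows from Burton-Keane. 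The main obstacle is this sharp-threshold step, which is the heart of Kesten's original argument and which must be reworked from scratch in the Gaussian, non-product setting treated by the present paper.
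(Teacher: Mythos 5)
Your proposal is correct and follows essentially the same route the paper itself outlines in Subsection~\ref{ss.kesten} (the paper only cites Kesten's theorem and lists the ingredients of a proof rather than proving it): self-duality and RSW at $p=1/2$, Russo's formula combined with a KKL bound to obtain a sharp threshold, a box-crossing criterion to build the infinite cluster for $p>1/2$, and a Harris/Zhang-type duality argument for $p\leq 1/2$, with Burton--Keane for uniqueness. The only step you pass over silently is the polynomial bound on the maximal influence (via an arm-event estimate deduced from RSW, uniformly in $p\geq 1/2$), which is needed before the KKL inequality yields the $\log R$ growth of the derivative; the Gaussian analogue of that step in the present paper is Proposition~\ref{p.infl_decr}.
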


The parameter $p_c=1/2$ is said to be critical for planar Bernoulli percolation on $\Z^2$. It is also known that, if such an unbounded connected component exists, it is a.s. unique. In Theorem~\ref{t.kesten}, the case where $p \leq 1/2$ goes back to Harris~\cite{harris1960lower}. See Subsection~\ref{ss.kesten} where we explain which are the main ingredients of a proof of Kesten's theorem that will inspire us. Kesten's theorem is closely linked with another, more quantitative result:

\begin{theorem}[Exponential decay, \cite{kesten1980critical}]\label{t.ber_exp}
Consider planar Bernoulli percolation with parameter $p>1/2$ on $\Z^2$. Then, for each $\rho>0$, there exists a constant $c=c(p,\rho)>0$ such that for each $R>0$, the probability that there is a continuous path made of black edges in $[0,\rho R]\times[0,R]$ joining the left side of $[0,\rho R]\times[0,R]$ to its right side is at least $1-e^{-cR}$. 
\end{theorem}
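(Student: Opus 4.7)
My plan is to reduce the theorem, via the standard self-duality of bond percolation on $\Z^2$, to the exponential decay of one-arm probabilities in subcritical Bernoulli percolation, and then to invoke the classical theorem providing that decay.

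First, I would use planar duality. The dual lattice $\Z^2 + (1/2,1/2)$ is isomorphic to $\Z^2$, each primal edge is crossed by exactly one dual edge, and declaring a dual edge \emph{open} iff the corresponding primal edge is white turns the white edges into a Bernoulli bond percolation of parameter $q := 1-p$ on the dual. A standard topological argument shows that the absence of a black left-right crossing of $[0,\rho R]\times[0,R]$ is equivalent to the existence of a white top-bottom crossing of a slightly enlarged rectangle. It therefore suffices to bound the probability of such a dual crossing by $e^{-cR}$.

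Second, since $p > 1/2$ we have $q < 1/2 = p_c(\Z^2)$, so the dual percolation is strictly subcritical. The exponential decay of subcritical Bernoulli percolation, due to Kesten in 1980 and later reproved under weaker hypotheses by Menshikov, Aizenman-Barsky, and Duminil-Copin-Tassion, then yields a constant $c' = c'(q)>0$ such that
\[
\mathbb{P}_q\bigl[v \text{ is connected by an open path to } \partial B_R(v)\bigr] \leq e^{-c' R}
\]
uniformly in $v$ and $R>0$. Third, any top-bottom white crossing of the enlarged rectangle must contain an open dual path starting from one of at most $\rho R + O(1)$ vertices on its top side and reaching the bottom side, which is at distance at least $R$. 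A union bound then gives a bound of $(\rho R + O(1))\,e^{-c'R}$ on the probability of such a crossing, which is at most $e^{-cR}$ for any $c<c'$ and $R$ large; adjusting the constant covers small $R$.

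The main obstacle is clearly the subcritical exponential decay itself, which is the deep step. Kesten's original proof uses a renormalization scheme built on RSW crossing estimates combined with a finite-size criterion; Menshikov and Aizenman-Barsky go through differential inequalities for the susceptibility; and the modern proof of Duminil-Copin-Raoufi-Tassion is based on the OSSS inequality. Any of these routes delivers the required decay, and the rest of the argument above is routine. It is worth noting that the analog of this statement for Bargmann-Fock percolation, alluded to in the abstract, is exactly what the authors will have to re-prove by hand in their continuum setting, since none of the above discrete techniques transfer verbatim.
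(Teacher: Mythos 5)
Your argument is correct, but note that the paper does not actually prove this statement: Theorem~\ref{t.ber_exp} is quoted as classical background and attributed to \cite{kesten1980critical}, so there is no internal proof to compare against. Your reduction is the standard one and is sound: by planar self-duality the complement of a black left-right crossing of $[0,\rho R]\times[0,R]$ forces a white top-bottom crossing in the dual lattice, where the white edges form Bernoulli percolation at parameter $1-p<1/2$; the one-arm probability in subcritical percolation decays exponentially (Kesten, Menshikov, Aizenman--Barsky, Duminil-Copin--Tassion), and a union bound over the $O(\rho R)$ possible starting dual vertices, together with adjusting the constant to absorb small $R$ (where the crossing probability is bounded below by, say, the probability that all edges meeting the rectangle are black), gives the bound $1-e^{-cR}$. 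The only caveat is that the entire difficulty is outsourced to the subcritical sharpness theorem, which is legitimate here since the statement itself is a cited classical result. It is worth contrasting this with the duality-free route that the authors actually follow for the continuum analogue (Theorem~\ref{t.exp}, proved in Subsection~\ref{ss.exp}): there one works directly at the supercritical level, seeds the argument with the fact that crossing probabilities tend to $1$, and runs a renormalization on scales $r_{k+1}=2r_k+\sqrt{r_k}$ using a quasi-independence estimate (Claim~\ref{cl.quasi_ind}) in place of exact independence; this mirrors the classical finite-size/bootstrap proof in Bernoulli percolation (e.g.\ the second proof of Theorem~10 in \cite{bollobas2006percolation}) rather than the subcritical-dual route you chose. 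So your closing remark that the discrete techniques do not transfer is a bit too strong: the renormalization technique is precisely the one the paper adapts, the genuinely non-transferable ingredient being exact spatial independence, replaced by quasi-independence.
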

The value $p=1/2$ is significant because with this choice of parameter, the induced percolation model on the dual graph of $\Z^2$ has the same law as the initial one. For this reason, $1/2$ is called the self-dual point. In the case of our planar Gaussian model, self-duality arises at the parameter $p=0$ (see the duality properties used in~\cite{bg_16,rv_rsw}, for instance Remark~$\duality$ of~\cite{rv_rsw}). The results on Bernoulli percolation lead us to the following conjecture:

\begin{conjecture}
For centered, normalized, non-degenerate, sufficiently smooth, stationary, isotropic, and positively correlated random fields on $\R^2$ with sufficient correlation decay, the probability that $\mathcal{D}_p$ has an unbounded connected component is $1$ if $p>0$, and $0$ otherwise.
\end{conjecture}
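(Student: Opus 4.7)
The plan is to follow the three-step strategy that the authors outline for Theorem~\ref{t.main} and to try to make each step survive at the full generality of the conjecture: (i) discretize $f$ on a lattice of mesh $\varepsilon>0$ and view the discretized field as a biased Gaussian vector; (ii) establish a sharp threshold at $p=0$ for the resulting discrete percolation model by combining the discrete box-crossing estimate (Theorem~\ref{t.epsRSW}) with the KKL-type inequality for biased Gaussian vectors (Theorem~\ref{t.KMSnonproduct}); and (iii) transfer the sharp threshold back to the continuum via the sprinkling argument (Proposition~\ref{p.sprinkling}). Uniqueness of the infinite component for $p>0$ would then follow from the standard Burton--Keane argument applied to the ergodic stationary law of $\calD_p$.

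Steps (i) and (iii) are essentially field-independent and should extend to any field satisfying the conjecture's hypotheses, provided the correlation decay is strong enough to control the error made when approximating crossing events at the continuum level by their discretizations. For step (ii), two of the three ingredients are already in place at the right generality: the RSW-type estimate of Theorem~\ref{t.RSW} applies as soon as $\kappa(x)=O(|x|^{-\alpha})$ with $\alpha>4$, and Theorem~\ref{t.KMSnonproduct} is a general statement about biased Gaussian vectors that does not see the specific structure of the Bargmann-Fock field.

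The main obstacle is the quantitative decorrelation input that is needed to convert a KKL-style bound on individual influences into a genuine Margulis--Russo sharp-threshold statement; this is precisely the step that Proposition~\ref{p.mainmoregnrl} isolates. Concretely, one has to control the covariances between pivotal events localized in well-separated regions, which for Bargmann-Fock follows essentially for free from the super-Gaussian decay of $\kappa$. For fields with only polynomial decay $|x|^{-\alpha}$, one would have to strengthen the quasi-independence estimate (Theorem~$\quasiindependence$ of~\cite{rv_rsw}) to the precise form needed and, \emph{crucially}, obtain the bound uniformly as the mesh $\varepsilon \to 0$: as the authors emphasize in Subsection~\ref{ss.discretization}, Theorem~\ref{t.KMSnonproduct} deteriorates when the discretization becomes too fine, so the sprinkling has to sit at an intermediate scale where both the KKL bound and the decorrelation estimate are still effective. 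Reconciling these two constraints in the polynomial-decay regime seems to require a genuinely new quantitative input rather than a routine extension of the Bargmann-Fock argument, and is the reason the statement is left as a conjecture.
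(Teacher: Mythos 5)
The statement you are addressing is a conjecture, and the paper itself offers no proof of it; your text, accordingly, is not a proof either but a strategy outline that ends by conceding the key step is missing. That is fine as far as it goes, but your diagnosis of \emph{which} step is missing does not match the structure of the argument you are proposing to extend. In the paper's framework, the decorrelation inputs you worry about are already available at the conjectured generality: the discrete RSW estimate (Theorem~\ref{t.epsRSW}), the quasi-independence results behind it, and the polynomial bound on influences (Proposition~\ref{p.infl_decr}) all hold for positively correlated fields with $\kappa(x)=O(|x|^{-\alpha})$, $\alpha>4$, uniformly in the mesh $\eps\in]0,1]$; and the sprinkling step (Proposition~\ref{p.sprinkling}) needs only a.s.\ $C^2$ regularity, not super-Gaussian decay. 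The genuine bottleneck, isolated in Proposition~\ref{p.mainmoregnrl}, is the quantity $\|\sqrt{K^\eps}\|_{\infty,op}$ appearing in the KKL-type bound of Theorem~\ref{t.KMSnonproduct}: one needs a square root of the discretized covariance with $\|\sqrt{K^\eps}\|_{\infty,op}\leq C\eps^{-(2-\delta)}$ for some $\delta>0$, and the paper obtains the required estimate (of order $\eps^{-1}\log(1/\eps)$, via Proposition~\ref{p.square_root_estimate}) only for the Bargmann-Fock kernel, through explicit Fourier computations on $\hat\kappa$. So the tension is not between a KKL bound and a pivotal-decorrelation estimate as the mesh shrinks; it is between the growth of $\|\sqrt{K^\eps}\|_{\infty,op}$ as $\eps\downarrow 0$ (entering Lemma~\ref{l.dg_R/dp}) and the sprinkling constraint, which is resolved for Bargmann-Fock by the choice $\eps=\log^{-1/3}(R)$.

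Two smaller points. First, your appeal to Burton--Keane for uniqueness is unnecessary and introduces an ergodicity hypothesis you have not justified in this generality; the paper gets uniqueness directly from the circuit construction inside Lemma~\ref{l.criterion} via Borel--Cantelli, which is the natural route once the box-crossing criterion is established. Second, the only place in the paper where super-exponential decay of $\kappa$ is truly used is the exponential-decay statement (Theorem~\ref{t.exp}), not the phase-transition result itself; attributing the restriction to Bargmann-Fock to ``decorrelation of pivotal events'' therefore misplaces the difficulty. If you want to turn your outline into progress on the conjecture, the concrete open problem is to bound $\|\sqrt{K^\eps}\|_{\infty,op}$ (for instance through the quantity $\Upsilon(\eps)$ of Lemma~\ref{l.sqrt.2}) for kernels with only polynomial decay.
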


Of course, our Theorem~\ref{t.main} is an answer to the above conjecture for a particular model.  We also have the following analog analog for Theorem \ref{t.ber_exp} that we prove in Subsection~\ref{ss.exp}.

\begin{theorem}\label{t.exp}
Consider the Bargmann-Fock field and let $p>0$. Then, for each $\rho>0$ there exists a constant $c=c(p,\rho)>0$ such that for each $R>0$, the probability that there is a continuous path in $\mathcal{D}_p \cap [0,\rho R]\times[0,R]$ joining the left side of $[0,\rho R]\times[0,R]$ to its right side is at least $1-e^{-cR}$.
\end{theorem}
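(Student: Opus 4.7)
The plan is to deduce Theorem~\ref{t.exp} from the main Theorem~\ref{t.main} via a block-renormalization argument combined with a sprinkling, taking full advantage of the super-exponential decay $\kappa(x)=\exp(-|x|^2/2)$ of the Bargmann-Fock covariance. In the same way that in Bernoulli percolation, once supercritical crossing probabilities approach $1$ one upgrades to exponential decay by chaining nearly independent crossings, we will chain ``good'' Gaussian crossings at a slightly lower level and use the sprinkling to absorb the resulting coupling errors.

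First I would extract from Theorem~\ref{t.main} the quantitative statement that, for any fixed $p' \in (0,p)$ and $\rho > 0$, the probability that $\calD_{p'}$ contains a horizontal crossing of $[0,\rho R]\times[0,R]$ tends to $1$ as $R\to\infty$: indeed the existence of an a.s.\ unique unbounded cluster, together with the FKG inequality and the RSW estimates of Theorem~\ref{t.RSW} (used to go from square crossings at level $0$ to rectangular crossings of arbitrary aspect ratio), forces the probability of crossing large rectangles at a strictly positive level to approach $1$. Thus, for any $\delta>0$, I can pick $R_0=R_0(p,\rho,\delta)$ such that each small rectangle of scale $R_0$ carries at level $p/2$ a ``good'' crossing configuration (horizontal crossing together with two vertical crossings in the left and right thirds, useful for concatenation) with probability at least $1-\delta$.

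I would then tile $[0,\rho R]\times[0,R]$ with $N\asymp R/R_0$ overlapping rectangles of scale $R_0$, in such a way that the good configurations in each tile, when they all occur, concatenate to yield a horizontal crossing of the full rectangle in $\calD_{p/2}$. To turn the conjunction of good events into exponential decay, I would apply the sprinkling Proposition~\ref{p.sprinkling} together with the super-exponential decay of $\kappa$: splitting $f=f_1+f_2$ into independent Gaussian contributions, the restriction of $f_1$ to tiles separated by distance $D$ couples with a truly independent family up to a total-variation error of order $e^{-D^2/C}$. Taking $D$ a slowly growing function of $R$ makes this coupling error negligible, and the small sprinkling provided by $f_2$ restores the crossings to the full level $p$. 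Splitting the tiles into $O(1)$ groups of pairwise-disjoint rectangles, each group becomes a near-independent Bernoulli product at density $1-\delta$, and a large-deviation estimate gives $\Pro[\neg A_R^p]\leq e^{-c'R}$.

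The most delicate step will be the decoupling: the sprinkling provides only a fixed amount of ``slack'' $p/2$ in the level, so one must ensure the coupling error decays fast enough to be absorbed by this slack uniformly over scales up to $R$. The super-exponential decay of the Bargmann-Fock kernel makes this feasible since even mildly separated tiles are essentially independent; for the more general fields of Proposition~\ref{p.mainmoregnrl} (with only polynomial correlation decay) this step would require substantially more work, which explains why the exponential decay is stated only in the Bargmann-Fock case.
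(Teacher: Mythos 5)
Your overall architecture (fixed-scale block renormalization, decoupling of distant tiles, sprinkling to absorb errors) is a genuinely different route from the paper's, which instead runs a scale-by-scale bootstrap: setting $r_{k+1}=2r_k+\sqrt{r_k}$, the paper bounds $1-\Pro[\cross_p(2r_{k+1},r_{k+1})]$ by the probability that two vertically translated gluing events $\fivecross_p(k)$ and $\fivecross_p'(k)$, separated by a gap $\sqrt{r_k}$, both fail; the quasi-independence result of \cite{rv_rsw} (Claim~\ref{cl.quasi_ind}, with error $O(r_k^4)e^{-r_k/2}$, which is where the super-exponential decay of the Bargmann--Fock kernel enters) then yields the recursion $a_{k+1}\leq 49\,a_k^2$ of \eqref{e.mainpf.2}, seeded by the fact \eqref{e.cross.decay} that $\Pro[\cross_p(2R,R)]\to 1$, and this recursion alone gives the exponential rate. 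Your plan could in principle be made to work, but as written it has two genuine gaps. First, the concluding step does not close: if the good configurations must occur in \emph{all} $N\asymp R/R_0$ tiles in order to concatenate, a union bound only gives $1-N\delta$, and a large-deviation bound on the number of bad tiles says nothing about the existence of a crossing under that concatenation scheme. What is needed is the standard renormalized Peierls/duality step: define the good event so that a left-right path of good blocks in the coarse lattice already produces a continuum crossing, dominate the good-block field by highly supercritical block Bernoulli percolation, and bound the probability of a dual blocking path of bad blocks by $\sum_{k}C^k\delta^{ck}$. Without this (or an equivalent geometric argument such as the paper's two-overlapping-strips recursion), the bound $1-e^{-cR}$ does not follow from what you wrote.

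Second, the decoupling is attributed to the wrong tool. Proposition~\ref{p.sprinkling} provides no decomposition $f=f_1+f_2$ and no coupling of the restrictions of $f$ to distant tiles with an independent family; it only compares the discrete crossing event at level $p/2$ with the continuum crossing at level $p$ on a single rectangle. The total-variation estimate you assert for tiles at distance $D$ is not in the paper and is not immediate; to make your scheme rigorous you would need either the Gaussian comparison/quasi-independence inequality of \cite{rv_rsw} (which is exactly what Claim~\ref{cl.quasi_ind} supplies, and which works between a bounded number of events per scale rather than for a whole tiling), or an explicit construction such as writing $f=\rho * W$ with $\rho$ the Gaussian kernel, truncating $\rho$ at range $D/2$ so that $f_1$ is exactly independent across tiles at distance $D$, and controlling $\sup_{[0,\rho R]\times[0,R]}|f_2|\leq p/2$ via the BTIS inequality so that the sprinkling absorbs the truncation; none of this is carried out. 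A minor further point: deducing that $\Pro[\cross_{p'}(\rho R,R)]\to 1$ for $p'>0$ from the mere statement of Theorem~\ref{t.main} together with FKG and Theorem~\ref{t.RSW} is itself not straightforward (RSW at level $0$ only gives a constant lower bound); the paper obtains this as a quantitative byproduct of the proof of Theorem~\ref{t.main}, namely \eqref{e.cross.decay}, which is what you should cite as the seed of the argument.
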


Contrary to the other results of this paper, the fact that the correlation function of the Bargmann-Fock field decreases more than exponentially fast is crucial to prove Theorem~\ref{t.exp}.

\paragraph{Organization of the paper.} The paper is organized as follows:
\bi 
\item In Section~\ref{s.strategy}, we explain the general strategy of the proof of our phase transition result Theorem~\ref{t.main}. In particular, we explain the discretization we use and we state the main intermediate results including the KKL-type inequality for biased Gaussian vectors mentioned above.
\item In Section~\ref{s.mainpf}, we combine all these intermediate results to prove Theorems~\ref{t.main} and~\ref{t.epsRSW}.
\item In Sections~\ref{s.perco} to~\ref{s.sprinkling}, we prove these intermediate results.
\ei

\paragraph{Related works.} As explained above, the present article is in the continuity of~\cite{bg_16} where the authors somewhat initiate the study of a rigorous connection between percolation theory and the behaviour of nodal lines. In~\cite{bm_17}, the authors optimize the results from~\cite{bg_16} and the authors of the present paper optimize them further in~\cite{rv_rsw}. See also~\cite{bmw_17} where the authors prove a box-crossing estimate for Gaussian fields on the sphere or the torus by adapting the strategy of~\cite{bg_16}. In~\cite{bg_16,bm_17,bmw_17,rv_rsw} (while the approaches differ in some key places), the initial idea is the same, namely to use Tassion's general method to prove box-crossing estimates, which goes back to~\cite{tassion2014crossing}. To apply such a method, we need to have in particular a positive association property and a quasi-independence property. In~\cite{rv_rsw}, we have proved such a quasi-independence property for planar Gaussian fields that we will also use in the present paper, see Claim~\ref{cl.quasi_ind}. We will also rely to other results from~\cite{rv_rsw}, in particular a discrete RSW estimate. As we will explain in Subsection~\ref{ss.discretization}, we could have rather referred to the slightly weaker analogous results from~\cite{bg_16}, which would have been sufficient in order to prove our main result Theorem~\ref{t.main}.\\

The use of a KKL theorem to prove our main result Theorem~\ref{t.main} shows that our work falls within the approach of recent proofs of phase transition that mix tools from percolation theory and tools from the theory of Boolean functions. See~\cite{garban2014noise} for a book about how these theories can combine. Below, we list some related works in this spirit.

\bi 
\item During the elaboration of the present work, Duminil-Copin, Raoufi and Tassion have developed novel techniques based on the theory of randomized algorithms and have proved new sharp threshold results, see~\cite{duminil2017sharp,duminil2017exponential}. This method has proved robust enough to work in a variety of settings: discrete and continuous (the Ising model and Voronoi percolation), with dependent models (such as FK-percolation) and in any dimension. It seems worthwhile to note that the present model resists direct application. At present, we see at least two obstacles: first of all the influences that arise in our setting are not the same as those of~\cite{duminil2017sharp,duminil2017exponential} (more precisely, the influences studied by Duminil-Copin, Raoufi and Tassion can be expressed as covariances while ours cannot exactly, see Remark~\ref{r.infl_with_cond}). Secondly, right now it is not obvious for us whether or not our measures are monotonic.

\item Another related work whose strategy is closer to the present paper is~\cite{rodriguez20150}, where the author studies similar questions for the $d$-dimensional discrete (massive) Gaussian free field. Some elements of said work apply to general Gaussian fields. More precisely, following the proof of  Proposition 2.2 of~\cite{rodriguez20150}, one can express the derivative probability with respect to the threshold $p$ as a sum of covariances, which seems promising, especially in view of~\cite{duminil2017sharp,duminil2017exponential}. However, each covariance is weighted with a sum of coefficients of the inverse covariance matrix of the discretized field and at present we do not know how to deal with these sums. In~\cite{rodriguez20150}, these coefficients are very simple because we are dealing with the Gaussian free field.

\item The idea to use a KKL inequality to compute the critical point of a planar percolation model comes from~\cite{bollobas2006short,bollobas2006sharp,bollobas2006critical}. See also\cite{beffara2012self,duminil2016new} where such an inequality is used to study FK percolation. In~\cite{beffara2012self,rodriguez20150,duminil2016new}, the authors use a KKL inequality for monotonic measures proved by Graham and Grimmett~\cite{graham2006influence,graham2011sharp}. The same obstacles as in Item~1 above prevented us to use this KKL inequality.

\ei

\paragraph{A note on vocabulary.} We end the first section by a remark on vocabulary on positive definite matrices and functions.

\begin{remark}\label{r.pos_def}
In all the paper, we are going to deal with positive definite functions and matrices. The convention seems to be that, on the one hand positive definite matrices are invertible whereas semi-positive definite matrices are not necessarily. On the other hand, a function $g$ is said strictly positive definite if for any $n \in \Z_{>0}$ and any $x_1, \cdots, x_n$, $(g(x_i-x_j))_{1 \leq i,j \leq n}$ is a positive definite matrix while $g$ is said positive definite if the matrices $(g(x_i-x_j))_{1 \leq i,j \leq n}$ are semi-positive definite. We will follow these conventions and hope this remark will clear up any ambiguities.
\end{remark}

\paragraph{Acknowledgments:}$ $\\
We are grateful to Christophe Garban and Damien Gayet for many fruitful discussions and for their advice in the organization of the manuscript. We would also like to thank Vincent Beffara for his helpful comments. Moreover, we are thankful to Thomas Letendre for pointing out useful references about Gaussian fields and for his help regarding Fourier techniques. Finally, we would like to thank the anonymous referees for their careful reading of the manuscript and their helpful suggestions.

\section{Proof strategy and intermediate results}\label{s.strategy}

In this section, we explain the global strategy of the proof of our main result Theorem~\ref{t.main}. Since the case $p \leq 0$ is already known (see the beginning of Section~\ref{s.main_results}), we focus to the case $p>0$. We first discuss briefly some aspects of the analogous result for Bernoulli percolation: Kesten's theorem. Next, we give an informal explanation of our proof and state the main intermediate results. More precisely, we explain the discretization procedure used in our proof in Subsection~\ref{ss.discretization}. Then, we state the main intermediate results at the discrete level in Subsection~\ref{ss.discrete}, and in Subsection~\ref{ss.from_discrete_to_cont} we explain how to go from the discrete to the continuum.

\subsection{Some ingredients for the proof of Kesten's theorem}\label{ss.kesten}

Several proofs of Kesten's theorem (Theorem~\ref{t.kesten}) are known (see~\cite{grimmett2010probability,bollobas2006percolation}). Let us fix a parameter $p\in[0,1]$ and consider Bernoulli percolation on $\Z^2$ with parameter $p$. As explained before, we are mainly interested in the proof of existence of an unbounded component, i.e. when $p>1/2$. One possible proof of Kesten's theorem uses the following ingredients, that we will try to adapt to our setting. See for instance Section~3.4 of~\cite{bollobas2006percolation}, Section~5.8 of~\cite{grimmett1999percolation} or Section~3.4 of~\cite{garban2014noise} where it is explained how one can combine these ingredients.
\bi 
\item \textit{A box-crossing criterion:} For all $\rho_1,\rho_2>0$, let $\cross^{\text{perco}}_p(\rho_1,\rho_2)$ denote the event that there is a continuous path of black edges that crosses the rectangle $[0,\rho_1] \times [0,\rho_2]$ from left to right, and assume that:
\[
\sum_{k \in \N} \Pro \left[ \neg\cross^{\text{perco}}_p(2^{k+1},2^k) \right] < +\infty \, .
\]
Then, a.s. there exists an infinite black component.
\item \textit{The RSW (Russo-Seymour-Welsh) theorem}, 
which implies that, for every $\rho > 0$, there exists a constant $c=c(\rho) > 0$ such that, for every $R > 0$, $\Pro \left[ \cross^{\text{perco}}_{1/2}(\rho R,R) \right] \geq c(\rho)$.
\item \textit{The FKG-Harris inequality} (see~\cite{grimmett1999percolation,bollobas2006percolation}) that says that increasing events are positively correlated.
\item \textit{Russo's differential formula} (\cite{grimmett1999percolation,bollobas2006percolation}): Let $n \in \N_+$, $\Pro_p^n := \left( p\delta_1+(1-p)\delta_{0} \right)^{\otimes n}$ and $A \subseteq \lbrace 0,1 \rbrace^n$. For every $i \in \lbrace 1, \cdots, n \rbrace$, let $\Infl_i^p(A)$ denote the influence of $i$ on $A$ at the parameter $p$, which is the probability that changing the value of the coordinate $i$ modifies $\un_A$. If $A$ is increasing, then we have the following differential formula:
\[
\frac{d}{dp} \Pro_p^n \left[ A \right] = \sum_{i=1}^n \Infl_i^p(A) \, .
\]
\item \textit{A KKL (Kahn-Kalai-Linial) theorem} (see for instance Theorems~1.16 and~3.4 of~\cite{garban2014noise}): The sum of influences can be estimated thanks to the celebrated KKL theorem. Here, we present the version of the KKL theorem that implies that, if all the influences are small, then the sum of the influences is large. A qualitative version of this principle was proved by Russo in~\cite{russo1982approximate}. The KKL theorem, proved in~\cite{kahn1988influence} for $p=1/2$ and generalized in~\cite{bourgain1992influence,talagrand1994russo} to every $p$, is a quantitative version of~\cite{russo1982approximate}. Let $\Pro_p^n$ be as above. There exists an absolute constant $c > 0$ such that, for every $p \in [0,1]$, every $n \in \N_+$ and every $A \subseteq \lbrace 0,1 \rbrace^n$, we have:
\[
\sum_{i=1}^n \Infl_i^p(A) \geq c \, \Pro_p^n \left[ A \right] \cdot (1-\Pro_p^n \left[ A \right]) \cdot \log \left( \frac{1}{\max_i \Infl_i^p(A)} \right) \, .
\]
The idea to use a KKL theorem to prove Kesten's theorem comes from~\cite{bollobas2006short}.
\ei


Our global strategy to prove Theorem~\ref{t.main} will be based on similar ingredients and on a discretization procedure used in~\cite{bg_16,bm_17} (together with a sprinkling argument). Some of these ingredients are already known, the others will be proved in our paper. We list them in the remaining subsections of Section~\ref{s.strategy}, and in Section~\ref{ss.mainpf} we will explain how we can combine all these ingredients to prove Theorem~\ref{t.main}.\\

Since most of our intermediate results work in a much wider setting, we first state the conditions on the planar Gaussian field $f$ under which we work.

\subsection{Conditions on the Gaussian fields}\label{ss.cond}

First, we state Condition~\ref{a.super-std} \textbf{that we will assume during all the work}:
\begin{condition}\label{a.super-std}
The field $f$ is non-degenerate (i.e. for any pairwise distinct $x_1,\dots,x_k\in\R^2$, the covariance matrix of $(f(x_1),\cdots,f(x_k))$ is invertible), centered, normalized (i.e. $\var(f(x))=1$ for every $x$), continuous, and stationary. In particular, there exists a strictly positive definite continuous function $\kappa \, : \, \R^2 \rightarrow [-1,1]$ such that $K(x,y) := \E \left[ f(x) f(y) \right] = \kappa(y-x)$ and $\kappa(0)=1$.
\end{condition}

Depending on the intermediate results we prove, we will also need to assume some of the following additional conditions:

\begin{condition}[Useful to apply percolation arguments.]\label{a.std}
The field $f$ is positively correlated, invariant by $\frac{\pi}{2}$-rotation, and reflection through the horizontal axis.
\end{condition}

\begin{condition}[Useful to have quasi-indepence.] Depends on a parameter $\alpha>0$.]\label{a.pol_decay}
There exists $C<+\infty$ such that for each $x\in\R^2$, $|\kappa(x)|\leq C|x|^{-\alpha}$.
\end{condition}

\begin{condition}[Technical conditions to have quasi-independence, see~\cite{rv_rsw}.]\label{a.decay} The function $\kappa$ is $C^8$ and for each $\beta\in\N^2$ with $\beta_1+\beta_2\leq 2$, $\partial^\beta\kappa(x)\underset{|x|\rightarrow +\infty}{\longrightarrow}0$.
\end{condition}

\begin{condition}[Useful to do Fourier calculations on the correlation function. Depends on a parameter $\alpha>0$.]\label{a.fourier}
The Fourier transform of $\kappa$ takes only positive values. Moreover, $\kappa$ is $C^3$ and there exists $C<+\infty$ such that for every $\beta \in \N^2$ with $\beta_1 + \beta_2 \leq 3$, we have:
\[
|\partial^\beta \kappa(x)| \leq C |x|^{-\alpha} \, .
\]
\end{condition}

We will often suppose regularity conditions on $f$ and $\kappa$. It will be interesting to have the following in mind (see for instance Appendices A.3 and A.9 of~\cite{nazarov2015asymptotic}):
\begin{lem}\label{l.reg}
Assume that $f$ satisfies Condition~\ref{a.super-std}. Let $k \in \N$. If $\kappa$ is $C^{2(k+1)}$, then a.s. $f$ is $C^k$. Conversely, if a.s. $f$ is $C^k$, then $\kappa$ is $C^{2k}$ and for every multi-indices $\beta,\gamma \in \N^2$ such that $\beta_1+\beta_2, \leq k$ and $\gamma_1+\gamma_2 \leq k$, we have:
\[
\cov \left( \partial^\beta f(x), \partial^\gamma f(y) \right) = \E \left[ \partial^\beta f(x) \partial^\gamma f(y) \right] = (-1)^{|\gamma|} \partial^{\beta+\gamma}\kappa (x-y) \, .
\]
\end{lem}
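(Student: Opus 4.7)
The plan is to treat the two directions separately, with both arguments ultimately reducing to the fact that for a tight family of centered Gaussian variables, almost-sure convergence, convergence in probability, and convergence in $L^2$ all coincide. This lets one pass to the limit freely both on sample paths and on covariances.

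For the forward direction, assume $\kappa\in C^{2(k+1)}$. The first step is to introduce, for each multi-index $\beta$ with $|\beta|\leq k+1$, a centered Gaussian process $g_\beta$ constructed as the $L^2$-limit of iterated finite-difference quotients of $f$; a direct covariance computation shows these quotients are Cauchy in $L^2$ precisely because $\kappa$ has $2|\beta|$ continuous derivatives, and the limit has covariance $(-1)^{|\gamma|}\partial^{\beta+\gamma}\kappa(x-y)$. For $|\beta|\leq k$, the variogram
\[
\E\bigl[(g_\beta(x)-g_\beta(y))^2\bigr] \;=\; 2(-1)^{|\beta|}\bigl(\partial^{2\beta}\kappa(0)-\partial^{2\beta}\kappa(x-y)\bigr)
\]
is $O(|x-y|^2)$ by Taylor expanding $\partial^{2\beta}\kappa$ to second order around the origin---this uses the full $C^{2(k+1)}$ regularity. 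Gaussianity promotes this $L^2$ bound to a bound on every moment via Wick's formula, and Kolmogorov's continuity criterion then yields a locally Hölder continuous modification of $g_\beta$. A uniform version of the Kolmogorov argument, applied to the error processes $h^{-1}(g_\beta(\cdot+he_i)-g_\beta)-g_{\beta+e_i}$, shows that on compact sets these modifications are a.s.\ jointly the partial derivatives of a $C^k$ modification of $f$.

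For the converse direction, assume $f$ is a.s.\ $C^k$. Fix $x\in\R^2$ and a coordinate direction $e_i$; the difference quotient $D_h^i f(x):=(f(x+he_i)-f(x))/h$ is centered Gaussian and converges almost surely to $\partial_i f(x)$. Since a centered Gaussian family that converges in probability is automatically bounded in every $L^p$ and converges in $L^2$, one obtains
\[
\var\bigl(D_h^i f(x)\bigr)\;=\;\frac{2\bigl(\kappa(0)-\kappa(he_i)\bigr)}{h^2}\;\xrightarrow[h\to 0]{}\;\var(\partial_i f(x))\,,
\]
and an identical argument applied to $\cov(D_h^i f(x),D_{h'}^j f(y))$ gives existence of $\partial^{e_i+e_j}\kappa$ together with the claimed identity when $k=1$. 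Iterating this scheme up to order $k$---at each step converting $L^2$-convergence of a difference quotient of $\partial^\alpha f$ into the existence of one more derivative of $\kappa$---produces $\kappa\in C^{2k}$ and the full covariance formula $\cov(\partial^\beta f(x),\partial^\gamma f(y))=(-1)^{|\gamma|}\partial^{\beta+\gamma}\kappa(x-y)$.

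The main obstacle, in the forward direction, is the compatibility step: Kolmogorov's criterion only produces continuous \emph{modifications} of each $g_\beta$, and one must ensure that the various modifications (for the different $\beta$) jointly realize the differential structure of a single $C^k$ sample of $f$. The cleanest remedy is to apply Kolmogorov simultaneously to the vector-valued process $(g_\beta)_{|\beta|\leq k+1}$ together with all the difference-quotient error terms, so that the entire differential calculus holds on one event of probability one. In the converse direction the analogous subtlety is purely algebraic, namely verifying that $L^2$-limits of difference quotients of $\kappa$ coincide with genuine pointwise partial derivatives; this is exactly where Gaussianity is used, via the a.s.-to-$L^2$ upgrade for centered Gaussians.
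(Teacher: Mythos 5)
The paper does not actually prove Lemma~\ref{l.reg}: it is stated as a known fact with a pointer to Appendices A.3 and A.9 of \cite{nazarov2015asymptotic}, and your argument is precisely that standard proof — formal $L^2$-derivatives plus Kolmogorov's criterion plus identification of the continuous modifications with genuine partial derivatives in one direction, and the Gaussian upgrade from a.s.\ to $L^2$ convergence of difference quotients combined with the classical second-difference-quotient lemma in the other. The sketch is correct in structure; the only places where real work remains are the ones you already flag (the joint identification of the modifications $g_\beta$ as derivatives of a single $C^k$ version of $f$, and the deterministic fact that convergence of the double difference quotients of $\kappa$ to a continuous limit yields genuine $C^{2k}$ regularity — note that this last step is real analysis, not the place where Gaussianity is used).
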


It is easy to check that the conditions are all satisfied by the Bargmann-Fock field (see Lemma~\ref{l.Bochnereasy_strict} for the non-degeneracy condition). In particular, Conditions~\ref{a.decay} and~\ref{a.fourier} hold for any $\alpha > 0$. Also, the Bargmann-Fock field is a.s. analytic and its covariance is analytic.\\

An other example of Gaussian fields in the plane that satisfy the above conditions (with the parameter $\alpha$ which depends on the parameter $n$) is the field with correlation function:
\begin{equation}\label{e.1over1+xsquared}
\kappa \, : \, x=(x_1,x_2) \in \R^2  \mapsto \left( \frac{1}{(1+x_1^2)(1+x_2^2)} \right)^n \, ,
\end{equation}
where $n \in \Z_{>0}$. This is indeed a strictly positive definite function by the following lemma.

\begin{lem}\label{l.Bochnereasy_strict}
The Fourier transform of a continuous and integrable function $\R^2 \rightarrow \R_+$ which is not $0$ is strictly positive definite. In particular, the Gaussian function $x \mapsto \exp(-\frac{1}{2}|x|^2)$ and the function~\eqref{e.1over1+xsquared} (for any $n \in \Z_{>0}$) are strictly positive definite.
\end{lem}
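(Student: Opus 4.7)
The plan is to prove the general statement via the standard Bochner-type quadratic form identity and then check that the two explicit examples fit the framework. Let $g : \R^2 \to \R_+$ be continuous, integrable, and not identically zero, and let $\hat g$ denote its Fourier transform. Fix pairwise distinct $x_1, \ldots, x_n \in \R^2$ and scalars $\xi_1, \ldots, \xi_n \in \C$ not all zero. First I would exchange sum and integral (legitimate by absolute integrability, since $g \in L^1$) to obtain the identity
\begin{equation*}
\sum_{j,k=1}^n \xi_j \overline{\xi_k}\, \hat g(x_j - x_k) \;=\; \int_{\R^2} g(t)\, |P(t)|^2\, dt, \qquad P(t) := \sum_{j=1}^n \xi_j\, e^{-i t \cdot x_j}.
\end{equation*}
The whole proof reduces to showing that the right-hand side is strictly positive.

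For this, since $g$ is continuous, nonnegative and not identically zero, it is strictly positive on some nonempty open ball $B \subset \R^2$. The key step is to show that $P$ does not vanish almost everywhere on $B$. Since $P$ is real-analytic on $\R^2$ (even entire), if it vanished on $B$ it would vanish everywhere. To rule out $P \equiv 0$, I would choose a direction $v \in \R^2$ such that the scalars $v \cdot x_1, \ldots, v \cdot x_n$ are pairwise distinct (this only excludes finitely many lines since the $x_j$ are distinct), and restrict $P$ to the line $s \mapsto s v$; the resulting function $s \mapsto \sum_j \xi_j e^{-i s (v \cdot x_j)}$ is a linear combination of characters with distinct frequencies, and the classical Vandermonde/linear-independence-of-characters argument forces all $\xi_j = 0$, a contradiction. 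Hence $|P|^2 > 0$ on a subset of $B$ of positive measure, so the integral is strictly positive. This is the step I expect to be the most delicate, though it is essentially textbook once stated carefully.

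For the two examples, it suffices to write each as (a positive multiple of) the Fourier transform of a nonnegative, continuous, integrable function. The Gaussian $x \mapsto e^{-|x|^2/2}$ is (up to a positive constant) its own Fourier transform, so it is immediately of the required form. For $\kappa(x) = ((1+x_1^2)(1+x_2^2))^{-n}$ with $n \geq 1$, the tensor-product structure reduces the matter to one dimension: I would use $\mathcal{F}[(1+u^2)^{-1}](\xi) = \pi e^{-|\xi|}$ and the fact that $(1+u^2)^{-n}$ is a product of $n$ copies of $(1+u^2)^{-1}$, so its Fourier transform is a positive multiple of the $n$-fold convolution of $e^{-|\cdot|}$ with itself, which is a continuous, integrable, strictly positive function on $\R$. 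Taking the tensor product of the two coordinates, $\kappa$ becomes the Fourier transform of a continuous, integrable, strictly positive function on $\R^2$, so the general statement applies and we conclude that $\kappa$ is strictly positive definite.
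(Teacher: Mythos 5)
Your proposal is correct, but it takes a different route from the paper. The paper disposes of the general statement by citing it directly: it invokes the strictly positive definite version of the ``easy part'' of Bochner's theorem (Theorem~3 of Chapter~13 of Cheney's book), and then only checks that the two examples are Fourier transforms of admissible functions -- the Gaussian being (up to a constant) its own transform, and $\left((1+x_1^2)(1+x_2^2)\right)^{-n}$ being the transform of the $n$-fold convolution of $\xi \mapsto cst\, e^{-(|\xi_1|+|\xi_2|)}$, citing Rudin for the convolution identity. You instead reprove the general statement from scratch: the identity $\sum_{j,k}\xi_j\overline{\xi_k}\,\hat g(x_j-x_k)= cst \int g(t)|P(t)|^2\,dt$ (you drop the positive normalization constant coming from the Fourier convention, which is harmless), followed by strict positivity via continuity of $g$ on a ball where it is positive, real-analyticity of the trigonometric polynomial $P$, and linear independence of characters along a generic direction where the frequencies $v\cdot x_j$ are pairwise distinct. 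All of these steps are sound, and your treatment of the two examples matches the paper's (same Gaussian self-duality, same one-dimensional Fourier transform of $(1+u^2)^{-1}$ and $n$-fold convolution, organized through the tensor-product structure). What your version buys is a self-contained argument that makes transparent exactly where strictness comes from; what the paper's version buys is brevity, at the cost of outsourcing the key non-degeneracy mechanism to a reference.
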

\begin{proof}
This is a direct consequence of Theorem~3 of Chapter~13 of~\cite{cheney2009course} (which is the strictly positive definite version of the easy part of Bochner theorem). We can apply this to the Bargmann-Fock field and to~\eqref{e.1over1+xsquared} since the Fourier transform of a Gaussian function is still a Gaussian function and since the Fourier transform of $x \mapsto \frac{1}{(1+x_1^2)(1+x_2^2)}$ is $\xi \mapsto cst \, \exp(-(|\xi_1|+|\xi_2|))$, hence $x \in \R^2  \mapsto \left( \frac{1}{(1+x_1^2)(1+x_2^2)} \right)^n$ is the Fourier transform of the function $\xi \mapsto cst \, \exp(-(|\xi_1|+|\xi_2|))$ convoluted $n$ times, see Paragraph~1.2.3 of~\cite{rud_fou}.
\end{proof}

If one wants to consider a large family of examples of planar Gaussian fields, one can consider a function $\rho \, : \, \R^2 \rightarrow \R_+$ sufficiently smooth, that is not $0$ and such that $\rho$ and its derivatives decay sufficiently fast. One can note that $\kappa = \rho*\rho$ has the same properties and is strictly positive definite by Lemma~\ref{l.Bochnereasy_strict} since $\hat{\kappa}=(\hat{\rho})^2$. Moreover, if $\rho$ has sufficiently many symmetries, then the Gaussian field with covariance $\kappa$ above satisfies Conditions from~\ref{a.super-std} to~\ref{a.fourier}. Now, if $W$ is a two-dimensional white noise, that is, the free field associated to the Hilbert space $L^2(\R^2)$ (see Definition 2.5 of \cite{shef_gff}), then (if $\rho$ is even)
\[
f = \rho*W
\]
defines a Gaussian field on $\R^2$ with covariance $\kappa$.\\

We now list the main intermediate results of our proof.

\subsection{A box-crossing criterion}\label{ss.box-cross}

As in the strategy of the proof of Kesten's theorem presented in Subsection~\ref{ss.kesten}, our goal will be to prove a box-crossing criterion. We start by introducing the following notation.
\begin{notation}\label{n.cross}
For each $\rho_1,\rho_2>0$ and each $p\in\R$, we write $\cross_p(\rho_1,\rho_2)$ for the event that there is a continuous path in $\calD_p\cap [0,\rho_1]\times[0,\rho_2]$ joining the left side of $[0,\rho_1]\times[0,\rho_2]$ to its right side.
\end{notation}
In Subsection \ref{ss.criterion}, we will prove the following proposition.
\begin{lem}\label{l.criterion}
Assume that $f$ satisfies Condition~\ref{a.super-std} and that $\kappa$ is invariant by $\frac{\pi}{2}$-rotations. Let $p>0$ and assume that:
\begin{equation}\label{e.criterion}
\sum_{k \in \N} \Pro \left[ \neg\cross_p(2^{k+1},2^k) \right] < +\infty \, .
\end{equation}
Then, a.s. there exists a unique unbounded component in $\calD_p$.
\end{lem}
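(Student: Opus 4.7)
The plan is to combine Borel--Cantelli applied to~\eqref{e.criterion} with planar topology. For each $k \in \N$, let $\calH_k$ denote the event of a horizontal crossing of the rectangle $R_k := [0, 2^{k+1}] \times [0, 2^k]$ in $\calD_p$, and $\calV_k$ the event of a vertical crossing of $R_k' := [2^k, 2^{k+1}] \times [0, 2^{k+1}]$. By stationarity of $f$ and $\pi/2$-rotation invariance of $\kappa$ (which transfers to the law of $f$), one has $\Pro[\neg \calH_k] = \Pro[\neg \calV_k] = \Pro[\neg \cross_p(2^{k+1}, 2^k)]$, so~\eqref{e.criterion} together with Borel--Cantelli yields that, a.s., both $\calH_k$ and $\calV_k$ hold for every $k \geq K_0(\omega)$, with $K_0 < \infty$. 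From this I will build an unbounded connected component of $\calD_p$ (giving existence) and closed loops around the origin in $\calD_p$ (giving uniqueness).

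For existence, the overlap $S_k := R_k \cap R_k' = [2^k, 2^{k+1}] \times [0, 2^k]$ is a square inside which the horizontal crossing of $R_k$ runs from the left side $\{2^k\} \times [0,2^k]$ to the right side $\{2^{k+1}\} \times [0,2^k]$, while the vertical crossing of $R_k'$ runs from the bottom $[2^k,2^{k+1}]\times\{0\}$ to the top $[2^k,2^{k+1}]\times\{2^k\}$ of $S_k$ --- indeed, the latter path cannot exit $S_k$ through its left or right sides without leaving $R_k'$. A standard Jordan-curve argument in $S_k$ then forces these two paths to meet. Similarly, the horizontal crossing of $R_{k+1}$ must traverse the strip $R_k'$ horizontally and hence meet the vertical crossing of $R_k'$ inside $R_k'$. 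Inductively, all these crossings for $k \geq K_0$ belong to a single connected component of $\calD_p$, and this component is unbounded because the horizontal crossing of $R_k$ reaches $x = 2^{k+1} \to \infty$.

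For uniqueness, I would strengthen the construction to produce, a.s. for every $k \geq K_1$, a connected subset $\calC_k \subset \calD_p$ surrounding the origin within the annulus $A_k := [-2^{k+1},2^{k+1}]^2 \setminus (-2^k,2^k)^2$. Such a $\calC_k$ is obtained by placing a uniformly bounded (in $k$) number of translated and $\pi/2$-rotated copies of $R_k$ and $R_k'$ along the four sides and four corners of $A_k$, with consecutive copies sharing a square of side $2^k$ in which a horizontal and a vertical crossing are forced to intersect exactly as above. The failure probability at scale $k$ is therefore $O\bigl(\Pro[\neg \cross_p(2^{k+1},2^k)]\bigr)$, and Borel--Cantelli again gives the existence of $\calC_k$ a.s. for all large $k$. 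Since $\calC_k$ separates the origin from infinity in $\R^2$, any unbounded component of $\calD_p$ disjoint from $\calC_k$ must lie in the unbounded component of $\R^2 \setminus \calC_k$ and hence in $\R^2 \setminus B_{2^k}$; if this held for every large $k$, such a component would be empty. Consequently every unbounded component of $\calD_p$ meets $\calC_k$ for some large $k$, and since all $\calC_k$'s lie in the unbounded component built in the previous paragraph, that component is the unique one.

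The step I expect to be most delicate is the circuit construction: the bounded number of copies of $R_k$ and $R_k'$ must be positioned so that each consecutive pair shares a $2^k$-sized square forcing the required intersection by planar topology, and this arrangement must work uniformly in $k$ around all four corners of $A_k$ while keeping the multiplicative constant in the union bound independent of $k$. Once such an arrangement is fixed, the remaining ingredients --- Jordan-curve intersections inside squares, the Borel--Cantelli application, and the topological separation argument --- are routine.
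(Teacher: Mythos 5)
Your proof follows essentially the same route as the paper's: Borel--Cantelli applied to horizontal and (by rotation/translation invariance) vertical crossings at dyadic scales, chained intersections of crossings to produce an unbounded component, and circuits in the dyadic annuli $\ann(2^k,2^{k+1})$ built from a uniformly bounded number of crossed rectangles to get uniqueness. The only point you assert without proof---that each circuit $\calC_k$ lies in the unbounded component built in the existence step---follows from the same one-line topological observation you already use elsewhere (the horizontal crossing of $R_k$ joins the inner square of $A_k$ to the outer boundary, hence must meet $\calC_k$), so your argument is at the same level of detail as, and substantively identical to, the paper's.
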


Thus, our goal turns to prove that, if $p > 0$, then $\Pro \left[ \cross_p(2R,R) \right]$ goes to $1$ as $R$ goes to $+\infty$, sufficiently fast so that the above sum is finite. In order to prove such a result, we will show a Russo-type formula and a KKL-type theorem for discrete Gaussian fields. To apply such result, we first need to discretize our model, as it was done in~\cite{bg_16}.

\subsection{A discretization procedure and a discrete phase transition theorem for more general fields}\label{ss.discretization}

We consider the following discrete percolation model: let $\mathcal{T}=(\mathcal{V},\mathcal{E})$ be the face-centered square lattice defined in Figure~\ref{f.face-centered}. Note that the exact choice of lattice is not essential in most of our arguments. We mostly use the fact that it is a periodic triangulation with nice symmetries. However, we do need a little more information to apply Theorem~\ref{t.epsRSW} (see the beginning of Section 3 of~\cite{rv_rsw}) and in Section~\ref{s.sqrt} of the present work we use the fact that the sites of $\calT$ are the vertices of a rotated and rescaled $\Z^2$-lattice. We will denote by $\calT^\eps=(\calV^\eps,\calE^\eps)$ this lattice scaled by a factor $\eps$. Given a realization of our Gaussian field $f$ and some $\eps > 0$, we color the plane as follows: For each  $x \in \R^2$, if $x \in \mathcal{V}^\eps$ and $f(x) \geq -p$ or if $x$ belongs to an edge of $\calE^\eps$ whose two extremities $y_1,y_2$ satisfy $f(y_1) \geq -p$ and $f(y_2) \geq -p$, then $x$ is colored black. Otherwise, $x$ is colored white. In other words, we study a \textbf{correlated site percolation model} on $\cal T^\eps$.

\begin{figure}[!h]
\begin{center}
\includegraphics[scale=0.45]{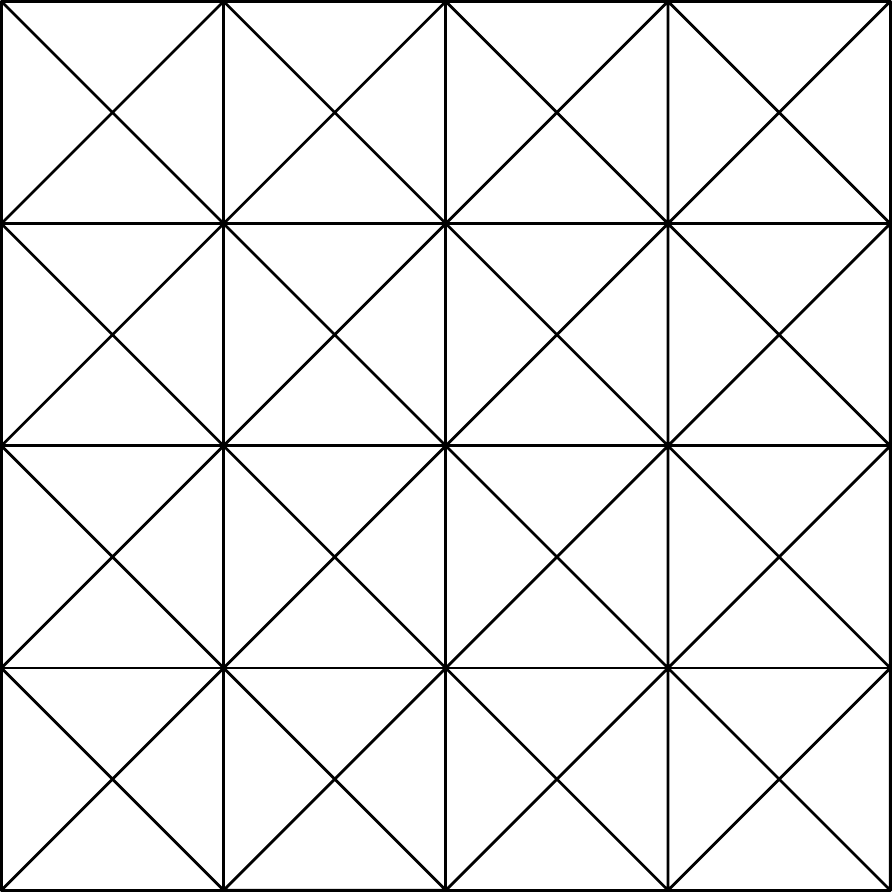}
\end{center}
\caption{The face-centered square lattice (the vertices are the points of $\Z^2$ and the centers of the squares of the $\Z^2$-lattice).}\label{f.face-centered}
\end{figure}

We will use the following notation analogous to Notation~\ref{n.cross}.
\begin{notation}\label{n.crosseps}
Let $\eps>0$, $p\in\R$, and consider the above discrete percolation model. For every $\rho_1,\rho_2 > 0$, write $\cross_p^\eps(\rho_1,\rho_2)$ for the event that there is a continuous black path in $[0,\rho_1]\times[0,\rho_2]$ joining the left side of $[0,\rho_1]\times[0,\rho_2]$ to its right side.
\end{notation}

As explained in Subsection~\ref{ss.box-cross}, we want to estimate the quantities $\Pro \left[ \cross_p(2R,R) \right]$ for $p>0$. However, the tools that we develop in our paper are suitable to study the quantities $\Pro \left[ \cross_p^\eps(2R,R) \right]$. We will thus have to choose $\eps$ so that, on the one hand, we can find nice estimates at the discrete level and, on the other hand, the discrete field is a good approximation of the continuous field. As a reading guide for the global strategy, we list below what are the constraints on $\eps$ for our intermediate results to hold. See also Figure~\ref{f.constraints}. We write them for the Bargmann-Fock field. Actually, most of the intermediate results including Item~2 below hold for more general fields, see the following subsections for more details.

\bi 
\item[1.] We want to establish a lower bound for $\Pro \left[ \cross_p^\eps(2R,R) \right]$ that goes (sufficiently fast for us) to $1$ as $R$ goes to $+\infty$. Our techniques work as long as the number of vertices is not too large (see in particular Subsection~\ref{sss.square_root}). They yield such a bound provided that there exists $\delta>0$ such that $(R,\eps)$ satisfy the condition:
\begin{equation}\label{e.constraint1}
\eps \geq \log^{-(1/2-\delta)}(R)\, .
\end{equation}
See Subsubsection~\ref{sss.combining}.

\item[2.] Then, we will have to estimate how much $\cross_p^\eps(2R,R)$ approximates well $\cross_p(2R,R)$. At this point, it seems natural to use the quantitative approximation results from~\cite{bg_16,bm_17}. In particular, as explained briefly in Subsection~\ref{ss.from_discrete_to_cont}, it seems that results from~\cite{bm_17} (based on discretization schemes that generalize the methods in~\cite{bg_16}) imply that the event $\cross_p^\eps(2R,R)$ approximates $\cross_p(2R,R)$ well if:
\begin{equation}\label{e.constraint2}
\eps \leq R^{-(1+\delta)} \, ,
\end{equation}
for some $\delta > 0$. Unfortunately, this constraint is not compatible with the constraint~\eqref{e.constraint1}. For this reason, we will instead use a sprinkling discretization procedure. More precisely, we will see in Proposition~\ref{p.sprinkling} that $\cross^\eps_{p/2}(2R,R)$ implies $\cross_p(2R,R)$ with high probability if $(R,\eps)$ satisfy the following condition:
\begin{equation}\label{e.constraint3}
\eps \leq \log^{-(1/4+\delta)}(R)
\end{equation}
for some $\delta > 0$. This time, the constraint combines very well with the constraint~\eqref{e.constraint1} and we will be able to conclude.
\ei

\begin{figure}[!h]
\begin{center}
\includegraphics[width=\textwidth]{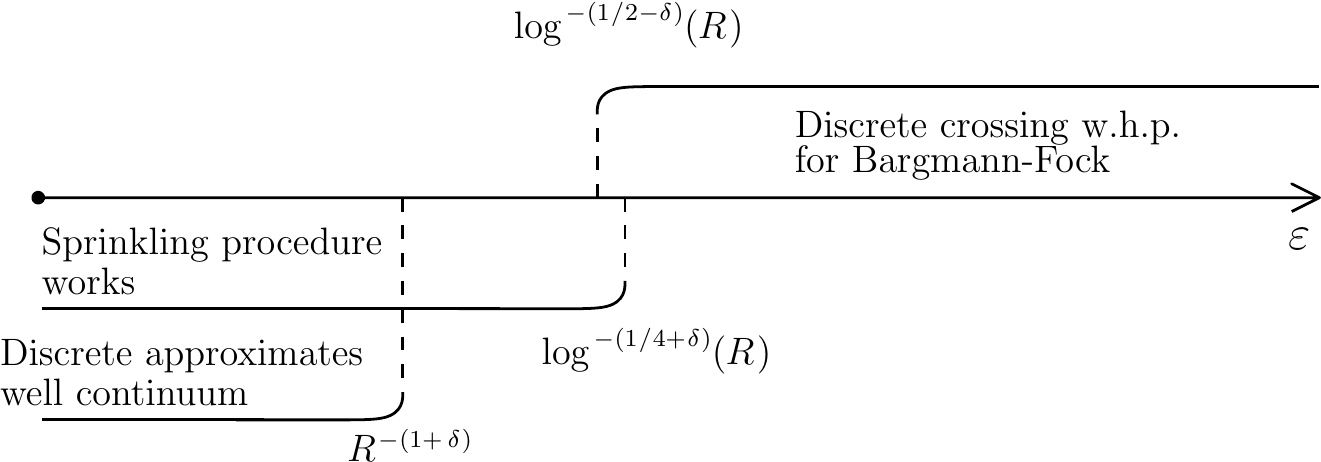}
\end{center}
\caption{The different constraints on $(R,\eps)$.}\label{f.constraints}
\end{figure}

If we work with the Bargmann-Fock field and if we choose for instance $\eps=\eps(R)=\log^{-1/3}(R)$ then, as shown in Figure~\ref{f.constraints}, we obtain that $\cross^\eps_p(2R,R)$ holds with high probability and that the sprinkling discretization procedure works. As explained in Section~\ref{ss.mainpf}, we will thus obtain that $\sum_{k \in \N} \Pro \left[ \neg\cross_p(2^{k+1},2^k) \right] < +\infty$ and conclude thanks to Lemma~\ref{l.criterion}. See also Figure~\ref{f.strat}.\\

\begin{figure}[!h]
\begin{center}
\includegraphics[scale=0.50]{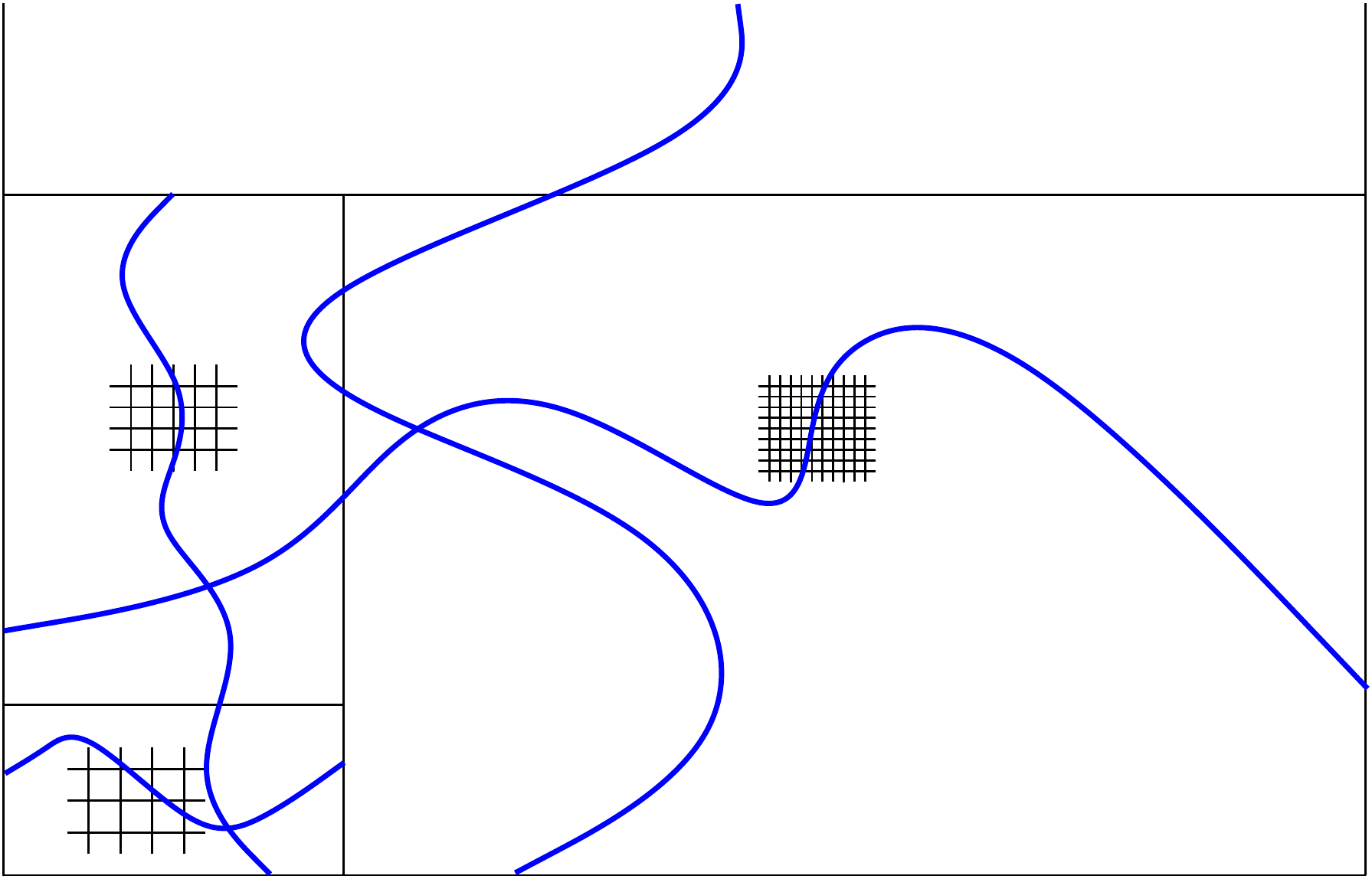}
\end{center}
\caption{A right choice of $\eps$ is $\eps=\log^{-1/3}(R)$ with whom we will be able to prove that $\Pro \left[ \cross(2R,R) \right]$ goes sufficiently fast to $1$ so that the box-crossing criterion of Lemma~\ref{l.criterion} is satisfied. As we will see in the proof of this lemma, this criterion implies that, for a well-chosen collection of rectangles $(Q_k)_k$ of typical length $R_k$, a.s. all the $Q_k$'s for $k$ sufficiently large are crossed, which implies that there exists an infinite path. The idea is that, for each $k$, we study the probability that it is crossed by comparing the continuous event with the discrete event with mesh $\eps =\log^{-1/3}(R_k)$.}\label{f.strat}
\end{figure}

Note that Item~1 above implies in particular that, if we work with the Bargmann-Fock field and if we fix $\eps \in ]0,1],$\footnote{In this paper, we only look at the case $\eps \in ]0,1]$ though a lot of results could probably be extended to any $\eps > 0$.} then for $p>0$, $\Pro \left[ \cross^\eps_p(2R,R) \right]$ goes to $1$ (sufficiently fast for us) as $R$ goes to $+\infty$. We will actually obtain such a result for more general fields, which will enable us to prove that a discrete box-crossing criterion analogous to Lemma~\ref{l.criterion} is satisfied and deduce the following in Section~\ref{ss.mainpf}:


\begin{thm}\label{t.epsKesten}
Suppose that $f$ satisfies Conditions~\ref{a.super-std} and~\ref{a.std} and Condition~\ref{a.fourier} for some $\alpha > 5$ (thus in particular Condition~\ref{a.pol_decay} is satisfied for some $\alpha > 5$). Then, for each $\eps \in ]0,1]$, the critical threshold of the discrete percolation model on $\calT^\eps$ defined in the present subsection is $0$. More precisely: the probability that there is an unbounded black connected component is $1$ if $p > 0$, and $0$ otherwise. Moreover, if it exists, such a component is a.s. unique.\\
In particular, this result holds when $f$ is the Bargmann-Fock field or the centered Gaussian field with covariance given by~\eqref{e.1over1+xsquared} with $n \geq 3$.
\end{thm}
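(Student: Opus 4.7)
The plan is to treat the two cases $p>0$ and $p\leq 0$ separately, combining the Gaussian analogs of the ingredients listed in Subsection~\ref{ss.kesten}: the discrete RSW of Theorem~\ref{t.epsRSW}, the FKG inequality (from Condition~\ref{a.std}), a Russo-type differential formula, and the KKL-type inequality of Theorem~\ref{t.KMSnonproduct}. Since the set of black vertices on $\calT^\eps$ is increasing in $p$ at the level of a single realization of $f$, it suffices to prove almost-sure nonexistence of an unbounded black component at $p=0$ and almost-sure existence (and uniqueness) at each $p>0$.

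For $p>0$, fix $\eps\in (0,1]$ and set $A_R:=\cross^{\eps}_p(2R,R)$. The target is to show that $\Pro[A_R]\to 1$ at a polynomial rate in $R$. The key fact at $p=0$ is Theorem~\ref{t.epsRSW}, which gives $c_1\leq \Pro[\cross^{\eps}_0(2R,R)]\leq 1-c_1$, the upper bound coming from applying RSW to white crossings of the rotated rectangle and using the self-matching property of the triangulated lattice $\calT^\eps$ combined with the $f\leftrightarrow -f$ symmetry at $p=0$. A Russo-type formula combined with Theorem~\ref{t.KMSnonproduct} then yields, for some absolute $c>0$,
\[
\frac{d}{dp}\Pro[A_R] \;\geq\; c\,\Pro[A_R]\bigl(1-\Pro[A_R]\bigr)\,\log\!\Bigl(\tfrac{1}{\max_{v\in V_R}\Infl^{p}_v(A_R)}\Bigr),
\]
where $V_R$ is the set of vertices of $\calT^\eps$ relevant for $A_R$. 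The main obstacle is bounding $\max_v \Infl^p_v(A_R)$, since in our setting influences are not simple covariances (see Remark~\ref{r.infl_with_cond}) and the naive bound is too weak. I would use a square-root trick tailored to the Gaussian setting (as planned in Section~\ref{s.sqrt}), exploiting translation invariance of $\calT^\eps$, the RSW estimate, and the quasi-independence of the field at large scales, to obtain $\max_v\Infl^{p}_v(A_R)\leq R^{-\delta}$ for some $\delta>0$. Integrating the differential inequality from $0$ to $p$ then produces $\Pro[A_R]\geq 1-C(p)\,R^{-\delta'(p)}$.

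This polynomial rate makes $\sum_k\Pro[\neg\cross^{\eps}_p(2^{k+1},2^k)]<\infty$, and a discrete analog of Lemma~\ref{l.criterion}---whose proof goes through verbatim after replacing the continuous $\calD_p$ with the discrete black set on $\calT^\eps$, and after using FKG together with a Borel-Cantelli gluing along a sequence of nested annular rectangles---yields an unbounded black component almost surely. Uniqueness then follows from a standard Burton-Keane argument: the black configuration is stationary and ergodic under the translations that preserve $\calT^\eps$, and the FKG inequality rules out trifurcation points, leaving at most one unbounded cluster.

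For $p\leq 0$, monotonicity in $p$ reduces the claim to nonexistence at $p=0$. Since $\calT^\eps$ is triangulated (hence site-percolation self-matching) and the symmetry $f\leftrightarrow -f$ makes the black and white configurations coincide in law at $p=0$, the classical Zhang argument applies: if an unbounded black component existed with positive probability, then by symmetry and ergodicity so would an unbounded white one, and the RSW lower bounds of Theorem~\ref{t.epsRSW} for both colors combined with FKG would force both unbounded clusters to cross every sufficiently large half-plane, contradicting planar topology. Hence no unbounded black component exists at $p=0$, completing the proof.
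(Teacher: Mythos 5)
Your outline for the $p>0$ direction follows the paper's skeleton (RSW at $p=0$, Russo-type formula, KKL, influence decay, integrate, then a discrete box-crossing criterion), but it has a genuine gap at the KKL step: the differential inequality you write, $\frac{d}{dp}\Pro[A_R]\geq c\,\Pro[A_R](1-\Pro[A_R])\log(1/\max_v\Infl_v^p(A_R))$, is the \emph{product-case} statement. For a correlated Gaussian vector, Theorem~\ref{t.KMSnonproduct} only gives this with an extra factor $\|\sqrt{\Sigma}\|_{\infty,op}^{-1}$ (and a $\sqrt{\log_+}$, with the norm also inside the logarithm), where $\Sigma$ is the covariance of the discretized field. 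So before integrating you must know that $K^\eps$ admits a symmetric square root with \emph{finite} infinite operator norm; this is exactly the first part of Proposition~\ref{p.square_root_estimate}, it is where Condition~\ref{a.fourier} with $\alpha>5$ enters (via the Fourier-positivity and the Poisson-summation construction of $\sqrt{K^\eps}$ in Section~\ref{s.sqrt}), and it is the hypothesis that distinguishes this theorem from the $\alpha>4$ results. Your proposal never invokes or establishes this; worse, you cite Section~\ref{s.sqrt} as the source of the bound $\max_v\Infl_v^p(A_R)\leq R^{-\delta}$, which is a misreading: the influence bound is Proposition~\ref{p.infl_decr} and comes from arm-event decay (Proposition~\ref{p.arm}, i.e.\ RSW plus quasi-independence), not from the square-root analysis. (A smaller point: with the correct $\sqrt{\log}$ form one gets $1-\exp(-c\sqrt{\log R})$, not a polynomial rate; this is still summable along $R=2^k$, so the criterion of Lemma~\ref{l.criterion_eps} is unaffected. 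One must also handle the mismatch between the infinite matrix $K^\eps$ and the finite-dimensional KKL statement, which the paper does by an approximation argument in Lemma~\ref{l.dg_R/dp}.)

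Your treatment of $p\leq 0$ and of uniqueness also diverges from the paper in a way that creates unproven obligations. Uniqueness via Burton--Keane, and nonexistence at $p=0$ via Zhang's argument, both require a finite-energy/insertion-tolerance property (FKG does not rule out trifurcation points), and establishing uniform insertion tolerance for a strongly correlated Gaussian field sampled on $\calT^\eps$ is a nontrivial issue that you do not address. The paper sidesteps this entirely: the case $p\leq 0$ is quoted from Theorem~\ref{t.cor_bg_eps} (the discrete Harris-type result of \cite{rv_rsw}, going back to \cite{bg_16}), and uniqueness for $p>0$ comes for free from the circuit-in-annuli Borel--Cantelli argument inside Lemma~\ref{l.criterion_eps}, which only uses the summable crossing estimate you have already produced. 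I would recommend using those two ingredients rather than importing Zhang/Burton--Keane machinery whose hypotheses are not verified here.
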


As in the continuous setting, the case $p \leq 0$ of Theorem~\ref{t.epsKesten} goes back to~\cite{bg_16}, at least for $\alpha$ large enough. In~\cite{rv_rsw}, we have optimized this result and obtain the following:
\begin{thm}[\cite{bg_16} for $\alpha$ sufficiently large,\cite{rv_rsw}]\footnote{More precisely, this is Proposition~$\harrisdiscrete$ of~\cite{rv_rsw}. Moreover, this can be extracted from the proof of Theorem~5.7 of~\cite{bg_16} for $\alpha$ sufficiently large and with slightly different assumptions on the differentiability and the non-degeneracy of $\kappa$.}\label{t.cor_bg_eps} Assume that $f$ satisfies Conditions~\ref{a.super-std},~\ref{a.std},~\ref{a.decay} as well as Condition~\ref{a.pol_decay} for some $\alpha > 4$. Let $\eps \in ]0,1]$ and consider the discrete percolation model on $\calT^\eps$ defined in the present subsection with parameter $p=0$. Then, a.s. there is no infinite connected black component.
\end{thm}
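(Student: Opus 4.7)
My plan is to carry out the classical Harris circuit argument, transferred to the discrete Gaussian setting, using only ingredients that are either imported from \cite{rv_rsw} or standard. The key observations are: (i) at $p=0$ we have the symmetry $f \stackrel{d}{=} -f$, so the laws of the black set $\{x \in \calV^\eps : f(x) \geq 0\}$ and the white set $\{x \in \calV^\eps : f(x) < 0\}$ coincide (the event $\{f(x)=0\}$ has probability zero for any fixed $x$ and is harmless); and (ii) site percolation on the triangulation $\calT^\eps$ is self-matching, so a black horizontal crossing of a rectangle cannot coexist with a white vertical crossing of the same rectangle. Combined with the discrete version of the RSW estimate proved in \cite{rv_rsw} (a discrete analog of Theorem~\ref{t.RSW}), one gets a uniform lower bound $\Pro[\cross_0^\eps(\rho R, R)] \geq c(\rho,\kappa,\eps) > 0$ for every $\rho > 0$ and every sufficiently large $R$, valid for \emph{both} colors.

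The second step is to upgrade crossings to white circuits surrounding the origin. The FKG inequality holds for $f$: by Pitt's theorem the non-negativity of $\kappa$ assumed in Condition~\ref{a.std} implies positive association of the Gaussian vector $(f(v))_{v \in \calV^\eps}$, hence positive association of the decreasing events defining white. Gluing four long white rectangular crossings around the square annulus $A_R = [-2R,2R]^2 \setminus [-R,R]^2$ using FKG and the uniform bound from the first step, I obtain that the probability $q_R$ that $A_R$ contains a white circuit around the origin is bounded below by a constant $c_\star(\kappa,\eps) > 0$ independent of $R$.

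The third step is to use quasi-independence to make these circuit events effectively independent across disjoint dyadic annuli $A_k := A_{2^k R_0}$. This is exactly where Conditions~\ref{a.decay} and~\ref{a.pol_decay} with $\alpha > 4$ enter: the quasi-independence estimate of \cite{rv_rsw} (Claim~$\quasiindependence$) controls the covariance between indicators of events localized in disjoint regions that are separated at a polynomial scale in terms of the distance between those regions, with an error that is summable when $\alpha > 4$. Together with the uniform lower bound $q_{2^k R_0} \geq c_\star$, a standard quasi-Borel--Cantelli argument (second moment method with the covariance error absorbed into the lower-order terms) then yields that a.s. infinitely many of the annuli $A_k$ contain a white circuit surrounding the origin.

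The conclusion is then immediate: any infinite black connected component containing a vertex in a bounded region would have to cross each of these nested white circuits, which is impossible since each site is either black or white but not both. Hence a.s. the black cluster of the origin is bounded; by the $\calT^\eps$-translation invariance of the discrete model, a.s. no vertex of $\calV^\eps$ belongs to an infinite black component, and so a.s. there is no infinite black component at all. I expect the main obstacle to be not the combinatorial/topological scheme, which is textbook, but rather checking that the lower bound $c_\star$ and the quasi-independence error are genuinely uniform in $R$ once the mesh $\eps \in (0,1]$ is fixed; both uniformities are provided by the discrete statements in \cite{rv_rsw} under precisely the hypotheses \ref{a.super-std}--\ref{a.decay} and \ref{a.pol_decay} with $\alpha > 4$ assumed in Theorem~\ref{t.cor_bg_eps}.
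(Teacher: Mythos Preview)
The paper does not prove Theorem~\ref{t.cor_bg_eps} itself; it is imported from \cite{rv_rsw} (Proposition~$\harrisdiscrete$ there), so there is no in-paper proof to compare line by line. Your sketch is the standard Harris circuit argument and is correct: symmetry $f\stackrel{d}{=}-f$ plus self-matching of the triangulation $\calT^\eps$ gives RSW for both colours, FKG glues crossings into white circuits in dyadic annuli with uniformly positive probability, and quasi-independence (Theorem~$\quasiindependence$ of \cite{rv_rsw}) under $\alpha>4$ lets you iterate across scales so that a.s.\ infinitely many white circuits surround the origin. This is indeed how the result is established in \cite{rv_rsw}.

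Two minor remarks. First, you write the RSW constant as $c(\rho,\kappa,\eps)$, but Theorem~\ref{t.epsRSW} in the paper gives a constant $c(\rho,\kappa)$ \emph{uniform in $\eps\in(0,1]$}; this does not affect your argument since $\eps$ is fixed, but it is worth noting. Second, the paper actually packages the circuit-plus-quasi-independence step into the one-arm decay estimate Proposition~\ref{p.arm}, i.e.\ $\Pro[\arm_0^\eps(R)]\leq CR^{-\eta}$, from which Theorem~\ref{t.cor_bg_eps} follows in one line (the black cluster of any vertex is a.s.\ bounded, hence by countable additivity over $\calV^\eps$ there is a.s.\ no infinite black cluster). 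Your second-moment/quasi-Borel--Cantelli route and the arm-decay route are equivalent reorganizations of the same multiplicative argument; the arm-decay packaging avoids having to spell out the variance computation and is how the statement is used later in the paper (e.g.\ in the proof of Proposition~\ref{p.infl_decr}).
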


Again as in the continuous case, this result heavily relies on a RSW estimate, which we state below (see Theorem~\ref{t.epsRSW}). This estimate is a uniform lower bound on the crossing probability of a quad scaled by $R$ on a lattice with mesh $\eps$. The proof of such an estimate goes back to~\cite{bg_16} (for $\alpha > 16$) and was later optimized to $\alpha > 8$ in~\cite{bm_17}. The key property in this estimate is that the lower bound of the crossing properties does not depend on the choice of the mesh $\eps$. To obtain such a result, the authors of~\cite{bg_16} had to impose some conditons on $(R,\eps)$. For instance, if we consider the Bargmann-Fock field, the constraint was $\eps \geq C\exp(-cR^2)$ where $C<+\infty$ and $c>0$ are fixed. Actually, it seems likely that, one could deduce a discrete RSW estimate with no constraint on $(R,\eps)$ by using their quantitative approximation results. In~\cite{rv_rsw}, we prove such a discrete RSW estimate with no constraint on $(R,\eps)$ and without using quantitative discretization estimates, but rather by using new quasi-independence results. Note that to prove our main result Theorem~\ref{t.main}, it would not have been a problem for us to rather use the result by Beffara and Gayet with constraints on $(R,\eps)$ since, as explained above, we will use results from the discrete model with $\eps=\eps(R)=\log^{-1/3}(R)$ - which of course satisfies the condition $\eps \geq C\exp(-cR^2)$. We have the following:
\begin{thm}[\cite{bg_16} for $\alpha > 16$,~\cite{bm_17} for $\alpha > 8$,\cite{rv_rsw}]\footnote{More precisely, this is Proposition~$\tassiondiscrete$ of~\cite{rv_rsw}. Moreover, this is Theorem~2.2 from~\cite{bg_16} combined with the results of their Section~4 (resp. this is Appendix~C of~\cite{bm_17}) for $\alpha > 16$ (resp. $\alpha > 8$), with some constraints on $(R,\eps)$, and with slightly different assumptions on the differentiability and the non-degeneracy of $\kappa$.}\label{t.epsRSW} Assume that $f$ satisfies Conditions~\ref{a.super-std},~\ref{a.std},~\ref{a.decay} as well as Condition~\ref{a.pol_decay} for some $\alpha > 4$. For every $\rho \in ]0,+\infty[$, there exists a constant $c=c(\kappa,\rho)>0$ such that the following holds: let $\eps \in ]0,1]$, and consider the discrete percolation model on $\calT^\eps$ defined in the present subsection with parameter $p=0$. Then, for every $R \in ]0,+\infty[$ we have:
\[
\Pro \left[ \cross^\eps_0(\rho R,R) \right] \geq c \, .
\]
\end{thm}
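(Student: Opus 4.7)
My plan is to follow Tassion's general approach~\cite{tassion2014crossing} for proving RSW estimates, specialized to the discrete Gaussian setting. The method requires three structural ingredients uniformly in the lattice mesh: (i) the FKG inequality for increasing events in the discrete coloring, (ii) a symmetric self-dual starting point at $p=0$ giving $\Pro[\cross^\eps_0(R,R)] \geq c_0 > 0$, and (iii) a quasi-independence estimate between events supported on well-separated regions. Once these are in hand, the argument reduces to Tassion's functional inequality, which combines FKG with quasi-independence to push crossings of squares into crossings of rectangles of arbitrary aspect ratio, with a constant depending only on $\rho$ and $\kappa$.

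Ingredient (i) comes from Pitt's theorem: under Condition~\ref{a.std} the covariance $\kappa$ is nonnegative, so increasing events in $f$ are positively correlated, and the discrete coloring on $\calT^\eps$ is monotone in the values $(f(v))_{v \in \calV^\eps}$, so the FKG property passes through. Ingredient (ii) follows by self-duality: since $f \distr -f$ and $\calT^\eps$ is chosen as a triangulation whose colored-site percolation admits a matching dual structure, the $\pi/2$-rotation and reflection symmetries of Condition~\ref{a.std} give that the probability of a horizontal black crossing of a square equals that of a vertical white crossing, and these two events together essentially cover the probability space, yielding $\Pro[\cross^\eps_0(R,R)] \geq 1/2$ uniformly in $\eps$ and $R$ by a square-root trick.

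The hard part is ingredient (iii), and in particular establishing it uniformly in $\eps \in ]0,1]$. I would aim to prove that for events $A, B$ depending on the colors on vertex sets $V_A, V_B \subset \calV^\eps$ separated by Euclidean distance at least $d$, one has $|\Pro[A \cap B] - \Pro[A]\Pro[B]| \leq \varepsilon(d)$ with $\varepsilon(d) \to 0$ in a rate depending only on the decay of $\kappa$ and its low-order derivatives. The standard approach is to decompose $f = \tilde f_A + \tilde f_B + g$ where $\tilde f_A, \tilde f_B$ are independent Gaussians localized near $V_A, V_B$, and then to control the effect of $g$ via a Gaussian interpolation (Piterbarg or Slepian-type) argument. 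The critical point is that the error must depend only on $\sup_{|x-y|\geq d} |\kappa(x-y)|$ and similar quantities, never on $|\calV^\eps|$, because the number of vertices blows up as $\eps \to 0$. This is the whole reason earlier approaches needed constraints on $(R,\eps)$ or larger exponents ($\alpha > 16$ or $\alpha > 8$); reaching $\alpha > 4$ in two dimensions requires exploiting Condition~\ref{a.decay} (smoothness and decay of the derivatives of $\kappa$), which allows integration-by-parts type manipulations in Fourier space that save exactly the powers of the distance needed to make the sum $\int |\kappa|$ over a box of side $R$ behave as if the field were almost independent on large scales.

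With (i)--(iii) in place, Tassion's machinery runs as a black box: define $\phi^\eps_R(\alpha)$ as the probability of a certain pivotal crossing event parametrized by $\alpha \in [0,R]$, show by the symmetries and FKG that $\phi^\eps_R$ is monotone and satisfies a differential-type inequality, deduce that $\phi^\eps_R(\alpha^*)$ is bounded below for some $\alpha^* \in (0,R)$, and finally glue together $O(\log \rho)$ crossings using FKG and the quasi-independence estimate to build a left-right crossing of $[0,\rho R]\times[0,R]$. All error accumulations are controlled by $\varepsilon(d)$ at well-separated scales, so the final constant $c(\kappa,\rho)$ depends only on $\rho$ and on $\kappa$, not on $\eps$ or $R$; this is precisely the uniformity claimed.
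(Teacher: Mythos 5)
Your outline correctly reproduces the architecture of the proofs in the references this theorem is quoted from. Note first that the present paper does not prove Theorem~\ref{t.epsRSW} at all: it imports it from \cite{rv_rsw} (Proposition~B.2 there), and the proof in that reference, as in \cite{bg_16,bm_17}, is indeed Tassion's method \cite{tassion2014crossing} fed with (i) Pitt's FKG inequality, (ii) the self-dual starting point $\Pro[\cross^\eps_0(R,R)]\geq 1/2$, which on the triangulation $\calT^\eps$ with $f\distr -f$ comes directly from the fact that exactly one of ``black horizontal crossing'' and ``white vertical crossing'' occurs (no square-root trick is needed for this step), and (iii) a quasi-independence estimate. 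So the skeleton is the right one.

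The genuine gap is ingredient (iii), which is the entire content of the statement at the claimed generality ($\alpha>4$, and uniformity in $\eps\in\,]0,1]$): you announce it but do not establish it, and the mechanism you sketch would fail. The classical Gaussian comparison route (Slepian/Piterbarg-type interpolation between the true covariance and the decoupled one) bounds $|\Pro[A\cap B]-\Pro[A]\Pro[B]|$ by a sum over \emph{pairs of vertices} of $|\kappa(x-y)|$, hence by a quantity of order $\eps^{-4}R^{4}\sup_{|x-y|\geq d}|\kappa(x-y)|$, which blows up as $\eps\downarrow 0$; this is precisely why \cite{bg_16,bm_17} needed either constraints on $(R,\eps)$ or larger exponents. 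Producing an error term that does not see $|\calV^\eps|$ is the main technical contribution of \cite{rv_rsw} (their Theorem~1.12), and it requires a genuinely different argument than pairwise comparison; invoking ``integration by parts in Fourier space'' on $\kappa$ does not by itself remove the vertex count. Without an actual proof of a mesh-independent quasi-independence statement under Conditions~\ref{a.std}--\ref{a.decay} with $\alpha>4$, the Tassion machinery cannot be run uniformly in $\eps$, so your proposal reduces the theorem to its hard part rather than proving it.
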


What remains to prove in order to show Theorem~\ref{t.epsKesten} is that, if $p>0$, a.s. there is a unique infinite black component. Exactly as in the continuum, our goal will be to show that a box-crossing criterion is satisfied. The following lemma is proved in Subsection~\ref{ss.criterion}.

\begin{lem}\label{l.criterion_eps}
Assume that $f$ satisfies Condition~\ref{a.super-std} and that $\kappa$ is invariant by $\frac{\pi}{2}$-rotations. Let $\eps > 0$, let $p \in \R$ and suppose that:
\begin{equation}\label{e.criterion_eps}
\sum_{k \in \N} \Pro \left[ \neg \cross_p^\eps(2^{k+1},2^k) \right] < +\infty \, .
\end{equation}
Then, a.s. there exists a unique unbounded black component in the discrete percolation model defined in the present subsection.
\end{lem}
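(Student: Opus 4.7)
The strategy mirrors the classical Bernoulli-percolation proof that a strong box-crossing lower bound forces an infinite cluster, adapted to the Gaussian-field setting and to the weaker hypotheses available here (in particular, no FKG is assumed).

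\textbf{Step 1 (symmetries).} The $\pi/2$-rotation invariance of $\kappa$ combined with stationarity of $f$ implies that the failure probability $\Pro[\neg \cross_p^\eps(2^{k+1}, 2^k)]$ is the same for every translate of $[0,2^{k+1}]\times[0,2^k]$ and, after rotation, also for every translate of $[0,2^k]\times[0,2^{k+1}]$ crossed vertically. Thus the summability assumption~\eqref{e.criterion_eps} remains valid after arbitrary translations and $\pi/2$-rotations.

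\textbf{Step 2 (circuits with summable failure).} For each $k$, surround the square $[-2^k, 2^k]^2$ by four $2{:}1$ rectangles placed in an annular pattern --- two long horizontal rectangles of dimensions $2^{m_k+1}\times 2^{m_k}$ above and below, two vertical ones to the left and right --- chosen so that each pair of adjacent rectangles overlaps in a common corner square. In each corner square the horizontal long-way crossing of one rectangle and the vertical long-way crossing of its neighbour are forced to meet by the planar Jordan curve theorem; hence if all four crossings hold, their union contains a black circuit $\gamma_k \subset \calD_p^\eps$ surrounding $[-2^k,2^k]^2$. A union bound gives
\[
\Pro[\gamma_k \text{ does not exist}] \leq 4\,\Pro[\neg \cross_p^\eps(2^{m_k+1}, 2^{m_k})],
\]
where $m_k = k+O(1)$, which is therefore summable in $k$.

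\textbf{Step 3 (bridges and Borel--Cantelli).} To guarantee that consecutive circuits lie in the same connected component, add at each scale a further $2{:}1$ rectangle positioned so that its long-way crossing is geometrically forced (again by Jordan's theorem in a common overlap rectangle) to meet both $\gamma_k$ and $\gamma_{k+1}$. Its failure probability has the same form and remains summable. Summing the finitely many summable failure probabilities per scale, the Borel--Cantelli lemma implies that, almost surely, for all $k$ large enough every required crossing holds. The nested circuits $(\gamma_k)$ together with the bridges then form a connected unbounded subset of $\calD_p^\eps$, so almost surely there exists at least one unbounded black cluster.

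\textbf{Step 4 (uniqueness).} The construction of Steps~2--3 can be recentred at any point $z \in \calV^\eps$; by stationarity it inherits the same summable failure bounds, so almost surely it produces arbitrarily large circuits around $z$. Since $\calV^\eps$ is countable, almost surely circuits around $z$ exist at arbitrarily large scales \emph{for every} $z \in \calV^\eps$ simultaneously. Now suppose two distinct unbounded clusters $C_1, C_2$ existed; pick sites $z_i \in C_i \cap \calV^\eps$ and a scale $R > |z_1-z_2|$ so large that the circuit $\gamma_R$ constructed around $z_1$ encloses both $z_1$ and $z_2$. Each $C_i$ is unbounded and has a point inside $\gamma_R$ (namely $z_i$) and a point outside, so by connectedness must intersect $\gamma_R$; as two distinct connected components of $\calD_p^\eps$ cannot share a point, each $C_i$ must contain the connected set $\gamma_R$, forcing $C_1 = C_2$, a contradiction.

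\textbf{Main obstacle.} Because Condition~\ref{a.std} (positive correlation) is not assumed in this lemma, FKG inequalities are unavailable and we cannot glue $2{:}1$ crossings into longer crossings using positive correlation. The entire argument must therefore be carried by the summability hypothesis together with naive union bounds, which is strong enough but leaves no slack. The delicate geometric point is to lay out the four circuit-rectangles and the bridge rectangles so that every required meeting of crossings is forced purely by the Jordan curve theorem inside an explicit corner overlap, while simultaneously respecting the rigid aspect-$2$ constraint on every rectangle involved.
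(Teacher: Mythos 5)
Your overall plan (union bounds plus Borel--Cantelli, circuits at all large scales, uniqueness via circuits) is close in spirit to the paper's, and Steps 1, 3 and 4 are sound in principle; in fact the paper gets existence more cheaply, without circuits or bridges, by noting that the horizontal crossing of $[0,2^{k+1}]\times[0,2^k]$ and the vertical crossing of $[0,2^{k+1}]\times[0,2^{k+2}]$ must intersect, so the crossings at successive scales chain into an unbounded component directly. The genuine gap is in your Step 2: a circuit surrounding $[-2^k,2^k]^2$ cannot be produced from only four rectangles of aspect ratio exactly $2$ glued at corner squares. For the Jordan-curve argument to force the long-way crossing of a horizontal rectangle to meet the long-way crossing of the adjacent vertical one, the corner overlap must span the full short side of both rectangles; if the overlap is only partial, the horizontal crossing can reach its right edge above the overlap while the vertical crossing stays to the right of it, and they never meet. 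But if each corner overlap of a $2a\times a$ horizontal rectangle is a full $a\times a$ end square shared with the neighbouring vertical rectangle, the two end squares exhaust the entire length $2a$, the union of the four rectangles is a solid square, and the enclosed hole is empty: the circuit you obtain is then not guaranteed to surround $[-2^k,2^k]^2$, since all four crossings may collapse toward the centre. With the rigid aspect-$2$ constraint you must either force the intersections and lose the hole, or keep the hole and lose the forced intersections; the ``delicate geometric point'' you flag at the end is in fact an obstruction to the four-rectangle layout.

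The gap is repairable, and the paper's uniqueness step shows how: build the circuit in the annulus $\ann(2^k,2^{k+1})$ from eight $2^{k+1}\times 2^k$ rectangles crossed lengthwise together with four $2^k\times 2^k$ squares crossed the appropriate way. The square crossings cost nothing new in the union bound, because a lengthwise crossing of a $2^{k+1}\times 2^k$ rectangle deterministically contains a crossing of a $2^k\times 2^k$ square inside it, so one gets $\Pro\left[ \neg \Circ_p^\eps(k) \right] \leq 12\, \Pro\left[ \neg \cross_p^\eps(2^{k+1},2^k) \right]$, which is still summable; no FKG is needed anywhere, consistent with your remark. With this replacement your bridges and your uniqueness argument go through, although re-centring circuits at every vertex of $\calV^\eps$ is unnecessary: circuits around the origin at all sufficiently large scales already meet every unbounded component, since such a component has points both inside and outside each large annulus, and this is exactly how the paper concludes.
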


\subsection{Sharp threshold at the discrete level}\label{ss.discrete}

We list the intermediate results at the discrete level. Among all the following results (and actually among all the intermediate results of the paper) the only result specific to the Bargmann-Fock field is the second result of Proposition~\ref{p.square_root_estimate}. All the others work in a quite general setting.

\subsubsection{The FKG inequality for Gaussian vectors}

The FKG inequality is a crucial tool to apply percolation arguments. We say that a Borel subset $A\subseteq\R^n$ is \textbf{increasing} if for each $x \in A$ and $y\in\R^n$ such that $y_i \geq x_i$ for every $i \in \lbrace 1, \cdots, n \rbrace$, we have $y \in A$. We say that $A$ is decreasing if the complement of $A$ is increasing. The FKG inequality for Gaussian vectors was proved by Pitt and can be stated as follows.
\begin{thm}[\cite{pitt1982positively}]\label{t.FKG}
Let $X = (X_1, \cdots, X_n)$ be a $n$-dimensional Gaussian vector whose correlation matrix has non-negative entries. Then, for every increasing Borel subsets $A,B \subseteq \R^n$, we have:
\[
\Pro \left[ X \in A \cap B \right] \geq \Pro \left[ X \in A \right] \cdot \Pro \left[ X \in B \right] \, .
\]
\end{thm}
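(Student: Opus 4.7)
The plan is to reduce to smooth coordinatewise-monotone test functions and then establish the inequality via a Gaussian integration-by-parts identity that represents the covariance as the integral of a manifestly nonnegative quantity. First I would carry out a standard approximation step: it suffices to prove $\cov(f(X), g(X)) \geq 0$ for all smooth bounded $f,g \colon \R^n\to\R$ with $\partial_i f, \partial_i g \geq 0$ pointwise for every $i$. One reaches this setting by approximating $\un_A$ and $\un_B$ monotonously from below by smooth coordinatewise-nondecreasing functions (e.g.\ convolving the indicators of slightly shrunken versions of $A$ and $B$ with a smooth mollifier and truncating at infinity), and then passing to the limit by bounded convergence.

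The main step will be a Gaussian interpolation. I would introduce an independent copy $Y$ of $X$ and set, for $t\in[0,\pi/2]$,
\[
Z_t \;:=\; \cos(t)\,X + \sin(t)\,Y,
\]
so that $Z_t \sim N(0,\Sigma)$ for every $t$, with $Z_0 = X$ and $Z_{\pi/2}=Y$ independent of $X$. Since $\E[f(X) g(Z_{\pi/2})] = \E[f(X)]\,\E[g(X)]$, we have
\[
\cov(f(X), g(X)) \;=\; -\int_0^{\pi/2} \frac{d}{dt}\,\E[f(X)\, g(Z_t)]\,dt.
\]
Computing $\dot Z_t = -\sin(t)\,X + \cos(t)\,Y$ and applying Gaussian integration by parts (Stein's identity $\E[X_i\, h(X,Y)] = \sum_k \Sigma_{ik}\,\E[\partial_{X_k} h]$ and its $Y$-analogue, using $X \perp Y$) termwise, the second-order contributions in $g$ produced by the $X$- and $Y$-partials will cancel thanks to the algebraic identity $-\sin(t)\cos(t) + \cos(t)\sin(t) = 0$. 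What survives is the interpolation formula
\[
\cov(f(X), g(X)) \;=\; \int_0^{\pi/2} \sin(t) \sum_{i,j=1}^n \Sigma_{ij}\, \E\bigl[(\partial_j f)(X)\,(\partial_i g)(Z_t)\bigr]\,dt.
\]

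Once this identity is in hand, the conclusion will follow from sign considerations alone: $\sin(t) \geq 0$ on $[0,\pi/2]$; the hypothesis on $X$ gives $\Sigma_{ij} = \E[X_iX_j] \geq 0$ for all $i,j$; and $(\partial_j f)(X), (\partial_i g)(Z_t) \geq 0$ by the coordinatewise monotonicity of $f$ and $g$. Therefore $\cov(f(X),g(X)) \geq 0$, and combined with the first step this yields $\Pro[X \in A \cap B] \geq \Pro[X \in A]\cdot\Pro[X \in B]$. The main technical obstacle will be the derivation of the interpolation formula: one must justify differentiation under the integral and carry out the Stein-identity bookkeeping carefully enough to see the Hessian-of-$g$ terms cancel (which is precisely why the preliminary reduction to smooth and bounded $f,g$ with bounded derivatives is needed). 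Note that nothing beyond $\Sigma$ being a covariance matrix with nonnegative entries is used; in particular, no assumption on conditional correlations or on the sign pattern of $\Sigma^{-1}$ is required.
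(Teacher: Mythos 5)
The paper does not prove this statement: it is quoted as Pitt's theorem and used as a black box, so there is no in-paper argument to compare against. Your proposal is a correct, self-contained proof, and it is essentially the classical covariance-interpolation route (which is in fact close in spirit to Pitt's original argument). The key identity checks out: writing $Z_t=\cos(t)X+\sin(t)Y$ with $Y$ an independent copy, Stein's identity applied to $\E[f(X)\,\partial_i g(Z_t)X_i]$ produces a first-order term $\sum_k\Sigma_{ik}\E[\partial_k f(X)\partial_i g(Z_t)]$ plus a Hessian term with prefactor $\cos(t)$, while the $Y_i$ term produces only a Hessian term with prefactor $\sin(t)$; multiplying by the components $-\sin(t)$ and $\cos(t)$ of $\dot Z_t$ the Hessian contributions cancel and one is left with $\tfrac{d}{dt}\E[f(X)g(Z_t)]=-\sin(t)\sum_{i,k}\Sigma_{ik}\E[\partial_k f(X)\partial_i g(Z_t)]$, whence your displayed formula and the sign argument. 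The only place deserving an extra word is the approximation step. For non-degenerate $\Sigma$ it is fine: an increasing Borel set meets each line in the direction $(1,\dots,1)$ in a half-line, so its boundary (and the set missed by your "shrink then mollify" lower approximations) is Lebesgue-null, hence null for the Gaussian law, and monotone convergence closes the argument. If one insists on the statement for possibly degenerate $X$ (the theorem as stated only requires the correlation matrix to have non-negative entries), the law of $X$ need no longer be absolutely continuous and can charge $\partial A$, so the lower approximations need not converge to $\Pro[X\in A]$; one should then either first perturb $X$ by an independent $\delta\mathcal{N}(0,\mathrm{Id})$ and handle the passage $\delta\downarrow 0$ with some care, or restrict attention, as the paper implicitly does, to non-degenerate vectors. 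This is a minor caveat and does not affect the use of the theorem in the paper.
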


This result is the reason why we work with positively correlated Gaussian fields. Indeed, the FKG inequality is a crucial ingredient in the proof of RSW-type results. Note however that the very recent~\cite{bg_17} proves a box-crossing property without this inequality, albeit only in a discrete setting.

\subsubsection{A differential formula}\label{sss.russo}

As in the case of Bernoulli percolation, we need to introduce a notion of influence. This notion of influence is inspired by the \textbf{geometric influences} studied by Keller, Mossel and Sen in the case of product spaces~\cite{keller2012geometric,keller2014geometric}. (For more about the relations between the geometric influences and the influences of Definition~\ref{d.infl} below - which are roughly the same in the case of product measures - see Subsection~\ref{ss.KMSnonproduct}.)

\begin{defi}\label{d.infl}
Let $\mu$ be a finite Borel measure on $\R^n$, let $v \in \R^n$, and let $A$ be a Borel subset of $\R^n$.  The influence of $v$ on $A$ under $\mu$ is:
\[
I_{v,\mu}(A) := \underset{r \downarrow 0}{\liminf} \, \frac{\mu \left( A + [-r,r] \, v \right) - \mu \left( A \right)}{r} \in [0,+\infty] \, .
\]
Write $\left( e_1, \cdots, e_n \right)$ for the canonical basis of $\R^n$. We will use the following simplified notations:
\[
I_{i,\mu}(A) := I_{e_i,\mu}(A) \, .
\]
\end{defi}

The events we are interested in are ``threshold events'' and the measures we are interested in are Gaussian distributions: Let $X=(X_1, \cdots, X_n)$ be a $n$-dimensional non-degenerate centered Gaussian vector, write $\mu_X$ for the law of $X$ and, for every $\overrightarrow{p} = (p_1, \cdots, p_n) \in \R^n$ and every $i \in \lbrace 1, \cdots, n \rbrace$, write:
\[
\omega^{\overrightarrow{p}}_i := \un_{\lbrace X_i \geq -p_i \rbrace} \, .
\] 
This defines a random variable $\omega^{\overrightarrow{p}}$ with values in $\lbrace 0,1 \rbrace^n$. If $B \subseteq \lbrace 0,1 \rbrace^n$, we call the event $\lbrace \omega^{\overrightarrow{p}} \in B \rbrace$ a \textbf{threshold event}. For every $i \in \lbrace 1, \cdots, n \rbrace$, we let $\Piv_i^{\overrightarrow{p}}(B)$ denote the event that changing the value of the bit $i$ in $\omega^{\overrightarrow{p}}$ modifies $\un_B(\omega^{\overrightarrow{p}})$. In other words,
\[
\Piv^{\overrightarrow{p}}_i(B)=\lbrace (\omega^{\overrightarrow{p}}_1,\dots,\omega^{\overrightarrow{p}}_{i-1},0,\omega^{\overrightarrow{p}}_{i+1},\dots,\omega^{\overrightarrow{p}}_n)\in B\rbrace\bigtriangleup\lbrace (\omega^{\overrightarrow{p}}_1,\dots,\omega^{\overrightarrow{p}}_{i-1},1,\omega^{\overrightarrow{p}}_{i+1},\dots,\omega^{\overrightarrow{p}}_n)\in B\rbrace \, ,
\]
where $E \bigtriangleup F := (E\setminus F) \cup (F \setminus E)$. Such an event is called a \textbf{pivotal event}. We say that a subset $B\subseteq \lbrace 0,1\rbrace^n$ is \textbf{increasing} if for every $\omega \in B$ and $\omega'\in \lbrace 0,1\rbrace^n$, the fact that $\omega'_i \geq \omega_i$ for every $i$ implies that $\omega' \in B$. Moreover, if $p \in \R$, we write $\mathbf{p}=(p,\cdots,p) \in \R^n$ and we use the following notations:
\[
\omega^p := \omega^{\mathbf{p}} \: \text{  and  } \: \Piv_i^p(B) := \Piv_i^{\mathbf{p}}(B) \, .
\]

\begin{prop}\label{p.russo}
Assume that $X$ is a $n$-dimensional non-degenerate centered Gaussian vector and let $\Sigma$ be its covariance matrix. Let $B$ be an increasing subset of $\lbrace 0,1 \rbrace^n$. Then:
\[
\frac{\partial\prob\left[\omega^{\vp}\in B\right]}{\partial p_i}=I_{i,\mu_X}(\omega^{\vp}\in B)=\prob\left[\piv_i^{\vp}(B) \cond X_i=-p_i\right]\frac{1}{\sqrt{2\pi\Sigma_{i,i}}}\exp \left(-\frac{1}{2\Sigma_{i,i}}p_i^2\right) \, .
\]
In particular, if $p \in \R$, then:
\[
\frac{d \, \Pro \left[ \omega^p \in B \right]}{dp} = \sum_{i=1}^n I_{i,\mu_X}(\omega^p \in B) = \sum_{i=1}^n \Pro \left[ \Piv_i^p(B) \cond X_i = -p \right]\frac{1}{\sqrt{2\pi\Sigma_{i,i}}}\exp \left(-\frac{1}{2\Sigma_{i,i}}p^2\right) \, .
\] 
\end{prop}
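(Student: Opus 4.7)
The plan is to establish both equalities by computing the partial derivative directly via conditioning, and then recognizing that the geometric influence reduces to the same derivative because the underlying set is increasing.

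First, I reformulate the event. Let $\tilde B=\{x\in\R^n:(\un_{x_1\geq 0},\dots,\un_{x_n\geq 0})\in B\}$, so that $\{\omega^{\vp}\in B\}=\{X\in \tilde B-\vp\}$, and note that $\tilde B$ is increasing in $\R^n$ because $B$ is increasing in $\{0,1\}^n$. Next, the pivotal event $\piv_i^{\vp}(B)$ depends only on $\omega^{\vp}_{-i}$, hence only on $(X_j)_{j\neq i}$ and $\vp_{-i}$. Since $B$ is increasing, for each realization of $X_{-i}$ exactly one of three cases occurs: both $(\omega^{\vp}_{-i},0)$ and $(\omega^{\vp}_{-i},1)$ lie in $B$, both lie outside, or only $(\omega^{\vp}_{-i},1)\in B$ (and this last case is precisely $\piv_i^{\vp}(B)$). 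Therefore
\[
\prob[\omega^{\vp}\in B]=\prob[\omega^{\vp}_{-i}\text{ makes both values lie in }B]+\prob[\piv_i^{\vp}(B),\,X_i\geq -p_i],
\]
and the first term does not depend on $p_i$.

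Step 1 (compute the derivative). Conditioning the second term on $X_{-i}$,
\[
\prob[\piv_i^{\vp}(B),\,X_i\geq -p_i]=\int \un_{\piv_i^{\vp}(B)}(x_{-i})\,\prob[X_i\geq -p_i\mid X_{-i}=x_{-i}]\,f_{X_{-i}}(x_{-i})\,dx_{-i}.
\]
By non-degeneracy of $X$, the conditional law of $X_i$ given $X_{-i}=x_{-i}$ is Gaussian with positive variance, so its distribution function is $C^1$ in $p_i$ with derivative $f_{X_i\mid X_{-i}=x_{-i}}(-p_i)$. Differentiating under the integral (justified by the smoothness and integrability of the Gaussian density, which provides a uniform dominating function in a neighborhood of $p_i$) and using Bayes' identity $f_{X_i\mid X_{-i}}(-p_i\mid x_{-i})f_{X_{-i}}(x_{-i})=f_{X_{-i}\mid X_i}(x_{-i}\mid -p_i)f_{X_i}(-p_i)$, I get
\[
\frac{\partial\,\prob[\omega^{\vp}\in B]}{\partial p_i}=f_{X_i}(-p_i)\int \un_{\piv_i^{\vp}(B)}(x_{-i})\,f_{X_{-i}\mid X_i=-p_i}(x_{-i})\,dx_{-i}=f_{X_i}(-p_i)\,\prob[\piv_i^{\vp}(B)\mid X_i=-p_i].
\]
Since $X_i\sim\mathcal N(0,\Sigma_{i,i})$, the density $f_{X_i}(-p_i)=\frac{1}{\sqrt{2\pi\Sigma_{i,i}}}\exp(-\frac{p_i^2}{2\Sigma_{i,i}})$, which gives the right-hand formula. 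The sum formula when $\vp=\mathbf p$ then follows by the chain rule applied to $p\mapsto(p,\dots,p)$.

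Step 2 (identify the derivative with the geometric influence). Because $\tilde B-\vp$ is increasing, for every $r>0$ one has $(\tilde B-\vp)+[-r,r]e_i=\tilde B-\vp-re_i$, so
\[
\mu_X\bigl((\tilde B-\vp)+[-r,r]e_i\bigr)=\prob[X+\vp+re_i\in \tilde B]=\prob[\omega^{\vp+re_i}\in B].
\]
Hence
\[
I_{i,\mu_X}(\{\omega^{\vp}\in B\})=\liminf_{r\downarrow 0}\frac{\prob[\omega^{\vp+re_i}\in B]-\prob[\omega^{\vp}\in B]}{r},
\]
and by Step 1 the function $p_i\mapsto\prob[\omega^{\vp}\in B]$ is differentiable, so this liminf equals the partial derivative, completing the chain of equalities.

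The only mildly delicate point is justifying the differentiation under the integral sign; everything else is bookkeeping around the observations that $B$ increasing forces a clean trichotomy for each slice in the $i$-th coordinate, and that Minkowski-summing with $[-r,r]e_i$ collapses to a one-sided shift on an increasing set.
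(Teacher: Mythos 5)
Your proof is correct, and it reaches the same two identities as the paper but by a somewhat different analytic route, so let me compare. The combinatorial heart is identical: both arguments rest on the pivotal decomposition of the increasing event (your explicit trichotomy ``both in / both out / pivotal'' is exactly what the paper uses implicitly when it writes the increment in $p_i$ as $\prob\left[\Piv_i^{\vp}(B),\, X_i\in[-p_i-h,-p_i[\right]$). Where you diverge is in how the derivative is extracted: the paper disintegrates over the value of $X_i$, writes the difference quotient as $\frac1h\int_{-p_i-h}^{-p_i}\prob\left[\Piv_i^{\vp}(B)\cond X_i=t\right]\gamma(t)\,dt$ and lets $h\downarrow 0$, which requires the continuity of $t\mapsto\prob\left[\Piv_i^{\vp}(B)\cond X_i=t\right]$ (imported from Proposition~1.2 of Aza\"is--Wschebor); you instead isolate the $p_i$-dependence as $\prob\left[\Piv_i^{\vp}(B),\,X_i\geq -p_i\right]$, disintegrate over $X_{-i}$, differentiate under the integral sign (legitimate, since for $p_i$ in a compact neighborhood the joint Gaussian density is dominated by an integrable function of $x_{-i}$), and then flip the conditioning by Bayes. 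This trades the continuity-in-the-conditioning-value input for an elementary dominated-convergence argument, and it gives the two-sided derivative in one stroke where the paper treats $h>0$ and $h<0$ separately. You also make explicit the step the paper leaves tacit, namely that for an increasing set the Minkowski sum $A+[-r,r]e_i$ collapses to the shift $A-re_i$, so the influence is literally the right difference quotient of $p_i\mapsto\prob\left[\omega^{\vp}\in B\right]$ and hence equals the derivative once differentiability is known. Two small points you should make fully explicit if you write this up: the dominating function (e.g.\ $\sup_{|t-p_i|\leq\delta}f_X(x_{-i},-t)$, which is integrable by positive definiteness of $\Sigma$), and, for the ``in particular'' sum formula, that the partial derivatives you computed are continuous in $\vp$ (the conditional law of $X_{-i}$ given $X_i=-p_i$ is a non-degenerate Gaussian depending continuously on $p_i$, and the pivotal event is a Boolean combination of half-spaces, whose boundary is null), so the multivariate chain rule along $p\mapsto(p,\dots,p)$ is justified; this is at the same level of detail the paper itself omits.
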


\begin{remark}\label{r.infl_with_cond}
With the same hypotheses as in Proposition~\ref{p.russo}: For $\epsilon \in \{ 0,1 \}$, let $B^i_\epsilon = \{ \omega \in \{ 0,1 \}^n \, : \, (\omega_1, \cdots, \omega_{i-1}, \epsilon, \omega_{i+1}, \cdots, \omega_n) \in B \}$. Note that we have: \[
\prob\left[\piv_i^{\vp}(B) \cond X_i=-p_i\right] = \prob \left[ \omega^{\vp} \in B_1^i \cond X_i = -p_i \right] -  \prob \left[ \omega^{\vp} \in B_0^i \cond X_i = -p_i \right] \, .
\]
Hence the form of our influences is close to the covariance between $\un_{\{ \omega_i^{\vp} = 1 \}}$ and $\un_{\{ \omega^{\vp} \in B \}}$ but is not exactly a covariance. This is important to have in mind if one wants to try to apply techniques from~\cite{duminil2017sharp,duminil2017exponential} in the future.
\end{remark}

Since the proof of Proposition~\ref{p.russo} is rather short, we include this here. The reader essentially interested in the strategy of proof can skip this in a first reading.
\begin{proof}[Proof of Proposition \ref{p.russo}]
Without loss of generality, we assume that $i=n$. For any $C \subseteq \{ 0,1 \}^n$, let $C^{\vp}\subseteq\R^n$ be the preimage of $C$ by the map $x \in \R^n \mapsto (\un_{x_i \geq -p_i})_{i \in \{ 1, \cdots, n \}} \in \{ 0,1 \}^n$. Let $e_n=(0,\dots,0,1)\in\R^n$, $h>0$, let $B \subseteq \{ 0,1 \}^n$ be an increasing event, and let $\tilde{X}=(X_i)_{1\leq i\leq n-1}$. Then:
\[
    \prob\left[\omega^{\vp+he_n}\in B\right]=\prob\left[X\in B^{\vp+he_n}\right]=\prob\left[X+he_n\in B^{\vp}\right]=\prob\left[(\tilde{X},X_n+h)\in B^{\vp}\right] \, ,
\]
and $\prob\left[\omega^{\vp}\in B\right]=\prob\left[(\tilde{X},X_n)\in B^{\vp}\right]$.\\

Also:
  \begin{align*}
    \prob\left[(\tilde{X},X_n+h)\in 
    B^{\vp}\right]-\prob\left[(\tilde{X},X_n)\in B^{\vp}\right]&=\prob\left[(\tilde{X},X_n+h)\in B^{\vp}, \, (\tilde{X},X_n)\notin B^{\vp}\right]\\
    &=\prob\left[\Piv^{\vp}_n(B),\, X_n\in[-p_n-h,-p_n[\right] \, .
  \end{align*}
  Since $\Sigma$ is positive definite, the Gaussian measure has smooth density with respect to the Lebesgue measure. Taking the difference of the two probabilities and letting $h \downarrow 0$, we get:
  \begin{align*}
    &h^{-1}\left(\prob\left[\omega^{\vp+he_n}\in B\right]-\prob[\omega^{\vp}\in B)]\right)\\
    &=h^{-1}\prob\left[\piv_n^{\vp}(B),\ X_n\in[-p_n-h,-p_n[\right]\\
    &=\frac{1}{h\sqrt{2\pi\Sigma_{n,n}}}\int_{-p_n-h}^{-p_n}\Pro\left[\Piv_n^{\vp}(B) \cond X_n=t \right]\exp \left( -\frac{1}{2\Sigma_{n,n}}t^2 \right)dt\\
        &\underset{h \downarrow 0}{\longrightarrow} \prob\left[\piv_n^{\vp}(B) \cond X_n=-p_n\right]\frac{1}{\sqrt{2\pi\Sigma_{n,n}}} \exp \left( -\frac{1}{2\Sigma_{n,n}}p_n^2 \right) \, .
  \end{align*}
In the last step we use the continuity of the conditional probability of a threshold event with respect to the conditioning value. This is an easy consequence of Proposition 1.2 of \cite{azws}. The calculation for $h<0$ is analogous, hence we are done.
\end{proof}

\subsubsection{A KKL-KMS (Kahn-Kalai-Linial -- Keller-Mossel-Sen) theorem for non-product Gaussian vectors}\label{sss.KMS}

One of the contributions of this paper is the derivation of a KKL theorem for non-product Gaussian vectors, namely Theorem~\ref{t.KMSnonproduct}, based on a similar result for product Gaussian vectors by Keller, Mossel and Sen,~\cite{keller2012geometric}. (This similar result by~\cite{keller2012geometric} is stated in Theorem~\ref{t.KMS} of our paper\footnote{As we will explain, Theorem~\ref{t.KMS} is a simple consequence of Item~$(1)$ of Theorem~1.5 of~\cite{keller2012geometric}.}.) Actually, with our techniques of Section~\ref{s.KMS}, most of the results from~\cite{keller2012geometric} could be extended to non-product Gaussian vectors (and to monotonic events).

\begin{thm}\label{t.KMSnonproduct}
There exists an absolute constant $c > 0$ such that the following holds: Let $n \in \Z_{>0}$, let $\Sigma$ be a $n \times n$ symmetric positive definite matrix\footnote{Remember Remark~\ref{r.pos_def}: this means in particular that $\Sigma$ is non-degenerate.} and let $\mu = \mathcal{N}(0,\Sigma)$. Also, let $\sqrt{\Sigma}$ be a symmetric square root of $\Sigma$ and write $\| \sqrt{\Sigma} \|_{\infty,op}$ for the operator norm of $\sqrt{\Sigma}$ for the infinite norm\footnote{I.e. $\| \sqrt{\Sigma} \|_{\infty,op} = \sup_{x\in \R^n \setminus \{0\}} \frac{|\sqrt{\Sigma}(x) |_\infty}{| x |_\infty}$. Equivalently, $\| \sqrt{\Sigma} \|_{\infty,op} = \underset{i \in \lbrace 1, \cdots, n \rbrace}{\text{\textup{max}}} \sum_{j=1}^n |\sqrt{\Sigma}(i,j)|$.} on $\R^n$. For every $A$ monotonic Borel subset of $\R^n$ we have:

\[
\sum_{i=1}^n I_{i,\mu}(A) \geq c \, \| \sqrt{\Sigma} \|_{\infty,op}^{-1} \, \mu(A) \, (1-\mu(A)) \, \sqrt{\log_+ \left( \frac{1}{\| \sqrt{\Sigma} \|_{\infty,op} \cdot \underset{i \in \lbrace 1, \cdots, n \rbrace}{\text{\textup{max}}} I_{i,\mu}(A)} \right)} \, .
\]
\end{thm}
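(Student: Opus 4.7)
The approach is to reduce Theorem~\ref{t.KMSnonproduct} to its product-Gaussian counterpart Theorem~\ref{t.KMS} via the linear change of variables $X = \sqrt{\Sigma}\, Y$, where $Y$ is an $n$-dimensional standard Gaussian vector. Given a monotonic Borel set $A \subseteq \R^n$, set $B := \sqrt{\Sigma}^{-1}(A)$, so that $\gamma_n(B) = \mu(A)$, where $\gamma_n$ denotes the standard Gaussian measure on $\R^n$. First I would apply Theorem~\ref{t.KMS} to $B$ under $\gamma_n$; then I would translate both the sum and the maximum of the axis-aligned geometric influences of $B$ back into the corresponding quantities for $A$ under $\mu$. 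This translation is exactly where the factor $\|\sqrt{\Sigma}\|_{\infty,op}$ will enter.

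The translation relies on two observations. First, a direct change of variables gives
\[
I_{i,\gamma_n}(B) \;=\; I_{\sqrt{\Sigma}\, e_i,\, \mu}(A) \qquad \text{for every } i \in \{1, \ldots, n\},
\]
since the $e_i$-translates of $B$ correspond exactly to the $\sqrt{\Sigma}\, e_i$-translates of $A$ under $y \mapsto \sqrt{\Sigma}\, y$. Second, because $A$ is monotonic, the directional geometric influence should satisfy the sub-linearity bound
\[
I_{v,\mu}(A) \;\leq\; \sum_{j=1}^n |v_j|\, I_{j,\mu}(A) \qquad \text{for every } v \in \R^n.
\]
The idea is to exploit the inclusion $A + [-r,r]v \subseteq A + \sum_j [-r|v_j|, r|v_j|]\, e_j$ (Minkowski sum of axis-aligned segments); when $A$ is monotonic, the right-hand side equals $A + r \sum_j |v_j|\, e_j$ and telescopes coordinate by coordinate, each step contributing at most $r|v_j|\, I_{j,\mu}(A) + o(r)$ as $r \downarrow 0$, by a continuity argument for axis-aligned influences along a monotone family of sets. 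A clean alternative is a small-mollifier smoothing of $\un_A$, which yields the representation $I_{v,\mu}(A) = \int_{\partial A} |v \cdot n|\, d\sigma_\mu$, after which the bound follows immediately from $|v \cdot n| \leq \sum_j |v_j|\, |n_j|$, before one passes to the limit.

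Combining these two observations with the symmetry of $\sqrt{\Sigma}$ and the equivalent expression $\|\sqrt{\Sigma}\|_{\infty,op} = \max_i \sum_j |\sqrt{\Sigma}(i,j)|$ yields
\[
\sum_{i=1}^n I_{i,\gamma_n}(B) \;\leq\; \|\sqrt{\Sigma}\|_{\infty,op}\, \sum_{j=1}^n I_{j,\mu}(A), \qquad \max_i I_{i,\gamma_n}(B) \;\leq\; \|\sqrt{\Sigma}\|_{\infty,op}\, \max_j I_{j,\mu}(A).
\]
Plugging these two bounds into the KMS-type estimate $\sum_i I_{i,\gamma_n}(B) \geq c\, \gamma_n(B)(1-\gamma_n(B))\, \sqrt{\log_+(1/\max_i I_{i,\gamma_n}(B))}$ provided by Theorem~\ref{t.KMS}, and using $\gamma_n(B) = \mu(A)$, produces exactly the inequality claimed in Theorem~\ref{t.KMSnonproduct}.

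The main obstacle will be the sub-linearity estimate $I_{v,\mu}(A) \leq \sum_j |v_j|\, I_{j,\mu}(A)$ for a general, possibly non-smooth, monotonic Borel set $A$: this is where the monotonicity is essentially used, and the care required to push the telescoping argument through without losing constants (or alternatively, to justify the mollifier approximation with a surface-measure representation) is the most delicate part. A secondary technical point to watch is that $B = \sqrt{\Sigma}^{-1}(A)$ is in general \emph{not} monotonic, so one needs Theorem~\ref{t.KMS} in a form that applies to arbitrary Borel subsets of $\R^n$; if that version is not available, one would instead have to revisit the semigroup/hypercontractivity proof of~\cite{keller2012geometric} and adapt it to $B$ rather than using it as a black box.
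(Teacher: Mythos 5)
Your proposal follows essentially the same route as the paper: the pushforward reduction $B=\sqrt{\Sigma}^{-1}(A)$, the identity $I_{i,\gamma_n}(B)=I_{\sqrt{\Sigma}e_i,\mu}(A)$, the sub-linearity $I_{v,\mu}(A)\leq\sum_j|v_j|I_{j,\mu}(A)$ for monotonic $A$ proved by telescoping coordinate by coordinate with a continuity lemma for axis-aligned influences along the translated family (this is exactly Proposition~\ref{p.sublin} and Lemma~\ref{l.conv}), and the final combination via the symmetry of $\sqrt{\Sigma}$. You also correctly flag the two genuine delicacies — that $B$ need not be monotonic, which is harmless since Theorem~\ref{t.KMS} (and the paper's Corollary~\ref{c.KMS}, obtained from it by a Fatou argument comparing $I_{i,\nu^{\otimes n}}$ with the geometric influences) holds for arbitrary Borel sets, and that the sub-linearity for non-smooth monotonic sets is the hard part.
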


This theorem holds for a wider class of sets. For instance, it holds for semi-algebraic\footnote{We say that a set $A\subset\R^n$ is semi-algebraic if it belongs to the Boolean algebra generated by sets of the form $P^{-1}\left(]0,+\infty[\right)$ where $P\in\R[X_1,\dots,X_n]$.}, see Appendix A of Chapter 7 of \cite{alelele} or Appendix A of Chapter 2 of \cite{hugogogo}. Since we need it only for monotonic events, we did not try to identify the weakest possible assumptions for the property to  hold. In particular, we have not found any examples of Borel sets for  which the theorem does not hold.\\

A way to understand the constant $\| \sqrt{\Sigma} \|_{\infty,op}$ is to see what happens in the extreme cases:
\bi 
\item If $\Sigma$ is the identity matrix, then $\| \sqrt{\Sigma} \|_{\infty,op}=1$ and the above result corresponds to the product case from~\cite{keller2012geometric}.
\item If $\Sigma_{i,j}=1$ for every $i,j$ (which corresponds to the fact that, if $X \sim \Sigma$, then $X_i=X_j$ for every $i,j$) then $\| \sqrt{\Sigma} \|_{\infty,op}=n$ which is coherent since there is no threshold phenomenon for a single variable.
\ei

The proof of Theorem~\ref{t.KMSnonproduct} is organized as follows: In Subsection~\ref{ss.KMSnonproduct}, we explain how to deduce Theorem~\ref{t.KMSnonproduct} from a sub-linear property of the influences (see Proposition~\ref{p.sublin}) and from the results of~\cite{keller2012geometric}. In Subsection~\ref{ss.monotonic}, we prove the sub-linearity property for monotonic subsets.\\

Remember that we want to prove that $\Pro \left[ \cross_p(2R,R) \right]$ is close to $1$ and that our first step is to prove that it is the case for $\Pro \left[ \cross_p^\eps(2R,R) \right]$. To do so, we use that for $p=0$ this probability is bounded from below (by Theorem~\ref{t.epsRSW}) and we differentiate the probability with respect to $p$ using Proposition~\ref{p.russo}. We then apply Theorem~\ref{t.KMSnonproduct} to the right-hand side so that it is sufficient to prove that the maximum of the corresponding influences is small and that the operator norm for the infinite norm of the correlation matrix of our model is not too large. In the two following subsubsections, we state results in this spirit.

\subsubsection{Polynomial decay of influences}\label{sss.pol}

Thanks to the RSW estimate and quasi-independence results, one can obtain that the probability of ``discrete arm events'' decay polynomially fast, see Subsection~5.3 of~\cite{bg_16} and Proposition~$\polynomially$ of~\cite{rv_rsw}. Such a result together with monotonic arguments imply that we have a polynomial bound on the influences for crossing events. We state this bound here and we prove it in Subsection~\ref{ss.infl_decr}. We first need some notations. Let $\mathcal{V}_R^\eps$ denote the set of vertices $x \in \calV^\eps$ that belong to an edge that intersects $[0,2R] \times [0,R]$, and let $X$ be our Gaussian field $f$ restricted to $\calV^\eps_R$. Thus, $X$ is a finite dimensional Gaussian field. For every $p \in \R$, $x \in \calV^\eps_R$ and $B \subseteq \R^{\calV^\eps_R}$, we use the notations $\omega^p$ and $\Piv^p_x(B)$ from Subsubsection~\ref{sss.russo}. In particular, for every $x \in \mathcal{V}_R^\eps$ and every $p \in \R$ we write:
\[
\omega^p_x = \un_{ \{ X_x \geq -p \}} = \un_{\{f(x) \geq -p\}} \, .
\]
Also, we write $\cross^\eps(2R,R)$ for the subset of $\lbrace 0,1 \rbrace^{\mathcal{V}_R^\eps}$ that corresponds to the crossing from left to right of $[0,2R] \times [0,R]$. Note that $\cross^\eps_p(2R,R) = \lbrace \omega^p \in \cross^\eps(2R,R) \rbrace$. We have the following result (see Proposition~\ref{p.russo} for its link with the influences):
 
\begin{prop}\label{p.infl_decr}
Assume that $f$ satisfies Conditions~\ref{a.super-std},~\ref{a.std},~\ref{a.decay} as well as Condition~\ref{a.pol_decay} for some $\alpha > 4$. Then, there exist constants $\eta = \eta(\kappa) > 0$ and $C = C(\kappa) < +\infty$ such that, for every $\eps \in ]0,1]$, every $R \in ]0,+\infty[$, every $x \in \calV^\eps_R$ and every $p \in [-1,1]$ we have:
\[
\Pro \left[ \Piv_x^p(\cross^\eps(2R,R)) \cond f(x) = -p \right] \leq C \, R^{-\eta} \, .
\]
\end{prop}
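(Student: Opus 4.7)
The strategy is a three-step reduction: from the conditional pivotal probability to a multi-arm event at $x$, from that multi-arm event to the polynomial arm decay of~\cite{rv_rsw}, and a final transfer from the unconditional to the conditional law.

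The event $\Piv_x^p(\cross^\eps(2R,R))$ is measurable with respect to $(f(y))_{y\in\calV^\eps_R\setminus\{x\}}$, since pivotality does not depend on the bit $\omega^p_x$. Standard planar duality for site percolation on the triangulation $\calT^\eps$ shows that if $x$ is pivotal then the configuration $\omega^p$ on $\calV^\eps_R\setminus\{x\}$ must admit four alternating arms: two disjoint black paths (along sites $y$ with $f(y)\geq -p$) from neighbors of $x$ to the left and right sides of $[0,2R]\times[0,R]$, together with two disjoint white paths from neighbors of $x$ to the bottom and top sides. Letting $d_L,d_R,d_B,d_T$ denote the distances from $x$ to these four sides, this forces either a full four-arm event at scale of order $R^{1/2}$ (if $\min(d_L,d_R,d_B,d_T)\geq R^{1/2}$), a half-plane three-arm event at scale of order $R$ (if $x$ lies within $R^{1/2}$ of exactly one side), or a quarter-plane two-arm event at scale of order $R$ (if $x$ is near a corner). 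In every case, the polynomial decay of alternating arm event probabilities established as Proposition~\polynomially{} of~\cite{rv_rsw}---which itself combines the RSW estimate of Theorem~\ref{t.epsRSW}, the FKG inequality of Theorem~\ref{t.FKG}, and the quasi-independence results of~\cite{rv_rsw}---yields an unconditional upper bound of the form $R^{-\eta'}$ for some $\eta'=\eta'(\kappa)>0$.

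It remains to transfer the bound from the unconditional law to the law conditional on $\{f(x)=-p\}$. The conditional distribution of $(f(y))_{y\neq x}$ given $f(x)=-p$ is Gaussian with mean $y\mapsto -p\,\kappa(x-y)$ and covariance $(y,z)\mapsto\kappa(y-z)-\kappa(x-y)\kappa(x-z)$. For $p\in[-1,1]$ this is a rank-one, localized perturbation of the original field: both the mean shift and the covariance modification are uniformly bounded by $1$, and by Condition~\ref{a.pol_decay} with $\alpha>4$ they decay like $|x-y|^{-\alpha}$ as one moves away from $x$. The plan is to verify that the hypotheses feeding into Proposition~\polynomially{} of~\cite{rv_rsw}---namely the regularity of $\kappa$, an RSW-type lower bound on crossing probabilities, and quasi-independence at macroscopic distances---are all stable under such a perturbation, with constants uniform in $p\in[-1,1]$ and in the position of $x$. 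The main obstacle is that the conditional covariance need not be pointwise non-negative, so the Gaussian FKG inequality of Theorem~\ref{t.FKG} does not apply directly to the conditional field. One circumvents this by using the fact that the perturbation has polynomially small effect on the correlation structure at the macroscopic distances relevant for the arm events, so that FKG and the quasi-independence estimates only need to be applied to the unperturbed field, with the perturbation absorbed into a multiplicative constant via a Gaussian comparison.
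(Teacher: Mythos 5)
Your first reduction is workable in spirit but already deviates in a way that matters: by passing to \emph{alternating} four-arm (or boundary three/two-arm) events you produce an event which is neither increasing nor decreasing, and the arm estimate actually available (Proposition~\ref{p.arm}, i.e.\ Proposition~\polynomially{} of~\cite{rv_rsw} as used here) is a bound on \emph{single-colour one-arm} events \emph{at level $p=0$ only}. Since an alternating arm event is not monotone in $p$, you have no way to compare its probability at level $p$ with the level-$0$ bound, and the same lack of monotonicity blocks the conditioning step below. The paper keeps things monotone on purpose: for $p\geq 0$ it only retains from pivotality \emph{one white arm} from $\partial B(x,1)$ to $\partial B(x,R/2)$ (for $p\leq 0$, one black arm), an event which is decreasing, independent of $f(x)$, and monotone in $p$, so that $\Pro[\arm^{\eps,*}_p]\leq\Pro[\arm^{\eps,*}_0]$ and Proposition~\ref{p.arm} applies directly.

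The genuine gap is the transfer from the unconditional to the conditional law. Your plan --- view the law of $(f(y))_{y\neq x}$ given $f(x)=-p$ as a ``localized perturbation'' and absorb it into a constant by a Gaussian comparison, re-running RSW, FKG and quasi-independence for the perturbed field --- is not a proof and would be hard to make one: the conditional mean shift $-p\,\kappa(x-\cdot)$ is of order $1$ precisely where the arms start (at the neighbours of $x$, at distance $\eps\leq 1$), the conditional field is neither centered, nor stationary, nor positively correlated, so none of Theorem~\ref{t.FKG}, Theorem~\ref{t.epsRSW} or the quasi-independence results of~\cite{rv_rsw} apply to it, and no quantitative comparison lemma is offered that would control arm probabilities under such a perturbation. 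The paper sidesteps all of this with an elementary monotonicity argument: since the covariances $\kappa(x-y)$ are non-negative, conditioning a decreasing event on $\{f(x)=q\}$ gives a probability non-increasing in $q$ (Lemma~\ref{l.mon.conditional}), hence by Corollary~\ref{c.mon.conditional}
\[
\Pro\bigl[\arm^{\eps,*}_p(x,R/2)\cond f(x)=-p\bigr]\;\leq\;\Pro\bigl[\arm^{\eps,*}_p(x,R/2)\cond f(x)\leq -p\bigr]\;\leq\;a^{-1}\,\Pro\bigl[\arm^{\eps,*}_p(x,R/2)\bigr]
\]
with $a=\Pro[Z\leq -1]$ (using $|p|\leq 1$), and then monotonicity in $p$ reduces to level $0$. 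This is exactly why the single-colour, monotone arm event is chosen according to the sign of $p$; to repair your argument you would essentially have to abandon the alternating-arm reduction and the perturbative comparison and re-derive this monotone-conditioning route.
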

Note that the constants in Proposition~\ref{p.infl_decr} do not depend on $p$ (as long $p$ belongs to a bounded interval, in our proof we have chosen $[-1,1]$).

\subsubsection{A bound on the infinite norm of the square root matrix}\label{sss.square_root}

In order to use Theorem~\ref{t.KMSnonproduct}, it is important to control the quantity $\|\sqrt{\Sigma}\|_{\infty,op}$. We are interested in the case $\Sigma=K_R^\eps:=$ the correlation matrix $K$ restricted to $\calV^\eps_R$ (see Subsubsection~\ref{sss.pol} the notation $\calV^\eps_R$). However, we were not able to estimate $\| \sqrt{K^\eps_R} \|_{\infty,op}$. Instead, we obtained estimates on $\| \sqrt{K^\eps} \|_{\infty,op}$ where $K^\eps$ is $K$ restricted to $\mathcal{V}^\eps$, by using Fourier techniques (which work only in a translation invariant setting, hence not for $K^\eps_R$). Here, $K^\eps$ is an infinite matrix, so let us be more precise about what we mean by square root of $K^\eps$: To define the product of two infinite matrices, we ask the infinite sums that arise to be absolutely convergent. A square root of $K^\eps$ is simply an infinite matrix $\sqrt{K^\eps}$ such that in this sense we have $\sqrt{K^\eps}^2 = K^\eps$. It may be that classical spectral theory arguments imply that such a square root exists. However, we will not need such arguments since we will have to construct quite explicitly $\sqrt{K^\eps}$ in order to estimate its infinite operator norm (where as in the finite dimensional case we let $\| \sqrt{K^\eps} \|_{\infty,op} := \underset{i}{\text{\textup{max}}} \sum_{j} |\sqrt{K^\eps}(i,j)|$, one difference being that this sum might be infinite). Let us also note that the matrices $\sqrt{K^\eps}$ that we will construct will be positive definite in the sense that their restriction to finite sets of indices is positive definite. By spectral theory arguments it may be that only one such matrix exists. However, we do not need such a result either.\\

Note that, since Theorem~\ref{t.KMSnonproduct} is stated for finite dimensional Gaussian fields, it would have been better to have an estimate on $\| \sqrt{K^\eps_R} \|_{\infty,op}$. See Lemma~\ref{l.dg_R/dp} for more details on this issue.\\

We will prove the following in Section \ref{s.sqrt}. The reason why our main result Theorem~\ref{t.main} is only proved for the Bargmann-Fock model is that we were unable to find a general estimate on $\|\sqrt{K^\eps}\|_{\infty,op}$ with explicit dependence on $\eps$ as $\eps\downarrow 0$ (this is the only place where we prove a result only for the Bargmann-Fock field). If one could prove a bound by $C \eps^{-2+\delta}$ for some $\delta > 0$ and $C<+\infty$ for another process (which also satisfies some of the conditions of Subsection~\ref{ss.cond}), the rest of the proof of Theorem~\ref{t.main} would piece together and yield the same result for this process. See Proposition~\ref{p.mainmoregnrl}.


\begin{prop}\label{p.square_root_estimate}
Assume that $f$ satisfies Condition~\ref{a.super-std} and Condition~\ref{a.fourier} for some $\alpha>5$. Then, for all $\eps>0$, there exists a symmetric square root $\sqrt{K^\eps}$ of $K^\eps$ such that:
\[
\| \sqrt{K^\eps} \|_{\infty,op} <+\infty\, .
\]
Assume further that $\kappa(x)=\exp \left( -\frac{1}{2}|x|^2 \right)$ (which is the covariance kernel function of the Bargmann-Fock field). Then, there exist $\eps_0>0$ and $C<+\infty$ such that, for all $\eps\in]0,\eps_0]$, there exists a symmetric square root $\sqrt{K^\eps}$ of $K^\eps$ such that:
\[
\| \sqrt{K^\eps} \|_{\infty,op} \leq C \, \frac{1}{\eps} \, \log\left(\frac{1}{\eps}\right) \, .
\]
\end{prop}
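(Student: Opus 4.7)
The strategy is to diagonalize $K^\eps$ by Fourier analysis on the lattice $\calV^\eps$, which (as noted in the paper) is a rotated and scaled copy of $\Z^2$. Functions on $\calV^\eps$ have a Fourier representation on the dual torus $T^*_\eps := \R^2/(\calV^\eps)^*$, whose volume equals $\text{covol}(\calV^\eps)^{-1}$, proportional to $\eps^{-2}$. Since $K^\eps$ is the convolution operator with kernel $\kappa|_{\calV^\eps}$, its Fourier symbol is
\[
m^\eps(\xi) \, := \, \sum_{x \in \calV^\eps} \kappa(x) \, e^{-2\pi i \xi \cdot x} \, ,
\]
which by the Poisson summation formula (valid because Condition~\ref{a.fourier} ensures that $\kappa$ and its derivatives are integrable) equals
\[
m^\eps(\xi) \, = \, \frac{1}{\text{covol}(\calV^\eps)} \sum_{n \in (\calV^\eps)^*} \hat\kappa(\xi - n) \, .
\]
Since $\hat\kappa > 0$ by Condition~\ref{a.fourier}, we have $m^\eps > 0$ everywhere. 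I then define the symmetric square root by
\[
\sqrt{K^\eps}(0,x) \, := \, \text{covol}(\calV^\eps) \int_{T^*_\eps} \sqrt{m^\eps(\xi)} \, e^{2\pi i \xi \cdot x} \, d\xi \qquad (x \in \calV^\eps) \, .
\]
This kernel is real and even in $x$ (since $m^\eps$ is real and even), so the resulting Toeplitz-type matrix is symmetric and satisfies $\sqrt{K^\eps} \cdot \sqrt{K^\eps} = K^\eps$ in the convolution sense. By translation invariance, $\|\sqrt{K^\eps}\|_{\infty,op} = \sum_{x \in \calV^\eps} |\sqrt{K^\eps}(0,x)|$.

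For Part~1, the polynomial decay $|\partial^\beta \kappa(x)| \leq C |x|^{-\alpha}$ with $\alpha > 5$ for $|\beta| \leq 3$ yields $\sum_{x \in \calV^\eps} (1+|x|)^3 |\kappa(x)| < \infty$, whence $m^\eps \in C^3(T^*_\eps)$. Since $m^\eps$ is positive and $C^3$, the composition $\sqrt{m^\eps}$ is also $C^3$. Using Cauchy-Schwarz with a polynomial weight, for any $s > 1$,
\[
\sum_{x \in \calV^\eps} |\sqrt{K^\eps}(0,x)| \, \leq \, \Bigl( \sum_{x \in \calV^\eps} (1+|x|)^{-2s} \Bigr)^{1/2} \Bigl( \sum_{x \in \calV^\eps} (1+|x|)^{2s} |\sqrt{K^\eps}(0,x)|^2 \Bigr)^{1/2} \, ,
\]
the first factor being finite (since $2s > 2$ in dimension 2), and the second controlled via Plancherel by the $H^s$-norm of $\sqrt{m^\eps}$ on $T^*_\eps$, which is finite for $s \leq 3$. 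Taking for instance $s = 3/2$ suffices, and Part~1 follows.

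For Part~2, with $\kappa(x) = e^{-|x|^2/2}$ we have $\hat\kappa(\xi) = 2\pi e^{-2\pi^2|\xi|^2}$, itself a Gaussian. Within a fundamental domain of $(\calV^\eps)^*$ of diameter of order $1/\eps$, every term $\hat\kappa(\xi - n)$ with $n \neq 0$ contributes at most $O(e^{-c/\eps^2})$, so
\[
\sqrt{m^\eps(\xi)} \, = \, \text{covol}(\calV^\eps)^{-1/2} \, \sqrt{\hat\kappa(\xi)} \, \bigl( 1 + O(e^{-c/\eps^2}) \bigr) \, .
\]
The main term is a rescaled Gaussian whose inverse Fourier transform is computable in closed form, giving $\sqrt{K^\eps}(0,x) \approx \eps \pi^{-1/2} e^{-|x|^2}$. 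Summing over $x \in \calV^\eps$, whose density is $2/\eps^2$, produces a contribution of order $\eps^{-1}$. The $\log(1/\eps)$ factor appearing in the statement is slack coming from bounding the correction terms: near the boundary of the fundamental domain $\hat\kappa(\xi)$ is itself exponentially small in $\eps$, so pointwise control of the ratio correction/main term is lost there and one absorbs the discrepancy through a weighted Fourier $\ell^1$ estimate which, when optimized in the cutoff scale, produces a logarithmic overhead.

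The main obstacle is Part~2: one must track the explicit $\eps$-dependence through a Fourier $\ell^1$ estimate, balancing the sharp Gaussian decay of $\sqrt{m^\eps}$ in the bulk against its near-degeneracy at the boundary of the fundamental domain. The smoothness of $\sqrt{m^\eps}$ on $T^*_\eps$ degrades as $\eps \to 0$ because $m^\eps$ becomes exponentially small at the corners of the fundamental domain, and a direct application of the Sobolev argument used in Part~1 would not yield quantitative control. A more delicate splitting into a bulk Gaussian piece, handled by explicit Fourier inversion, plus a boundary correction, bounded using exponential smallness plus the weighted $\ell^1$ estimate, is required; a careful accounting of the truncation scale then yields the $\eps^{-1}\log(1/\eps)$ bound.
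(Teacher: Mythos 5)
Your construction of the square root is the paper's own: diagonalise the translation-invariant matrix $K^\eps$ in Fourier, identify the symbol via the Poisson summation formula, use the positivity of $\hat{\kappa}$ (Condition~\ref{a.fourier}) to take its square root, and read off the kernel as the Fourier coefficients of the square-rooted symbol. Your proof of the first assertion (fixed $\eps$) is correct and essentially identical to Lemma~\ref{l.sqrt.0}: the decay of $\kappa$ and its derivatives for $\alpha>5$ makes the symbol $C^3$ and positive on the compact dual torus, hence its square root is $C^3$, hence its Fourier coefficients are summable.

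For the second assertion, which is the actual content of the proposition, there is a genuine gap. The uniform expansion $\sqrt{m^\eps(\xi)}=\mathrm{covol}(\calV^\eps)^{-1/2}\sqrt{\hat{\kappa}(\xi)}\,\bigl(1+O(e^{-c/\eps^2})\bigr)$ fails near the boundary of the fundamental domain: at a point $\xi$ equidistant from $0$ and a neighbouring dual-lattice point $n$, the terms $\hat{\kappa}(\xi)$ and $\hat{\kappa}(\xi-n)$ are equal, so the relative error is of order $1$, as you yourself concede. Everything then rests on the ``boundary correction, bounded using exponential smallness plus the weighted $\ell^1$ estimate, optimized in the cutoff scale'', which is never formulated, let alone proved. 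The real difficulty --- which you flag (the smoothness of $\sqrt{m^\eps}$ degrades because $m^\eps$ is exponentially small at the corners) but do not resolve --- is to control derivatives of $\sqrt{m^\eps}$ precisely where $m^\eps$ is exponentially small: each differentiation of the square root produces a factor $1/\sqrt{m^\eps}$, exponentially large there, so the Sobolev/Cauchy--Schwarz device of your Part~1 yields no bound with explicit $\eps$-dependence, and even your proposed bulk-plus-correction splitting requires exactly this derivative control to show the correction is harmless. This is the step the paper actually carries out: Lemma~\ref{l.sqrt.2} converts $L^1$ control of $\rho^\eps$, $\Delta\rho^\eps$ and $\partial_i\Delta\rho^\eps$ on $\eps^{-1}\T^2$ (the quantity $\Upsilon(\eps)$) into $\sum_m|\eta_\eps(m)|\leq C\,\Upsilon(\eps)\,\eps^{-1}\log(1/\eps)$ by integration by parts --- note the logarithm there comes from $\sum_{0<|m|\leq\eps^{-1}}|m|^{-2}$, i.e.\ from the bulk of the lattice, not from a boundary cutoff as in your heuristic --- and Lemma~\ref{l.sqrt.3} together with Claim~\ref{cl.algebra} bounds $\Upsilon(\eps)$ uniformly for the Gaussian kernel by writing $\rho^\eps(\xi)^5P_j\rho^\eps(\xi)$ as an explicit triple sum and cancelling the dangerous inverse powers of $\rho^\eps$ with the simultaneous lower bounds $\rho^\eps(\xi)\geq e^{-\frac{1}{4}|\xi-2\pi\eps^{-1}m|^2}$, valid for every $m$. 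Some such quantitative mechanism is indispensable; without it, the bound $C\,\eps^{-1}\log(1/\eps)$ is asserted rather than proved.
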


\subsubsection{Combining all the above results}\label{sss.combining}
The following Equation~\eqref{e.box-crossing_eps} is not used anywhere in the paper though it appears implicitely in Lemma~\ref{l.dg_R/dp}. However, since it sums up the crux of the proof, we state it here as a guide to the reader. Its proof could easily be extracted from the arguments below (see Remark~\ref{r.the_equation_for_constraint}).\\

As we will explain in Section~\ref{ss.mainpf}, by combining the results of Subsubsections~\ref{sss.russo},~\ref{sss.KMS},~\ref{sss.pol} and~\ref{sss.square_root}, we can obtain the following: Consider the Bargmann-Fock field and let $p \in ]0,1]$. There exists $C< +\infty$ such that, if $\eps \in ]0,1]$ and $R \in ]0,+\infty[$, then:
\begin{equation}\label{e.box-crossing_eps}
\Pro \left[ \cross_p^\eps(2R,R) \right] \geq 1 - C_1 \exp \left( -p \, c_2 \, \frac{\eps}{\log(\frac{1}{\eps})} \sqrt{ \log_+ \left( C_1 R^{-\eta} \frac{\log(\frac{1}{\eps})}{\eps} \right) } \right) \, ,
\end{equation}
where $C_1<+\infty$ and $c_2>0$ are some absolute constants. One can deduce from~\eqref{e.box-crossing_eps} that, if $\eps=\eps(R) \geq \log^{-(1/2-\delta)}(R)$, then $\Pro \left[ \cross_p^\eps(2R,R) \right]$ goes to $1$ as $R$ goes to $+\infty$ (sufficiently fast so that $\sum_k \Pro \left[ \cross_p^{\eps(2^k)}(2^{k+1},2^k) \right] < +\infty$ as required, see~\eqref{e.constraint1}).\\

We will also obtain results analogous to Equation~\eqref{e.box-crossing_eps} for more general fields and with $\|\sqrt{K^\eps}\|_{\infty,op}$ instead of $\frac{1}{\eps} \log(\frac{1}{\eps})$, which will enable us to prove~\eqref{e.criterion_eps} and thus obtain Theorem~\ref{t.epsKesten}.


\subsection{From discrete to continuous}\label{ss.from_discrete_to_cont}

It is helpful to keep in mind in this subsection that, as explained above, in the case of the Bargmann-Fock field, the quantity $\Pro \left[ \cross_p^\eps(2R,R) \right]$ goes to $1$ as $R\rightarrow +\infty$ if $\eps = \eps(R) \geq \log^{-(1/2-\delta)}(R)$. In order to obtain an analogous result for the continuum model, we need to measure the extent to which $\Pro \left[ \cross_p^\eps(2R,R) \right]$ approximates $\Pro \left[ \cross_p(2R,R) \right]$. To this purpose, it seems natural to use the approximation estimates based on the discretization schemes of~\cite{bg_16} which have been generalized in~\cite{bm_17}. More precisely, we think that the following can easily be extracted from~\cite{bm_17} (however, we do not write a formal proof since we will not need such a result):\footnote{Let us be a little more precise here: one can use for instance Propositions~6.1 and~6.3 of~\cite{bm_17} and (more classical) results about the regularity of $\calD_p$ as for instance Lemma~$\transversality$ of~\cite{rv_rsw}.} Assume that $f$ satisfies Condition~\ref{a.super-std} and that $\kappa$ is $C^6$. Then, for every $p \in \R$ and $\delta > 0$, there exists $C = C(\kappa,p,\delta) < +\infty$ such that for each $\eps\in]0,1]$ and each $R\in\eps\Z_{>0}$ we have:
\begin{equation}\label{e.discret_bm}
\Pro \left[\cross^\eps_p(2R,R) \bigtriangleup \cross_p(2R,R)\right]\leq C \eps^{2-\delta}R^2\, .
\end{equation}
This would imply in particular that, for every sequence $(\eps(R))_{R>0}$ such that: (i) for each $R>0$, $R \in \eps(R)\Z_{>0}$ and: (ii) $\eps(R) = O(R^{-(1+\delta)})$ for some $\delta > 0$, we have:
\[
\Pro \left[\cross^{\eps(R)}_p(2R,R) \bigtriangleup \cross_p(2R,R)\right] \underset{R \rightarrow +\infty}{\longrightarrow} 0 \, .
\]

Unfortunately, the constraint $\eps \leq R^{-(1+\delta)}$ is incompatible with $\eps \geq \log^{-(1/2-\delta)}(R)$. To solve this quandary, we will replace the discretization result with a sprinkling argument.\\

Let $p > 0$. The idea is to compare the probability of a continuum crossing at parameter $p$ to the probability of a discrete crossing at parameter $p/2$. Instead of using the discretization results of~\cite{bg_16,bm_17}, we will use the following result, which we prove in Subsection~\ref{s.sprinkling}. For references about the strategy of sprinkling in the context of Bernoulli percolation, see the notes on Section~2.6 of~\cite{grimmett1999percolation}.

\begin{prop}\label{p.sprinkling}
Assume that $f$ satisfies Condition~\ref{a.super-std} and that it is a.s. $C^2$. Let $p>0$. Then, there exist constants $C_1<+\infty$ and $c_2=c_2(\kappa,p)>0$ as well as $\eps_0=\eps_0(\kappa,p)>0$ such that for each $\eps \in ]0,\eps_0]$ and $R>1$:
\[\prob\left[\cross^\eps_{p/2}(2R,R) \setminus \cross_p(2R,R) \right]\leq C_1R^2\eps^{-2}\exp(-c_2\eps^{-4}) \, .\]
In particular, for every sequence $(\eps(R))_{R>0}$ such that $\eps(R) \geq \log^{-(1/4+\delta)}(R)$ for some $\delta > 0$, we have:
\[\prob\left[\cross^\eps_{p/2}(2R,R) \setminus \cross_p(2R,R) \right] \underset{R \rightarrow +\infty}{\longrightarrow} 0  \, .\]
\end{prop}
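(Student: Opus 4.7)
The plan is to localize the failure of the sprinkling to a small number of edge-local bad events, and then to use a second-order Taylor expansion combined with Gaussian concentration on the Hessian of $f$ to bound each such event. The overall dependence $\exp(-c_2\,\eps^{-4})$ comes from pairing a $C^2$-Taylor bound $|\mathrm{Hess}\,f|\gtrsim \eps^{-2}$ with the Gaussian tail $\exp(-c\, t^2)$ at $t\sim \eps^{-2}$.

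First I would reduce to a local bad event. Suppose $\cross^\eps_{p/2}(2R,R)$ holds, and fix any discrete black crossing path; along every edge $e$ of this path, the two endpoints $y_1,y_2$ satisfy $f(y_i)\geq -p/2$. If in addition $\min_{x\in e} f(x)\geq -p$ for every such edge, then the path itself traces a continuous curve lying in $\calD_p\cap [0,2R]\times[0,R]$, so $\cross_p(2R,R)$ also holds. Contrapositively, on $\cross^\eps_{p/2}(2R,R)\setminus \cross_p(2R,R)$, at least one edge $e$ of $\calT^\eps$ meeting $[0,2R]\times[0,R]$ must satisfy $f(y_i)\geq -p/2$ at both endpoints yet $\min_{x\in e} f(x)<-p$; call this event $\calB_e$. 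Since the number of such edges is $O(R^2\eps^{-2})$, a union bound reduces matters to showing $\Pro[\calB_e]\leq \exp(-c_2\eps^{-4})$ uniformly in $e$.

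Next I would force the Hessian of $f$ to be anomalously large on the bad edge. Parametrize $e$ by arc length $s\in[0,L]$ with $L\leq \sqrt{2}\,\eps$, and let $G(s):=f(\gamma(s))$. Since $f\in C^2$ a.s., so is $G$, and $|G''(s)|\leq \|\mathrm{Hess}\, f(\gamma(s))\|_{\mathrm{op}}$. On $\calB_e$, $G(0),G(L)\geq -p/2>-p>\min_{[0,L]} G$, so the minimum is attained at some interior $s_*\in (0,L)$ with $G'(s_*)=0$; we may assume $s_*\leq L/2$. A second-order Taylor expansion at $s_*$ yields
\[
-p/2 \;\leq\; G(0) \;=\; G(s_*) + \tfrac{1}{2}\, G''(\xi)\, s_*^2
\]
for some $\xi\in(0,s_*)$, which together with $G(s_*)<-p$ forces $G''(\xi)>p/s_*^2 \geq 4p/L^2 \geq c_\kappa\, p\, \eps^{-2}$, and hence $\sup_{x\in e}\|\mathrm{Hess}\, f(x)\|_{\mathrm{op}}\geq c_\kappa\, p\, \eps^{-2}$. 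I would then finish by Gaussian concentration: since $f$ is $C^2$, Lemma~\ref{l.reg} gives that each $\partial_{ij}f$ is a centered stationary Gaussian field with finite variance; applying Borell-TIS on the compact segment $e$, whose diameter is bounded uniformly in $\eps\in]0,1]$, gives $\Pro\bigl[\sup_{x\in e}|\partial_{ij}f(x)|\geq t\bigr]\leq C\exp(-c\, t^2)$ for $t$ larger than the (uniformly bounded) mean, with $C,c$ depending only on $\kappa$. Using $\|\mathrm{Hess}\|_{\mathrm{op}}\leq C'\max_{i,j}|\partial_{ij}f|$ in dimension two and taking $\eps_0=\eps_0(\kappa,p)$ small enough that $c_\kappa p\eps^{-2}$ exceeds this mean, one gets $\Pro[\calB_e]\leq \exp(-c_2(\kappa,p)\,\eps^{-4})$, and the union bound closes the proof.

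The main obstacle I anticipate is controlling the Borell-TIS mean $\E\bigl[\sup_{x\in e}|\partial_{ij}f(x)|\bigr]$ uniformly in $\eps$: it must stay bounded as $\eps\downarrow 0$ (which it does, since $e$ has diameter $O(\eps)\leq O(1)$, so by stationarity one compares with the supremum over a fixed reference segment of unit length) so that at threshold $t\sim p\,\eps^{-2}$ the Borell-TIS tail is genuinely $\exp(-c\, t^2)$ rather than being swamped by the mean. Everything else is a soft Taylor/union bound argument that does not require any fine control of $\kappa$ beyond $C^4$ regularity, which is implied by $f\in C^2$.
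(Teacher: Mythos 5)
Your argument is correct and is essentially the paper's own proof: you localize to the same edge-wise bad event (the paper's $\fold(e)$ of Proposition~\ref{p.folds}), take a union bound over the $O(R^2\eps^{-2})$ edges, force a second directional derivative of order $p\,\eps^{-2}$ on the bad edge by Taylor's theorem, and conclude with Borell--TIS, handling the mean exactly as the paper does (monotonicity/uniform boundedness of $\E[\sup]$ over the short segment and choosing $\eps_0$ so the threshold dominates it). The only cosmetic difference is that you use a single second-order expansion at the interior minimum where $G'=0$, whereas the paper applies the mean value theorem twice (first to $f$, then to $\partial_v f$); both yield the same $\exp(-c\,\eps^{-4})$ bound.
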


\begin{remark}
While this estimate may seem stronger than~\eqref{e.discret_bm}, we wish to emphasize that it only shows that discretization is legitimate in one direction and also involves a change in threshold from $-p/2$ to $-p$.
\end{remark}

If we choose for instance $\eps=\eps(R)=\log^{-1/3}(R)$, then:
\[
\log^{-(1/2-\delta)}(R) \ll \eps(R) \ll \log^{-(1/4+\delta)}(R)
\]
for any $\delta > 0$ small enough. Hence, with this choice of $\eps$ and for the Bargmann-Fock field, on the one hand for any $p>0$ we have a lower bound on $\Pro \left[ \cross^\eps_{p/2}(2R,R) \right]$ which is close to $1$ (see Subsubsection~\ref{sss.combining}), and on the other hand we can use Proposition~\ref{p.sprinkling} to go back to the continuum and obtain a lower bound on $\Pro \left[ \cross_p(2R,R) \right]$ which is close to $1$. More precisely, we will see in Section~\ref{ss.mainpf} that we can obtain:
\[
\Pro \left[ \cross_p(2R,R) \right] \geq 1- \exp \left( -c \, \log^{1/7}(R) \right) \, ,
\]
for some $c=c(p) > 0$, which implies that the box-crossing criterion of Lemma~\ref{l.criterion} is satisfied, and ends the proof of our main result Theorem~\ref{t.main}.

\section{Proofs of the phase transition theorems and of the exponential decay}\label{s.mainpf}

\subsection{Proof of the phase transition theorems}\label{ss.mainpf}

In this subsection, we combine the results stated in Section~\ref{s.strategy} in order to prove Theorem~\ref{t.main}. We also prove Theorem~\ref{t.epsKesten}. Most of the proof of the former carries over to that of  the latter. The first half of the proof is the following lemma, in which we work with the following function:

\begin{equation}\label{e.defi_g_R}
g_R^\eps : p \longmapsto \log \left(\frac{\Pro \left[ \cross_p^\eps\left(2R,R\right) \right]}{1-\Pro \left[ \cross_p^\eps\left(2R,R\right) \right]} \right) \, .
\end{equation}

\begin{lem}\label{l.dg_R/dp}
There exists an absolute constant $c>0$ such that the following holds: Assume that $f$ satisfies Condition~\ref{a.super-std}, and fix $\eps > 0$ and $R >0$. Moreover, let $\mu_R^\eps$ be the law of $f$ restricted to $\mathcal{V}_R^\eps$ (defined in Subsubsection~\ref{sss.pol}) and let $K^\eps$ be $K$ restricted to $\mathcal{V}^\eps$. Assume that there exists a symmetric square root $\sqrt{K^\eps}$ of $K^\eps$. Then:

\begin{equation}\label{e.formula_dg_R/dp}
\forall p \in \R, \, \frac{d}{dp}g_R^\eps(p) \geq c\, \|\sqrt{K^\eps}\|_{\infty,op}^{-1} \, \, \sqrt{\log_+ \left( \frac{1}{\| \sqrt{K^\eps} \|_{\infty,op} \cdot \underset{x \in \mathcal{V}^\eps_R}{\text{\textup{max}}} I_{x,\mu_R^\eps}\left(\cross_p^\eps(2R,R)\right)} \right)} \, .
\end{equation}
\end{lem}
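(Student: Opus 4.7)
Let $P(p):=\Pro\bigl[\cross_p^\eps(2R,R)\bigr]$. Then $\dfrac{d}{dp}g_R^\eps(p)=\dfrac{P'(p)}{P(p)\bigl(1-P(p)\bigr)}$, and Proposition~\ref{p.russo}, applied to the (non-degenerate by Condition~\ref{a.super-std}) centered Gaussian vector $f|_{\mathcal{V}^\eps_R}$ and the increasing subset $\cross^\eps(2R,R)\subseteq\{0,1\}^{\mathcal{V}^\eps_R}$, gives
\[
P'(p)=\sum_{x\in\mathcal{V}^\eps_R}I_{x,\mu_R^\eps}\bigl(\cross_p^\eps(2R,R)\bigr).
\]
It therefore suffices to prove a KKL-type lower bound on this sum with the prefactor $P(1-P)$ and the displayed logarithmic factor.

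A direct application of Theorem~\ref{t.KMSnonproduct} to $f|_{\mathcal{V}^\eps_R}$ would produce $\|\sqrt{K^\eps_R}\|_{\infty,op}$, which we cannot control (cf.\ Subsubsection~\ref{sss.square_root}). Instead, I rerun the reduction that proves Theorem~\ref{t.KMSnonproduct}, but keep the infinite symmetric square root $S:=\sqrt{K^\eps}$ alive throughout. Write $f=SY$ with $Y=(Y_y)_{y\in\mathcal{V}^\eps}$ a family of i.i.d.\ standard Gaussians; the series $\sum_y S(x,y)Y_y$ converges a.s.\ because $\sum_y|S(x,y)|\leq\|S\|_{\infty,op}<+\infty$. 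The crossing event pulls back to an event $\tilde A\subset\R^{\mathcal{V}^\eps}$ on the product Gaussian space $\mu_Y$, and a change of variables gives the identity
\[
I_{y,\mu_Y}(\tilde A)=I_{v_y,\mu_R^\eps}\bigl(\cross_p^\eps(2R,R)\bigr),\qquad v_y:=\bigl(S(x,y)\bigr)_{x\in\mathcal{V}^\eps_R}.
\]

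Applying the product-Gaussian KKL of Keller--Mossel--Sen (Theorem~\ref{t.KMS}) to $\tilde A$---via a standard truncation to finitely many $Y_y$'s followed by a limiting argument---yields a universal $c>0$ with
\[
\sum_{y\in\mathcal{V}^\eps}I_{y,\mu_Y}(\tilde A)\geq c\,P(p)\bigl(1-P(p)\bigr)\sqrt{\log_+\!\Bigl(1/\max_y I_{y,\mu_Y}(\tilde A)\Bigr)}.
\]
Since $\cross^\eps(2R,R)$ is increasing in $f|_{\mathcal{V}^\eps_R}$, the sub-linearity of geometric influences for monotonic events (Proposition~\ref{p.sublin}, the very ingredient used to prove Theorem~\ref{t.KMSnonproduct}, combined with a decomposition $v_y=v_y^+-v_y^-$ to absorb signs of $S$) gives
\[
I_{v_y,\mu_R^\eps}\bigl(\cross_p^\eps(2R,R)\bigr)\leq\sum_{x\in\mathcal{V}^\eps_R}|S(x,y)|\,I_{x,\mu_R^\eps}\bigl(\cross_p^\eps(2R,R)\bigr).
\]
Interchanging sums and exploiting the symmetry of $S$ to recognize $\sum_{y\in\mathcal{V}^\eps}|S(x,y)|\leq\|\sqrt{K^\eps}\|_{\infty,op}$ transforms the left-hand side of the KMS inequality into $\|\sqrt{K^\eps}\|_{\infty,op}\sum_x I_{x,\mu_R^\eps}(\cross_p^\eps(2R,R))$ and the inner maximum into $\|\sqrt{K^\eps}\|_{\infty,op}\max_x I_{x,\mu_R^\eps}(\cross_p^\eps(2R,R))$. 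Plugging these into the KMS bound and dividing by $\|\sqrt{K^\eps}\|_{\infty,op}\,P(1-P)$ delivers exactly \eqref{e.formula_dg_R/dp}.

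The main obstacle is the infinite-dimensional bookkeeping: justifying almost-sure convergence of $(SY)(x)$ under the single assumption $\|S\|_{\infty,op}<+\infty$, verifying the change-of-variables identity $I_{y,\mu_Y}(\tilde A)=I_{v_y,\mu_R^\eps}(\cdots)$ and carrying it through the truncation limit, and extending Proposition~\ref{p.sublin}---stated in a finite-dimensional setting---to the (possibly signed) directions $v_y$ via the decomposition $v_y=v_y^+-v_y^-$. Each of these steps is technical but routine given the tools assembled in Section~\ref{s.KMS}.
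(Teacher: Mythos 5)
Your proposal is correct and follows essentially the same route as the paper: Russo's formula (Proposition~\ref{p.russo}) for the derivative, reduction to the product Gaussian case via the symmetric square root, the Keller--Mossel--Sen inequality, sub-linearity of influences (Proposition~\ref{p.sublin}) together with the symmetry of $\sqrt{K^\eps}$ to convert column sums into $\|\sqrt{K^\eps}\|_{\infty,op}$, and a truncation-plus-limit step to handle the infinite index set. The only organizational difference is that the paper truncates first --- it applies the already-proven finite-dimensional Theorem~\ref{t.KMSnonproduct} to the vectors $Y^{\eps,\rho}\sim\calN(0,(H^{\eps,\rho})^2)$ and then lets $\rho\to+\infty$ --- and be aware that the step you call routine (convergence of the influences in the limit) is where the paper does genuine work, by first rewriting each influence as a conditional probability via Proposition~\ref{p.russo} and then invoking continuity of Gaussian conditional laws in the covariance (Lemma~\ref{l.cond.cont}).
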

Here, square root and infinite norm of an infinite matrix have the same meaning as in Subsubsection~\ref{sss.square_root}.\\

This lemma is mostly a consequence of Theorem~\ref{t.KMSnonproduct}. The only difficulty comes from the fact that, while Theorem~\ref{t.KMSnonproduct} deals with finite dimensional Gaussian fields, the correlation matrix on the right hand side of Equation~\eqref{e.formula_dg_R/dp} is the infinite matrix $K^\eps$ (which we need to apply Proposition~\ref{p.square_root_estimate} later in this section). To overcome such issues, we proceed by approximation and, instead of dealing with a $\mu_R^\eps$ variable directly, we apply Theorem~\ref{t.KMSnonproduct} to a Gaussian vector defined using $\sqrt{K^\eps}$ (namely, the Gaussian vector $Y^{\eps,\rho}$ below).

\begin{proof}[Proof of Lemma~\ref{l.dg_R/dp}]
Fix $\eps>0$ and $R > 0$. For each $\rho>0$, let $H^{\eps,\rho}$ be $\sqrt{K^\eps}$ restricted to $\calV^\eps \cap [-\rho,\rho]^2$, let $K^{\eps,\rho}=(H^{\eps,\rho})^2$, and let $Y^{\eps,\rho} \sim \calN(0,K^{\eps,\rho}) =: \mu^{\eps,\rho}$. A simple computation shows that since $K^\eps$ is non-degenerate,\footnote{Meaning that for any non-zero finitely supported $v\in \R^{\calV^\eps}$, $K^\eps v\neq 0$.} so is its square root. Restriction preserves non-degeneracy so $Y^{\eps,\rho}$ is indeed non-degenerate and we can apply Theorem~\ref{t.KMSnonproduct} to $\mu^{\eps,\rho}$ and the (increasing) event $\cross_p^\eps(2R,R)$. We obtain that, if $\rho$ is sufficiently large:
\begin{multline}\label{e.dg_R/dp.1}
\sum_{i\in\calV^\eps_R}I_{i,\mu^{\eps,\rho}}\left(\cross_p^\eps\left(2R,R\right)\right)\geq c \|H^{\eps,\rho}\|_{\infty,op}^{-1} \, \mu^{\eps,\rho}\left(\cross_p^\eps\left(2R,R\right)\right)\left(1-\mu^{\eps,\rho}\left(\cross_p^\eps\left(2R,R\right)\right)\right)\\
\times \sqrt{\log_+\left(\frac{1}{\|H^{\eps,\rho}\|_{\infty,op}\max_{i\in\calV^\eps_R}I_{i,\mu^{\eps,\rho}}(\cross_p^\eps\left(2R,R\right))}\right)}\, .
\end{multline}

Here we use the fact that, since $\cross_p^\eps\left(2R,R\right)$ depends only on the sites of $\calV^\eps_R$, the influences on this event of all the sites outside of this box vanish. Now, observe that:


\[
\frac{d}{dp}g_R^\eps(p) = \frac{d}{dp} \Pro \left[ \cross_p^\eps(2R,R) \right] \cdot \frac{1}{\Pro \left[ \cross_p^\eps(2R,R) \right] \, \left( 1-\Pro \left[ \cross_p^\eps(2R,R) \right] \right)} \, .
\]

By Proposition~\ref{p.russo},
\begin{equation}\label{e.dg_R/dp.2}
\frac{d}{dp}g_R^\eps(p) = \frac{1}{\Pro \left[ \cross_p^\eps(2R,R) \right] \, \left( 1-\Pro \left[ \cross_p^\eps(2R,R) \right] \right)}\sum_{i\in\calV^\eps_R}I_{i,\mu_R^\eps}\left(\cross_p^\eps\left(2R,R\right)\right)\, .
\end{equation}


Also, by definition of $\|\cdot\|_\infty$, for every $\rho$, we have:

\begin{equation}\label{e.dg_R/dp.3}
\|H^{\eps,\rho}\|_{\infty,op} \leq  \|\sqrt{K^\eps}\|_{\infty,op} \, .
\end{equation} 

In view of Equations~\eqref{e.dg_R/dp.1},~\eqref{e.dg_R/dp.2} and~\eqref{e.dg_R/dp.3}, we are done as long as we prove that:
\bi 
\item[1.] $\mu^{\eps,\rho} \left( \cross_p^\eps(2R,R) \right)  \underset{\rho \rightarrow +\infty}{\longrightarrow} \mu_R^\eps \left( \cross_p^\eps(2R,R) \right) ( = \Pro \left[ \cross^\eps_p(2R,R) \right])$ and:
\item[2.] $\forall i \in \calV_R^\eps, \, I_{i,\mu^{\eps,\rho}}\left(\cross_p^\eps\left(2R,R\right)\right) \underset{\rho \rightarrow +\infty}{\longrightarrow} I_{i,\mu_R^\eps}\left(\cross_p^\eps\left(2R,R\right)\right)\, .$
\ei
To this purpose, we need the following elementary lemma:
\begin{lemma}\label{l.cond.cont}
Let $(Y_\rho)_{\rho>0}$ be a sequence of non-degenerate Gaussian vectors in $\R^n$ with covariance $\Sigma_\rho$ and mean $m_\rho$, assume that $\Sigma_\rho$ converges to some invertible matrix $\Sigma$ as $\rho$ goes to $+\infty$, and that $m_\rho$ converges to some $m \in \R^n$ as $\rho$ goes to $+\infty$. Let $Y \sim \mathcal{N}(m,\Sigma) \,$. Then, for any Borel subset $A\subset\R^n$, any $i\in\{1,\cdots,n\}$ and any $q\in\R$, we have:
\begin{align*}
\prob[Y_\rho\in A]&\underset{\rho \rightarrow +\infty}{\longrightarrow}\prob[Y\in A] \, ,\\
\prob \left[Y_\rho\in A \cond Y_\rho(i)=q\right]&\underset{\rho \rightarrow +\infty}{\longrightarrow}\prob\left[Y\in A \cond Y(i)=q\right]\, .
\end{align*}
\end{lemma}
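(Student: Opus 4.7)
The plan is to deduce both convergences from a single fact: if a sequence of non-degenerate Gaussian measures on some $\R^k$ has means and covariances converging to those of a non-degenerate limit, then their Lebesgue densities converge pointwise, hence by Scheff\'e's lemma in total variation, and therefore the measure of every Borel set converges (uniformly in the Borel set).

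First I would treat the unconditional statement. Since each $\Sigma_\rho$ is invertible and $\Sigma_\rho\to\Sigma$ with $\Sigma$ also invertible, the laws of $Y_\rho$ and $Y$ admit Lebesgue densities $\varphi_\rho$ and $\varphi$ on $\R^n$. The explicit Gaussian density formula $\varphi_\rho(x)=(2\pi)^{-n/2}(\det\Sigma_\rho)^{-1/2}\exp\bigl(-\tfrac{1}{2}\langle x-m_\rho,\Sigma_\rho^{-1}(x-m_\rho)\rangle\bigr)$ depends continuously on $(m_\rho,\Sigma_\rho)$ on the open set of pairs (mean, invertible covariance), so $\varphi_\rho(x)\to\varphi(x)$ pointwise for every $x\in\R^n$. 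Since $\varphi_\rho\ge 0$ and $\int\varphi_\rho=\int\varphi=1$, Scheff\'e's lemma gives $\|\varphi_\rho-\varphi\|_{L^1(\R^n)}\to 0$, and hence $\prob[Y_\rho\in A]\to\prob[Y\in A]$ uniformly in Borel sets $A\subseteq\R^n$.

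For the conditional statement I would reduce to the same argument on $\R^{n-1}$. Let $\tilde Y_\rho:=(Y_\rho(j))_{j\ne i}$. The regular conditional law of $\tilde Y_\rho$ given $Y_\rho(i)=q$ is a Gaussian on $\R^{n-1}$ whose mean and covariance are given by the standard Schur-complement formulas, namely $\tilde m_\rho+\tilde\Sigma_\rho(\cdot,i)\,\Sigma_\rho(i,i)^{-1}(q-m_\rho(i))$ and $\tilde\Sigma_\rho-\tilde\Sigma_\rho(\cdot,i)\,\Sigma_\rho(i,i)^{-1}\tilde\Sigma_\rho(i,\cdot)$. These expressions are continuous functions of $(m_\rho,\Sigma_\rho,q)$ as soon as $\Sigma_\rho(i,i)>0$, and the conditional covariance is invertible whenever $\Sigma_\rho$ is (this is the usual statement about Schur complements). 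As $\rho\to+\infty$, the conditional mean and covariance of $\tilde Y_\rho$ given $Y_\rho(i)=q$ thus converge to the corresponding quantities for $\tilde Y$ given $Y(i)=q$, and the limiting covariance is still invertible. Applying the unconditional argument on $\R^{n-1}$ to these conditional Gaussian laws yields their convergence in total variation, which is equivalent to $\prob[Y_\rho\in A\mid Y_\rho(i)=q]\to\prob[Y\in A\mid Y(i)=q]$ for every Borel $A\subseteq\R^n$, since any such $A$ intersected with the hyperplane $\{x_i=q\}$ corresponds (via removing the $i$-th coordinate) to a Borel subset of $\R^{n-1}$.

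I do not expect any genuine obstacle: the lemma is a routine continuity property of non-degenerate Gaussian measures. The only point requiring a small amount of care is to check that, both unconditionally and conditionally, one remains in the regime of invertible covariances for $\rho$ large enough, which is immediate from the assumption that the limit $\Sigma$ is invertible together with the continuity of the determinant.
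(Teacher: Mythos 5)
Your proof is correct and follows essentially the same route as the paper: pointwise convergence of the Gaussian densities for the unconditional statement, and reduction of the conditional statement to the unconditional one via the continuity (and preserved non-degeneracy) of the conditional mean and covariance. The only cosmetic differences are that you invoke Scheff\'e's lemma where the paper uses dominated convergence, and you write out the Schur-complement formulas that the paper obtains by citing Proposition~1.2 of \cite{azws}.
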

\begin{proof}
For the first statement, just notice that the density function of a non-degenerate Gaussian vector is a continuous function of its covariance and of its mean, and apply dominated convergence. By Proposition~1.2 of \cite{azws}, the law of $Y_\rho$ conditioned on the value of $Y_\rho(i)$ is that of a Gaussian vector whose mean and covariance depend continuously on $\Sigma_\rho$ and $m_\rho$. Note also that the law $(Y_\rho(j))_{j \neq i}$ when we condition on $\lbrace Y_\rho(i) = q \rbrace$ (respectively the law $(Y(j))_{j \neq i}$ when we condition on $\lbrace Y(i) = q \rbrace$) is still non-degenerate.\footnote{To see this, complete the vector $e_i$ into an orthogonal basis for $\Sigma_\rho$ (respectively $\Sigma$) and express $Y_\rho$ (respectively $Y$) in this basis.} We conclude by applying the first statement.
\end{proof}
Item~1 above is a direct consequence of the first part of Lemma~\ref{l.cond.cont}. Regarding Item~2, note that, since $\cross_p^\eps\left(2R,R\right)$ is an increasing threshold event, by Proposition~\ref{p.russo}, we have the following equalities:
\begin{multline*}
I_{i,\mu^{\eps,\rho}}\left(\cross_p^\eps\left(2R,R\right)\right)\\
=\Pro \left[ Y^{\eps,\rho} \in \piv_i\left(\cross_p^\eps\left(2R,R\right)\right) \cond Y^{\eps,\rho}(i)=-p\right]\frac{1}{\sqrt{2\pi K^{\eps,\rho}(i,i)}} e^{-p^2/(2\, K^{\eps,\rho}(i,i))}\, ,
\end{multline*}
and:
\begin{multline*}
I_{i,\mu_R^\eps}\left(\cross_p^\eps\left(2R,R\right)\right)\\
=\Pro \left[ Y_R^\eps \in \piv_i\left(\cross_p^\eps\left(2R,R\right)\right) \cond Y_R^\eps(i)=-p\right]\frac{1}{\sqrt{2\pi K_{R}^\eps(i,i)}}e^{-p^2/(2\, K_{R}^\eps(i,i))}\, ,
\end{multline*}
where $Y^\eps_R \sim \calN(0,K^\eps_R)$. We conclude by applying the second part of Lemma~\ref{l.cond.cont} (to $Y^{\eps,\rho}$ restricted to $\mathcal{V}_R^\eps$ and to $K^\eps_R$).
\end{proof}

We are now ready to prove our main result.

\begin{proof}[Proof of Theorem \ref{t.main}]
We consider the Bargmann-Fock field $f$. By Theorem~\ref{t.c_RSW}, for every $p\leq 0$, a.s. there is no unbounded connected component in $\calD_p$. We henceforth consider some parameter $p_0 \in ]0,1]$. By Lemma~\ref{l.criterion}, it is enough to prove that $f$ satisfies criterion~\eqref{e.criterion} (note that if we prove that this criterion is satisfied for every $p_0 \in ]0,1]$ then we obtain the result for every $p_0$ since the quantities $\Pro \left[ \neg \cross_{p_0}(2R,R) \right]$ are non-increasing in $p_0$). To this end we first fix $R_0<+\infty$ to be determined later, we let $R>R_0$, and we use the discretization procedure introduced in Subsection~\ref{ss.discretization} with:
\[
\eps=\eps(R)=\log^{-1/3}(R) \, .
\]
Let $g_R:=g_R^{\eps(R)}$ be as in~\eqref{e.defi_g_R}. We are going to apply Lemma~\ref{l.dg_R/dp}. Let us estimate the quantities that appear in this lemma. By Proposition~\ref{p.square_root_estimate}, there exists a constant $C<+\infty$ independent of $R$ such that:
      \[
      \|\sqrt{K^{\eps(R)}}\|_{\infty,op}\leq C \frac{1}{\eps(R)}\log\left(\frac 1 {\eps(R)}\right) .
      \]
      Moreover by Propositions~\ref{p.russo} and~\ref{p.infl_decr}, if $R_0$ is large enough, there exists $\eta>0$ independent of $R > R_0$ and of $p \in [-1,1]$ such that:
       \begin{equation}\label{e.infl_in_proof}
      \forall x\in\mathcal{V}^{\eps(R)}_R,\ I_{x,\mu_R^{\eps(R)}}\left(\cross_p^{\eps(R)}(2R,R)\right)\leq R^{-\eta} \, .
      \end{equation}
     In particular, $\| \sqrt{K^{\eps(R)}} \|_{\infty,op} \cdot \underset{x \in \mathcal{V}^{\eps(R)}_R}{\text{\textup{max}}} I_{x,\mu_R^{\eps(R)}}\left(\cross_p^{\eps(R)}(2R,R)\right)<1$ if $R_0$ is large enough, and by applying Lemma~\ref{l.dg_R/dp} we obtain:
      \begin{align*}
        \frac{d}{dp}g_R(p)&\geq \frac{c}{C} \eps(R) \log^{-1}\left(\frac 1 {\eps(R)}\right)\sqrt{\eta\log(R)-\log\left(\frac{1}{C\eps(R)}\right)-\log\left(\log\left(\frac{1}{\eps(R)}\right)\right)}\\
        &\geq \frac{c}{3C}\log^{-1/3}(R)\log^{-1}\left(\log(R)\right)\sqrt{\eta\log(R)-\frac{1}{3}\log\left(\frac{\log(R)}{C}\right)-\log(3\log(\log(R)))}\\
        &\geq \frac{c}{3C}\sqrt{\eta/2}\log^{1/6}(R)\log^{-1}\left(\log(R)\right)\\
        &\geq \frac{c}{C}\sqrt{\eta/2}\log^{1/7}(R) \, .
      \end{align*}
      By integrating from $0$ to $p_0/2$, we get:
      \begin{align*}\log\left(\frac{1}{1-\Pro\left[\cross_{p_0/2}^{\eps(R)}(2R,R)\right]}\right)&\geq\log\left(\frac{\Pro\left[\cross_{p_0/2}^{\eps(R)}(2R,R)\right]}{1-\Pro\left[\cross_{p_0/2}^{\eps(R)}(2R,R)\right]}\right)\\
        &\geq \frac{p_0}{2} \frac{c}{C}\sqrt{\eta/2}\log^{1/7}(R)+\log
        \left(\frac{\Pro\left[\cross_0^{\eps(R)}(2R,R)\right]}{1-\Pro\left[\cross_0^{\eps(R)}(2R,R)\right]}\right) \, .
      \end{align*}
     By Theorem~\ref{t.epsRSW}, if $R_0$ is large enough, there exists $C'<+\infty$ independent of $R>R_0$ such that:
      \[\log\left(\frac{\Pro\left[\cross_0^{\eps(R)}(2R,R)\right]}{1-\Pro\left[\cross_0^{\eps(R)}(2R,R)\right]}\right)\geq -C'\,.\]
      Therefore:
      \begin{equation}\label{e.mainpf.subexp}
        \Pro\left[\cross_{p_0/2}^{\eps(R)}(2R,R)\right]\geq1-\exp\left(-\frac{c}{2C}\sqrt{\eta/2} \, p_0\log^{1/7}(R)+C'\right)\,.
      \end{equation}
      Now:
      \[\Pro\left[\cross_{p_0}(2R,R)\right]\geq\Pro\left[\cross_{p_0/2}^{\eps(R)}(2R,R)\right]-\Pro\left[\cross_{p_0/2}^{\eps(R)}(2R,R)\setminus[\cross_{p_0}(2R,R)\right] \, ,\]
and by Proposition~\ref{p.sprinkling}, if $R_0$ is large enough, there exist constants $c_1>0$ and $C_2=C_2(p_0)<+\infty$ indepedent of $R>R_0$ such that:
        \begin{align*}
          \Pro\left[\cross_{p_0/2}^{\eps(R)}(2R,R)\setminus[\cross_{p_0}(2R,R)\right]&\leq C_2R^2\eps(R)^{-2}\exp\left(-c_1\eps(R)^{-4}\right)\\
            &\leq C_2\exp\left(2\log(R)+(2/3)\log(\log(R))-c_1\log^{4/3}(R)\right)\\
            &\leq C_2\exp\left(-\frac{c_1}{2}\log^{4/3}(R)\right).
        \end{align*}
        Combining this estimate with Equation~\eqref{e.mainpf.subexp} we get:
        \[\Pro\left[\cross_{p_0}(2R,R)\right]\geq 1-\exp\left(-\frac{c}{2C}\sqrt{\eta/2} \, p_0\log^{1/7}(R)+C'\right)-C_2\exp\left(-\frac{c_1}{2}\log^{4/3}(R)\right).\,\]
        All in all, for a large enough choice of $R_0$, there exists $c_3=c_3(p_0)>0$ such that for each $R > R_0$,
        \[\Pro\left[\cross_{p_0}(2R,R)\right]\geq 1-\exp\left(-c_3\log^{1/7}(R)\right)\,.\]
        Hence, criterion~\ref{e.criterion} is satisfied and we are done.
        \end{proof}
        
\begin{remark}
Note that this proof also implies that:
\begin{equation}\label{e.cross.decay}
\forall p>0,\, \lim_{R\rightarrow+\infty}\cross_p(2R,R)=1\, ,
\end{equation}
which will be useful for the proof of Theorem \ref{t.exp}.
\end{remark}

\begin{remark}\label{r.the_equation_for_constraint}
Note that if we follow the above proof without taking $\eps=\log^{-1/3}(R)$  but by taking any $\eps \in ]0,1]$ and $R$ larger than some constant that does not depend on $\eps$, we obtain~\eqref{e.box-crossing_eps}.
\end{remark}

The only result that we have used in the proof of Theorem~\ref{t.main} and that we have managed to prove only for the Bargmann-Fock field is the $\frac{1}{\eps}\log\left(\frac 1 \eps\right)$ estimate on the infinite operator norm of $\sqrt{K^\eps}$. We have the following more general result:

\begin{prop}\label{p.mainmoregnrl}
Assume that $f$ satisfies Conditions~\ref{a.super-std},~\ref{a.std},~\ref{a.decay} as well as Condition~\ref{a.pol_decay} for some $\alpha > 4$. Assume also that there exist $\delta > 0$ and $C < +\infty$ such that for every $\eps$ sufficiently small $K^\eps$ admits a symmetric square root and:
\begin{equation}\label{e.2-delta}
\|\sqrt{K^\eps}\|_{\infty,op}\leq C \frac{1}{\eps^{2-\delta}} \, .
\end{equation}
Then the critical threshold for the continuous percolation model induced by $f$ is $0$. More precisely, the probability that ${\cal D}_p$ has an unbounded connected component is $1$ if $p>0$, and $0$ if $p\leq 0$. Moreover, in the case where $p>0$, the component is a.s. unique.
\end{prop}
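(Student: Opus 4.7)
The plan is to follow the proof of Theorem~\ref{t.main} essentially verbatim, changing only the choice of discretization scale $\eps(R)$ to accommodate the weaker hypothesis~\eqref{e.2-delta}. The case $p \leq 0$ is immediate from Theorem~\ref{t.c_RSW}, whose hypotheses hold under Conditions~\ref{a.super-std}, \ref{a.std}, \ref{a.decay}, and \ref{a.pol_decay} with $\alpha > 4$. So fix $p_0 \in (0,1]$; I will verify the summability criterion~\eqref{e.criterion} of Lemma~\ref{l.criterion}, which together with uniqueness (also provided by that lemma) gives the result.

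The crux is to balance two competing requirements on $\eps(R)$. On the sprinkling side, Proposition~\ref{p.sprinkling} produces a useful bound only when the error $R^2 \eps^{-2} \exp(-c_2 \eps^{-4})$ decays; writing $\eps(R) = \log^{-a}(R)$, this forces $a > 1/4$. On the KKL side, combining Lemma~\ref{l.dg_R/dp}, the hypothesis $\|\sqrt{K^\eps}\|_{\infty,op} \leq C\,\eps^{-(2-\delta)}$, and the polynomial influence decay of Proposition~\ref{p.infl_decr}, one finds, for $R$ large and $p \in [0,1]$,
\begin{equation*}
\frac{d}{dp} g_R^{\eps(R)}(p) \;\geq\; c\,\eps(R)^{2-\delta}\,\sqrt{\eta \log R - O(\log \log R)} \;\gtrsim\; \log^{1/2 - a(2-\delta)}(R),
\end{equation*}
which blows up only if $a < 1/(4-2\delta)$. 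Because $\delta > 0$, the window $\bigl(1/4,\; 1/(4-2\delta)\bigr)$ is non-empty, and I pick any $a$ inside it, for instance $a := \tfrac{1}{2}\bigl(\tfrac{1}{4} + \tfrac{1}{4-2\delta}\bigr)$, so that $\beta := 1/2 - a(2-\delta) > 0$.

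Integrating the differential inequality from $0$ to $p_0/2$ and using Theorem~\ref{t.epsRSW} to bound $g_R^{\eps(R)}(0)$ from below by a constant (since the discrete crossing probability at the self-dual level is uniformly bounded away from $0$ and $1$), I obtain, for $R$ large,
\begin{equation*}
\P\bigl[\cross_{p_0/2}^{\eps(R)}(2R,R)\bigr] \;\geq\; 1 - \exp\bigl(-c\, p_0 \log^{\beta}(R)\bigr).
\end{equation*}
Then Proposition~\ref{p.sprinkling} (which applies since $f$ is a.s.\ $C^2$ thanks to Condition~\ref{a.decay} via Lemma~\ref{l.reg}) yields, using $a > 1/4$,
\begin{equation*}
\P\bigl[\cross_{p_0}(2R,R)\bigr] \;\geq\; 1 - \exp\bigl(-c\, p_0 \log^{\beta}(R)\bigr) - C_1 R^2 \log^{2a}(R)\exp\bigl(-c_2 \log^{4a}(R)\bigr),
\end{equation*}
and both negative terms decay faster than any polynomial of $R$. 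Thus $\sum_{k} \P[\neg\cross_{p_0}(2^{k+1},2^k)] < +\infty$, and Lemma~\ref{l.criterion} concludes.

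There is no genuine obstacle beyond that of Theorem~\ref{t.main}: the proof is a clean transplant once one sees that the non-emptiness of the window $(1/4, 1/(4-2\delta))$ is precisely what the hypothesis $\delta > 0$ in~\eqref{e.2-delta} ensures. The only quantitative check to be careful about is that the inequality $a(2-\delta) < 1/2$ gives the needed strict super-logarithmic decay; the hypothetical borderline bound $\|\sqrt{K^\eps}\|_{\infty,op} = O(\eps^{-2})$ (i.e.\ $\delta = 0$) would close this window and the argument would fail, which explains why some gain is needed.
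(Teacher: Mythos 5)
Your proof is correct and is essentially the paper's own argument: the paper takes $\eps(R)=\log^{-1/4-h}(R)$ with $h>0$ chosen so that $(-1/4-h)(2-\delta)+1/2>0$, which is exactly your window $a\in\left(1/4,\,1/(4-2\delta)\right)$. One harmless slip: the term $\exp\left(-c\,p_0\log^{\beta}(R)\right)$ with $\beta<1$ does not decay faster than every polynomial in $R$, but it is still summable along $R=2^k$, which is all that Lemma~\ref{l.criterion} requires.
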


\begin{remark}
A way to prove that~\eqref{e.2-delta} is to use our results of Section~\ref{s.sqrt} (where we assume that $f$ satisifies Condition~\ref{a.super-std} and Condition~\ref{a.fourier} for some $\alpha > 5$): Define the quantity $\Upsilon(\eps)$ as in Lemma~\ref{l.sqrt.2}. If one proves that there exists $\delta > 0$ and $C_1<+\infty$ such that, for every $\eps$ sufficiently small, we have $\Upsilon(\eps) \leq C_1 \eps^{-1+\delta}$, then one obtains that there exists $C_2<+\infty$ such that, for every $\eps$ sufficiently small:
\[
\|\sqrt{K^\eps}\|_{\infty,op}\leq C_2 \frac{1}{\eps^{2-\delta}} \log \left( \frac{1}{\eps} \right) \, .
\]
\end{remark}

\begin{proof}[Proof of Proposition~\ref{p.mainmoregnrl}]
The proof is almost identical to that of Theorem~\ref{t.main} so we will not detail elementary computations. First fix some $h>0$ such that:
\begin{equation}\label{e.cond_on_h}
(-1/4-h)(2-\delta)+1/2 > 0 \, .
\end{equation}
By Theorem~\ref{t.c_RSW}, for every $p\leq 0$, a.s. there is no unbounded connected component in $\calD_p$. We henceforth consider some parameter $p_0 \in ]0,1]$. Fix $R_0<+\infty$ to be determined later, and let:
\[
\eps=\eps(R)=\log^{-1/4-h}(R)\, .
\]
By Propositions~\ref{p.russo} and~\ref{p.infl_decr} if $R_0$ is large enough, there exists $\eta>0$ independent of $R> R_0$ and of $p \in [-1,1]$ such that:
\[
\forall x\in\mathcal{V}^{\eps(R)}_R,\ I_{x,\mu_R^{\eps(R)}}\left(\cross_p^{\eps(R)}(2R,R)\right)\leq R^{-\eta} \, .
\]
Hence, by Lemma~\ref{l.dg_R/dp}, if $R_0$ is large enough:
\[
\frac{d}{dp}g_R(p) \geq  \frac{c}{C} \log^{(-1/4-h)(2-\delta)+1/2}(R) \sqrt{\eta/2} \log^{1/2}(R) \, .
\]
By Theorem~\ref{t.epsRSW}, if $R_0$ is large enough, there exists $C'<+\infty$ independent of $R>R_0$ such that:
      \[
      \log\left(\frac{\Pro\left[\cross_0^{\eps(R)}(2R,R)\right]}{1-\Pro\left[\cross_0^{\eps(R)}(2R,R)\right]}\right)\geq -C'\,.
      \]
Integrating from $0$ to $p_0/2$ we get:
 \[
        \Pro\left[\cross_{p_0/2}^{\eps(R)}(2R,R)\right]\geq1-\exp\left(- \frac{c}{2C} \, \frac{p_0}{2} \log^{(-1/4-h)(2-\delta)+1/2}(R) \sqrt{\eta/2}+C'\right)\,.
      \]
Now, if we apply Proposition~\ref{p.sprinkling}, we obtain that, if $R_0$ is large enough, there exist constants $c_1=c_1(\kappa)>0$ and $C_2=C_2(\kappa,p_0)<+\infty$ independent of $R>R_0$ such that:
\begin{align*}
& \Pro \left[ \cross_{p_0}(2R,R) \right]\\
& \geq \Pro \left[ \cross_{p_0/2}^{\eps(R)}(2R,R) \right] - C_2 \exp \left( -c_1 \log^{1+4h}(R) \right)\\
& \geq 1-\exp\left(- \frac{c}{2C} \, p_0 \log^{(-1/4-h)(2-\delta)+1/2}(R) \sqrt{\eta/2}+C'\right) - C_2 \exp \left( -c_1 \log^{1+4h}(R) \right) \, .
\end{align*}
Since (by~\eqref{e.cond_on_h}) we have $(-1/4-h)(2-\delta)+1/2 > 0$, the criterion of Lemma~\ref{l.criterion} is satisfied and we obtain that a.s. ${\cal D}_{p_0}$ has a unique unbounded component.
\end{proof}

The following proof is also almost identical to that of Theorem~\ref{t.main} except that we do not have to use discretization estimates to conclude.

\begin{proof}[Proof of Theorem \ref{t.epsKesten}]
Fix $\eps \in ]0,1]$. By Theorem~\ref{t.cor_bg_eps}, for every $p\leq 0$, a.s., there is no unbounded black component. From now on, we fix some $p_0 \in ]0,1]$. By Lemma~\ref{l.criterion_eps}, it is enough to prove that $f$ satisfies criterion \eqref{e.criterion_eps}. To this end we first fix $R_0>0$ to be determined later and we let $R>R_0$. By Proposition \ref{p.square_root_estimate}, there exists $C=C(\kappa,\eps)<+\infty$ such that:
      \[
            \|\sqrt{K^\eps}\|_{\infty,op}\leq C \, .
            \]
Moreover by Propositions \ref{p.russo} and \ref{p.infl_decr} if $R_0$ is large enough, there exists $\eta>0$ independent of $R> R_0$ and $p \in [-1,1]$ such that:
\[\forall x\in\mathcal{V}^\eps_R,\ I_{x,\mu_R^\eps}\left(\cross_p^\eps(2R,R)\right)\leq R^{-\eta} \, .\]
Hence, by Lemma~\ref{l.dg_R/dp}, if $R_0$ is large enough:
\[
\frac{d}{dp}g_R(p)\geq \frac{c}{C}\sqrt{\frac{\eta}{2}\log(R)} \, .
\]
 By Theorem~\ref{t.epsRSW}, if $R_0$ is large enough, there exists $C'<+\infty$ independent of $R>R_0$ such that:
      \[\log\left(\frac{\Pro\left[\cross_0^\eps(2R,R)\right]}{1-\Pro\left[\cross_0^\eps(2R,R)\right]}\right)\geq -C'\,.\]
Integrating from $0$ to $p_0$ we get:
 \[
        \Pro\left[\cross_{p_0}^\eps(2R,R)\right]\geq1-\exp\left(-\frac{c}{C} p_0 \sqrt{\frac{\eta}{2} \log(R)}+C'\right)\,.
      \]
      Hence, $f$ satisfies criterion~\ref{e.criterion_eps} and we are done.
\end{proof}

\subsection{Proof of the exponential decay}\label{ss.exp}

In this subsection, we prove Theorem~\ref{t.exp} by following classical arguments of planar percolation that involve a recursion on crossing probabilities. In order to start the induction, we use the following property: According to Equation~\eqref{e.cross.decay}, if $f$ is the Bargmann-Fock field, then:

\begin{equation}\label{e.start}
\forall p>0,\, \Pro [\cross_p(2R,R)]\underset{R\rightarrow+\infty}{\longrightarrow}1\, .
\end{equation}

\begin{proof}[Proof of Theorem~\ref{t.exp}]
Let $f$ be the Bargmann-Fock field. Let us prove that for every $p>0$, there exists $c=c(p)>0$ such that for every $R > 0$ we have:

\begin{equation}\label{e.mainpf.1}
\Pro \left[ \cross_p(2 R,R) \right] \geq 1-\exp(-c \, R) \, .
\end{equation}

Theorem~\ref{t.exp} will follow from \eqref{e.mainpf.1}. Indeed, by a simple gluing argument, for every $\rho_1 > 1$ and $\rho_2 > 0$, there exists $N = N(\rho_1,\rho_2) \in \Z_{>0}$ such that, for every $R > 0$, there exist $N$ $\rho_1 R \times R$ rectangles such that if these rectangles are crossed lengthwise then $[0,\rho_2 R] \times [0,R]$ is also crossed lengthwise.\\

In order to prove~\eqref{e.mainpf.1}, we follow classical ideas in planar percolation theory, see for instance the second version of the proof of Theorem~10 in~\cite{bollobas2006percolation} or the proof of Proposition~3.1 in~\cite{ahlberg2016sharpness}. To begin with, define the sequence $(r_k)_{k\geq 0}$ as follows. $r_0=1$ and for each $k\in\N$,
\[
r_{k+1}=2r_k+\sqrt{r_k}\, .
\]
From this definition it easily follows that for each $k$, $r_k\geq 2^k$, which in turn implies that $r_{k+1}\leq(2+2^{-k/2})r_k$. Using this last relation one can easily show that $r_k=O(2^k)$ so that there exists $C<+\infty$ such that for each $k\in\N$,
\begin{equation}\label{e.mainpf.4}
2^k\leq r_k\leq C2^k\, .
\end{equation}
Write:
\[
a_k = 1-\Pro \left[ \cross_p(2r_k,r_k) \right] + \exp(- \frac{r_k}{10}) \, .
\]
We will show that there exists $k_0=k_0(p)<+\infty$ such that for any $k\geq k_0$:
\begin{equation}\label{e.mainpf.2}
a_{k+1}\leq 49 a_k^2\, .
\end{equation}
By Equation~\eqref{e.start}, $\lim_{k\rightarrow+\infty}a_k=0$. This observation combined with Equation~\eqref{e.mainpf.2} yields~\eqref{e.mainpf.1} for $R=r_k$ with $k\geq k_0$ by an elementary induction argument. But since by \eqref{e.mainpf.4}, the sequence $(r_k)_{k\geq 0}$, grows geometrically, one then obtains \eqref{e.mainpf.1} for any $R>0$ by elementary gluing arguments.\\

In order to prove Equation~\eqref{e.mainpf.2} we first introduce two events, see Figure~\ref{f.cross}. First, the event $\fivecross_p(k)$ is the event that:
\begin{itemize}
\item the $2r_k \times r_k$ rectangles $[(ir_k,(i+2)r_k] \times [0,r_k]$ for $i=0,1,2,3$ are crossed from left to right by a continuous path in $\mathcal{D}_p$;
\item the $r_k \times r_k$ squares $[jr_k,(j+1)r_k] \times [0,r_k]$ for $j=1,2,3$ are crossed from top to bottom by a continuous path in $\mathcal{D}_p$.
\end{itemize}
Second, the event $\fivecross'_p(k)$ is the event $\fivecross_p(k)$ translated by $(0,r_k+\sqrt{r_k})$. Note that $\fivecross_p(k) \cup \fivecross'_p(k) \subseteq \cross_p(5r_k,2r_k+\sqrt{r_k})$ Moreover, there exists $k_1<+\infty$ such that for each $k\geq k_1$, $5r_k\geq 2r_{k+1}=4r_{k+1}+2\sqrt{r_k}$ so that $\cross_p(5r_k,2r_k+\sqrt{r_k})\subseteq \cross_p(2r_{k+1},r_{k+1})$. Hence, for each $k\geq k_1$:
\begin{equation}\label{e.mainpf.3}
\Pro \left[ \cross_p(2r_{k+1},r_{k+1}) \right] \geq 1- \Pro \left[ \neg \fivecross_p(k) \cap \neg \fivecross_p'(k) \right] \, .
\end{equation}
\begin{figure}[!h]
\includegraphics[scale=1]{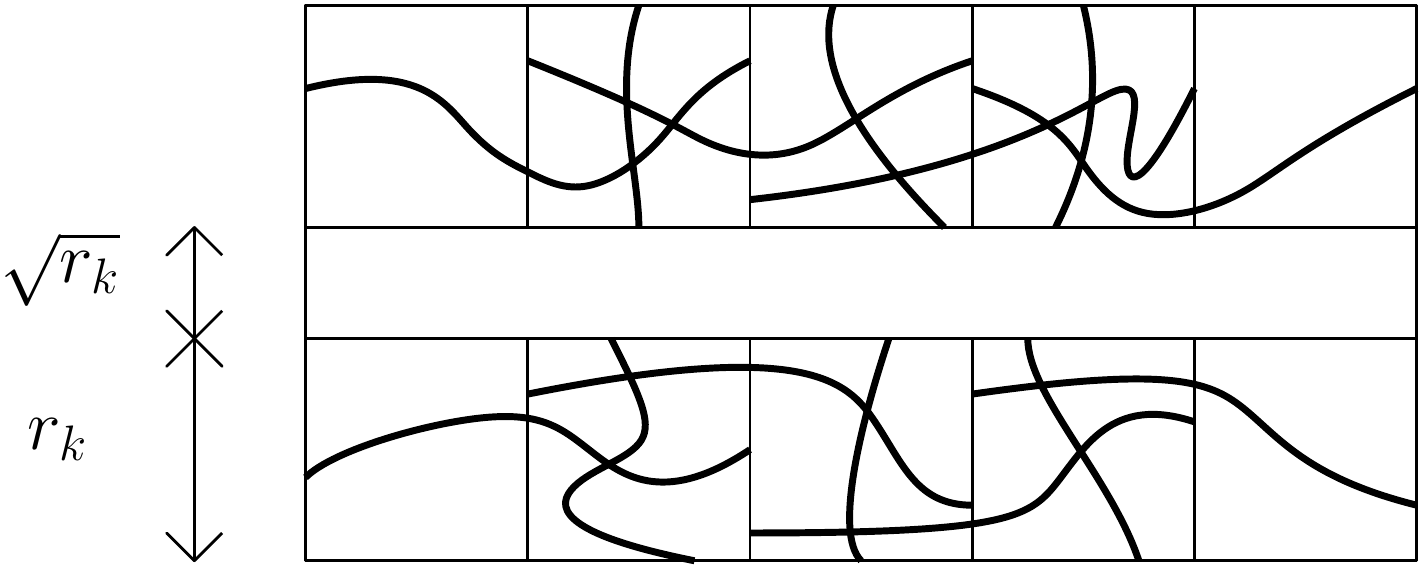}
\caption{The events $\fivecross_p(k)$ and $\fivecross_p'(k)$.}\label{f.cross}
\end{figure}

We claim that the events $\fivecross_p(k)$ and $\fivecross_p'(k)$ are asymptotically independent. More precisely, the following is a direct consequence of Theorem~$\quasiindependence$ from~\cite{rv_rsw}.
\begin{claim}\label{cl.quasi_ind}
There exists $k_2 < +\infty$ such that, for every $p \in \R$ and every $k\geq k_2$:
\begin{multline*}
\left| \Pro \left[ \neg \fivecross_p(k) \cap \neg \fivecross_p'(k) \right] - \Pro \left[ \neg \fivecross_p(k) \right] \, \Pro \left[ \neg \fivecross_p'(k) \right] \right|\\
\leq O(r_k^4) e^{-\frac{\sqrt{r_k}^2}{2}} \leq e^{-\frac{r_k}{4}} \, .
\end{multline*}
\end{claim}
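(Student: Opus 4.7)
The plan is to recognize that Claim~\ref{cl.quasi_ind} is essentially a black-box application of the quasi-independence statement Theorem~\quasiindependence\ of~\cite{rv_rsw} to the pair of events $\neg\fivecross_p(k)$ and $\neg\fivecross_p'(k)$, so the real work is only to verify that the geometric hypotheses of that theorem are met and to plug in the covariance bound for the Bargmann-Fock field.

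First I would identify the supports of the two events. By construction, $\fivecross_p(k)$ is measurable with respect to the restriction of $f$ to the rectangle $S_k := [0,5r_k]\times[0,r_k]$, while $\fivecross'_p(k)$ is its translate by $(0,r_k+\sqrt{r_k})$ and hence is measurable with respect to the rectangle $S_k' := [0,5r_k]\times[r_k+\sqrt{r_k},\,2r_k+\sqrt{r_k}]$. These two rectangles are axis-aligned, each of area $5r_k^2$, and separated (vertically) by a gap of width exactly $\sqrt{r_k}$. Thus their complements are also events of the same measurability class, which is what the quasi-independence theorem takes as input.

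Next I would apply Theorem~\quasiindependence\ of~\cite{rv_rsw}. In its general form it states that, for a stationary Gaussian field satisfying our standing assumptions and for two events $A,B$ supported respectively in bounded regions $U,V\subseteq\R^2$ separated by distance $d$, one has a bound of the form
\[
\left|\Pro[A\cap B]-\Pro[A]\,\Pro[B]\right|\leq C \,\bigl(\area(U)+\area(V)\bigr)^{2} \,\sup_{x\in U,\,y\in V}|\kappa(x-y)|,
\]
(or an analogous polynomial-in-area factor, which is the source of the $O(r_k^4)$). For the Bargmann-Fock field $\kappa(z)=\exp(-\tfrac12|z|^2)$, so for any $x\in S_k$ and $y\in S_k'$ we have $|x-y|\geq\sqrt{r_k}$ and hence $|\kappa(x-y)|\leq \exp\bigl(-\tfrac12(\sqrt{r_k})^2\bigr)=e^{-r_k/2}$. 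Combining the area factor $O(r_k^4)$ with this pointwise covariance bound yields exactly the first inequality of the claim.

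Finally, the second inequality is purely elementary: since $O(r_k^4)\leq e^{r_k/4}$ for all $k$ larger than some absolute $k_2$ (using $r_k\geq 2^k$ by~\eqref{e.mainpf.4}), we have $O(r_k^4)\,e^{-r_k/2}\leq e^{-r_k/4}$ for $k\geq k_2$. The only thing to check carefully is that the version of Theorem~\quasiindependence\ of~\cite{rv_rsw} we cite is indeed formulated for arbitrary (not only increasing) events, since we are applying it to the complements $\neg\fivecross_p(k)$ and $\neg\fivecross'_p(k)$; I expect this to be immediate from the statement there, as that theorem is proved by comparing the joint law of $f|_{S_k\cup S_k'}$ with an independent coupling in total variation, an estimate which is insensitive to monotonicity. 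There is no genuine obstacle; the main point is simply to match the gap $\sqrt{r_k}$ to the Gaussian decay of the covariance so as to get the doubly-exponential small bound.
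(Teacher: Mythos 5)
Your proposal matches the paper's own treatment: the paper proves Claim~\ref{cl.quasi_ind} simply by invoking the quasi-independence theorem (Theorem~\quasiindependence\ of~\cite{rv_rsw}) as a black box, exactly as you do, with the gap $\sqrt{r_k}$ between the two supporting rectangles giving the covariance bound $e^{-\frac{1}{2}(\sqrt{r_k})^2}=e^{-r_k/2}$, a polynomial-in-$r_k$ prefactor from the sizes of the regions, and the elementary absorption of $O(r_k^4)$ into $e^{r_k/4}$ for $k$ large. Your added verifications (identifying the supports, noting the theorem applies to the complements since it is not a monotonicity-based estimate) are consistent with the intended application, so the proof is correct and essentially identical in approach.
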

Let $k \geq \max\{k_1,k_2\}$. Applying Claim~\ref{cl.quasi_ind} in Equation~\eqref{e.mainpf.3}, we get:
\begin{eqnarray*}
\Pro \left[ \cross_p(2r_{k+1},r_{k+1}) \right] & \geq & 1- \Pro \left[ \neg \fivecross_p(k) \right] \cdot \Pro \left[ \neg \fivecross_p'(k) \right] - e^{-\frac{r_k}{4}}\\
& = & 1-\Pro \left[ \neg \fivecross_p(k) \right]^2 -e^{-\frac{r_k}{4}} \text{ (by stationarity).}
\end{eqnarray*}
By a union bound we have:
\[
\Pro \left[ \neg \fivecross_p(k) \right] \leq 4(1-\Pro \left[ \cross_p(2r_k,r_k) \right]) + 3 (1-\Pro \left[ \cross_p(r_k,r_k) \right]) \leq 7(1-\Pro \left[ \cross_p(2r_k,r_k) \right]) \, .
\]
Thus:
\[
1-\Pro \left[ \cross_p(2r_{k+1},r_{k+1}) \right] \leq \big( 7(1-\Pro \left[ \cross_p(2r_k,r_k) \right]) \big)^2 + e^{-\frac{r_k}{4}} \, ,
\]
and:
\begin{eqnarray*}
49(a_k)^2 & \geq & \big( 7(1-\Pro \left[ \cross_p(2r_k,r_k) \right]) \big)^2 + 49e^{-\frac{2r_k}{10}}\\
& \geq & 1-\Pro \left[ \cross_p(2r_{k+1},r_{k+1}) \right] + 49e^{-\frac{2r_k}{10}} - e^{-\frac{r_k}{4}}\\
& \geq & 1-\Pro \left[ \cross_p(2r_{k+1},r_{k+1}) \right] + e^{-\frac{2r_k+\sqrt{r_k}}{10}} \text{ if } k \text{ is sufficiently large}\\
& = & a_{k+1} \, .
\end{eqnarray*}
This is exactly Equation~\eqref{e.mainpf.2}.
\end{proof}

\section{Percolation arguments}\label{s.perco}

In this section we prove Lemma~\ref{l.criterion} and Proposition~\ref{p.infl_decr}. In doing so we also obtain Lemma~\ref{l.criterion_eps}.\\

For every $x\in\R^2$ and every $\rho>0$, we set $B(x,\rho)=x+[-\rho,\rho]^2$.
For every $x\in\R^2$ $0<\rho_1\leq \rho_2$, we set:
\[
\ann(\rho_1,\rho_2)=[-\rho_2,\rho_2]^2\setminus]\rho_1,\rho_1[^2 \, ,
\]
and $\ann(x,\rho_1,\rho_2)=x+\ann(\rho_1,\rho_2)$.
\subsection{Proof of Lemma~\ref{l.criterion}}\label{ss.criterion}

In this subsection, we prove Lemma~\ref{l.criterion}. The proof of Lemma~\ref{l.criterion} also works for Lemma~\ref{l.criterion_eps} with only a few obvious changes. For the proofs of these results, we never use the fact that our Gaussian field is non-degenerate.

\begin{proof}[Proof of Lemma~\ref{l.criterion}]
In this proof, we write $\cross_p(k) = \cross_p(2^{k+1},2^k)$ and we write $\cross_p'(k)$ for the event that there is a continuous path joining the bottom side of the rectangle $[0,2^k] \times [0,2^{k+1}]$ to its top side in $\calD_p\cap [0,2^k] \times [0,2^{k+1}]$. By translation invariance and $\frac{\pi}{2}$-rotation invariance, $\Pro \left[ \cross_p'(k) \right] = \Pro \left[ \cross_p(k) \right]$ for every $k$. Thus:
\[
\sum_{k \in \N} \Pro \left[ \neg \left( \cross_p(k) \cap \cross_p'(k) \right) \right] \leq 2 \sum_{k \in \N} \Pro \left[ \neg \cross_p(k) \right] < +\infty\text{ by assumption.}
\]
Together with Borel-Cantelli lemma, it implies that a.s. there exists $k_0 \in \N$ such that, for every $k \geq k_0$, $\cross_p(k) \cap \cross_p'(k)$ is satisfied. Note that any crossing from left to right of $[0,2^{k+1}] \times [0,2^k]$ and any crossing from top to bottom of $[0,2^{k+1}] \times [0,2^{k+2}]$ must intersect. Similarly, any crossing from top to bottom of $[0,2^k] \times [0,2^{k+1}]$ and any crossing from left to right of $[0,2^{k+2}] \times [0,2^{k+1}]$ must intersect. All these intersecting crossings then form an unbounded connected component in $\mathcal{D}_p$.\\

Let us end the proof by showing that this unbounded component is a.s. unique. The proof follows the same overall structure: First, write $\Circ_p(k)$ for the event that there is a circuit in $\mathcal{D}_p \cap \ann(2^k,2^{k+1})$ surrounding the square $[-2^k,2^k]^2$. Note that, if eight well-chosen $2^{k+1} \times 2^k$ rectangles are crossed lengthwise, two well-chosen $2^k \times 2^k$ squares are crossed from left to right, and two well-chosen $2^k \times 2^k$ squares are crossed from top to bottom, then $\Circ_p(k)$ holds (see Figure~\ref{f.circ} where we already used such a property). Since the probability that a $2^k \times 2^k$ square is crossed is at least equal to the probability that a $2^{k+1} \times 2^k$ rectangle is crossed lengthwise, we get:
\[
\Pro \left[ \neg \Circ_p(k) \right] \leq 12 \Pro \left[ \neg \cross_p(k) \right] \, .
\]
Thus, as before,
\[
\sum_{k\in\N}\Pro \left[ \neg \Circ_p(k) \right]<+\infty \, ,
\]
and Borel-Cantelli lemma implies that a.s. there exists $k_0$ such that, for every $k \geq k_0$, $\Circ_p(k)$ holds. Now, note that, for any unbounded connected component, there exists $k_1$ such that this component crosses the annuli $\ann(2^k,2^{k+1})$ for every $k \geq k_1$. Thus, this component contains any circuit of this annulus for every $k \geq k_0 \vee k_1$. In particular, it must be unique.
\end{proof}

\begin{figure}
\begin{center}
\includegraphics[scale=0.7]{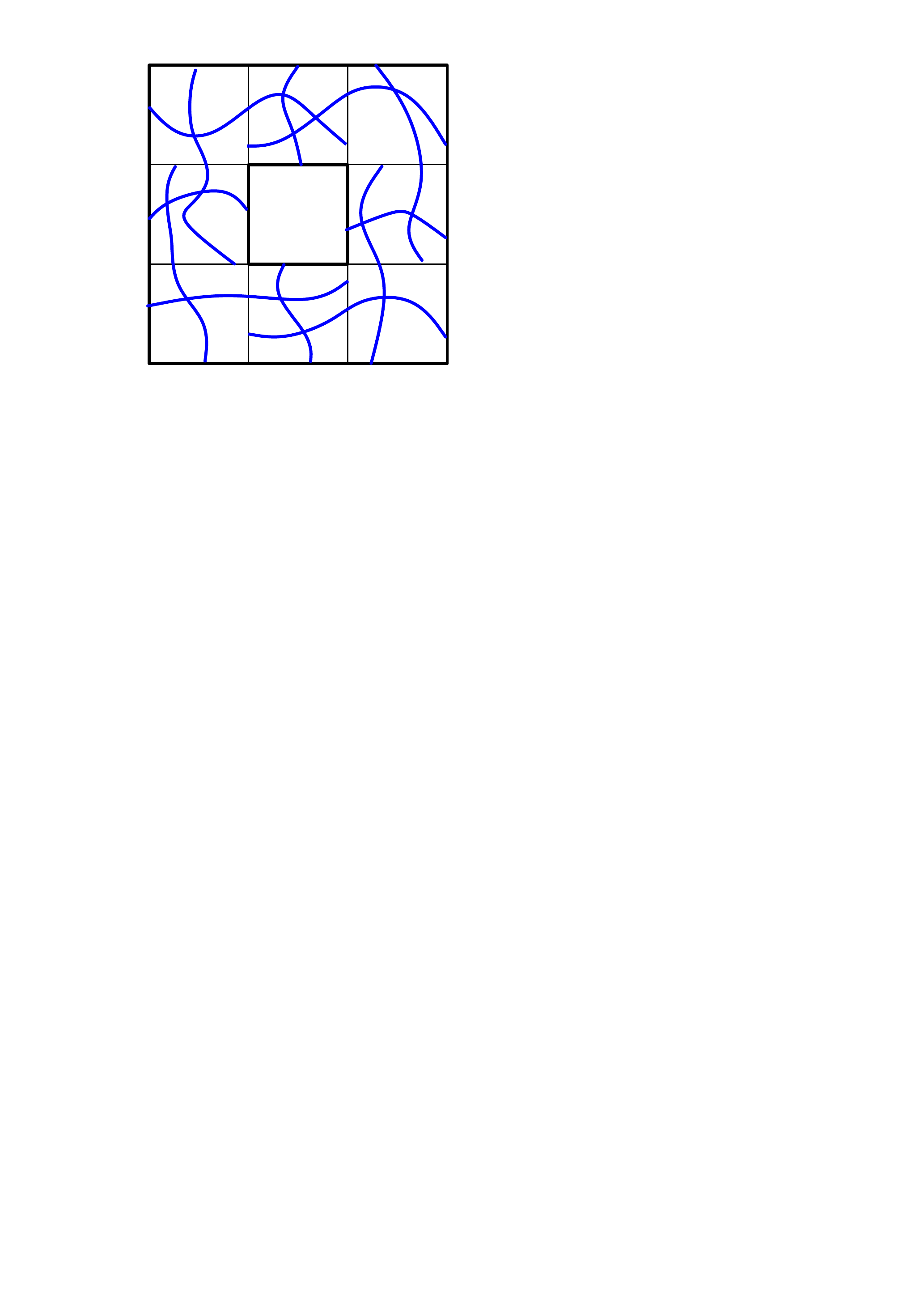}
\end{center}
\caption{If eight $2^{k+1} \times 2^k$ well chosen rectangles and four well chosen $2^k \times 2^k$ squares are crossed, then there is a circuit in $\ann(2^k,2^{k+1})$.}\label{f.circ}
\end{figure}

\subsection{Proof of Proposition \ref{p.infl_decr}}\label{ss.infl_decr}

The goal of this subsection is to prove Proposition~\ref{p.infl_decr}. To this purpose, we use the following result from~\cite{rv_rsw} which is a consequence of the discrete RSW estimate and quasi-independence. Such a result goes back to~\cite{bg_16} for $\alpha > 16$. Let $\eps > 0$ and consider the discrete percolation model defined in Subsection~\ref{ss.discretization}. Let $\arm^\eps_p(R)$ (respectively $\arm^{\eps,*}_p(R)$) be the event that in this model there is a black (respectively white) path in the annulus $\ann(1,R)$ from the inner boundary of this annulus to its outer boundary. We have the following:
\begin{prop}[\cite{bg_16} for $\alpha > 16$,~\cite{rv_rsw}]\footnote{More precisely, this is Proposition~$\polynomially$ of~\cite{rv_rsw}. Moreover, this can be extracted from the proof of Theorem~5.7 of~\cite{bg_16} for $\alpha>16$ and with slightly different assumptions on the differentiability and the non-degeneracy of $\kappa$.}\label{p.arm}
Assume that $f$ satisfies Conditions~\ref{a.super-std},~\ref{a.std},~\ref{a.decay} as well as Condition~\ref{a.pol_decay} for some $\alpha > 4$.  Then, there exists $C=C(\kappa)<+\infty$ and $\eta= \eta(\kappa) > 0$ such that, for each $\eps \in ]0,1]$, for each $R \in ]0,+\infty[$:
\[
\Pro \left[ \arm_0^\eps(R) \right], \, \Pro \left[ \arm_0^{*,\eps}(R) \right] \leq C \, R^{-\eta} \, .
\]
\end{prop}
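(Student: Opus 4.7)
The plan is to prove polynomial decay of arm probabilities by the standard multi-scale/RSW strategy from planar percolation, with the following three ingredients: the discrete RSW estimate of Theorem~\ref{t.epsRSW} as the source of uniform crossing bounds, the FKG inequality of Theorem~\ref{t.FKG} to glue crossings together, and a quasi-independence property supplied by the polynomial decay of correlations (Condition~\ref{a.pol_decay} with $\alpha>4$), in the same spirit as Claim~\ref{cl.quasi_ind}.

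First, I would fix dyadic scales $r_k = 8^k$ for $k\in\{1,\ldots,K\}$ with $K = \lfloor \log_8 R\rfloor$, and consider the pairwise disjoint annuli $A_k = \ann(r_k,2r_k)\subset \ann(1,R)$, which are separated from one another by a distance comparable to their own size. For each $k$, let $\mathrm{DCirc}_k$ denote the event that $A_k$ contains a \emph{white} circuit surrounding $[-r_k,r_k]^2$. Such a circuit can be realized as the intersection of four white crossings of overlapping rectangles of uniformly bounded aspect ratio inside $A_k$. Since $-f$ has the same covariance $\kappa\ge 0$ as $f$ and satisfies all of Conditions~\ref{a.super-std},~\ref{a.std},~\ref{a.decay},~\ref{a.pol_decay}, Theorem~\ref{t.epsRSW} applied to $-f$ gives each of these four white crossings a probability uniformly bounded below, and Theorem~\ref{t.FKG} applied to $-f$ lets us combine them. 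Hence there exists $c_0>0$, independent of $k$ and of $\eps\in\,]0,1]$, such that $\Pro[\mathrm{DCirc}_k]\geq c_0$ for all $k$. The analogous bound for \emph{black} circuits (used for $\arm_0^{*,\eps}(R)$) follows from the same argument applied to $f$ itself.

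Next, observe that a white circuit in $A_k$ disconnects the inner disc from the outer boundary of $\ann(1,R)$, and therefore precludes any black crossing of $\ann(1,R)$. Thus
\[
\arm_0^\eps(R) \;\subset\; \bigcap_{k=1}^{K} \neg\,\mathrm{DCirc}_k.
\]
Were the events $(\mathrm{DCirc}_k)_k$ independent, we would immediately obtain $\Pro[\arm_0^\eps(R)] \leq (1-c_0)^K \leq C\,R^{-\eta}$ with $\eta = -\log_8(1-c_0)>0$. They are not independent, but the quasi-independence step proceeds as follows: since each pair of annuli $A_k,A_{k'}$ is separated by distance of order $\max(r_k,r_{k'})$ and $|\kappa(x)|\leq C|x|^{-\alpha}$ with $\alpha>4$, a Gaussian comparison (interpolating the covariance matrix between the true joint law on $\bigcup_k A_k$ and the product of its marginals, controlling the derivative via $\int |\kappa|$-type integrals over the separating region) bounds the total error by a quantity that is at most polynomial in $r_k$ but decays faster than any fixed negative power of $R$ once summed over pairs, thanks to $\alpha>4$. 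This yields
\[
\Pro\Bigl[\bigcap_{k=1}^K \neg\,\mathrm{DCirc}_k\Bigr] \;\leq\; (1-c_0/2)^K + o(R^{-\eta_0})
\]
for every $\eta_0$, and hence the sought bound $\Pro[\arm_0^\eps(R)]\leq C\,R^{-\eta}$ for some $C<+\infty$ and $\eta>0$ depending only on $\kappa$. The bound on $\Pro[\arm_0^{*,\eps}(R)]$ follows by repeating the argument with black and white exchanged, equivalently by noting the symmetry $f\distr -f$, which interchanges the two colours at level $p=0$.

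The main obstacle is the quasi-independence step: for Gaussian fields with only polynomial correlation decay, decoupling well-separated regions requires genuine work (the Slepian-type covariance interpolation combined with uniform bounds on the crossing events, carried out in detail in~\cite{rv_rsw}), and the threshold $\alpha>4$ is precisely what makes the resulting error acceptable once summed over the $O(\log R)$ annuli. Everything else is standard planar percolation reasoning, modular on Theorem~\ref{t.epsRSW} and Theorem~\ref{t.FKG}.
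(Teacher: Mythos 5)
The statement is not proved in this paper at all: it is imported verbatim from \cite{rv_rsw} (Proposition B.6 there), precisely because the difficult ingredient --- a quasi-independence estimate whose error does not depend on the mesh $\eps$ --- is established in that companion paper. Your outline (dyadic annuli, white circuits built from four RSW crossings glued by FKG, then decoupling of the annuli) is indeed the standard skeleton behind the cited result, but the decoupling step as you describe it does not work, and this is where the genuine content lies.

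Concretely, a covariance-interpolation (Slepian/Piterbarg-type) comparison between the joint Gaussian law of the discretized field on two annuli and the product of its marginals produces an error bounded by a sum over \emph{pairs of vertices}, one in each annulus, of $|\kappa|$ evaluated at their separation. Two consecutive annuli at scale $r$ contain of order $(r/\eps)^2$ vertices each and are at distance of order $r$, so this error is of order $r^{4-\alpha}\eps^{-4}$. Two problems follow. First, the factor $\eps^{-4}$ ruins the uniformity in $\eps\in\,]0,1]$ that the proposition demands ($C$ and $\eta$ must depend on $\kappa$ only); if you try to restore smallness by spacing the scales further apart, the number of usable scales below $R$ becomes of order $\log R/\log(1/\eps)$ and the exponent $\eta$ degenerates as $\eps\downarrow 0$. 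Second, even at fixed $\eps$, your claim that the summed error ``decays faster than any fixed negative power of $R$'' is false: the contribution of the smallest annuli (say between $\ann(8,16)$ and $\ann(64,128)$) is a constant not depending on $R$ at all, since $\kappa$ only decays polynomially. Hence the inequality
\[
\Pro\Bigl[\textstyle\bigcap_{k}\neg\,\mathrm{DCirc}_k\Bigr]\leq (1-c_0/2)^K+o(R^{-\eta_0})
\]
is unjustified as stated. The correct route, followed in \cite{rv_rsw} (and, with worse exponents and constraints on $(R,\eps)$, in \cite{bg_16}), is to use a quasi-independence statement for crossing-type events whose error involves the areas of the two regions and the correlation decay but \emph{not} the mesh, and then to run the multi-scale argument scale by scale (comparing the decoupling error at scale $r$ with the polynomial bound being proved at that scale), rather than decoupling all $O(\log R)$ annuli simultaneously. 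Modulo replacing your decoupling paragraph by such an input, the rest of your argument (self-duality of black and white at $p=0$, FKG for $-f$, circuits blocking arms in a triangulation) is sound.
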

To deduce Proposition~\ref{p.infl_decr} from Proposition~\ref{p.arm}, we need the following lemma and more particularly its consequence Corollary~\ref{c.mon.conditional}.
\begin{lem}\label{l.mon.conditional}
Let $\Sigma = \left( \Sigma(i,j) \right)_{0 \leq i,j \leq n}$ be a $(n+1) \times (n+1)$ symmetric positive definite matrix, let $(m_0,m) = (m_0,m_1, \cdots, m_n) \in \R^{n+1}$ and let $(X_0,X) = (X_0,X_1, \cdots, X_n) \sim \mathcal{N} \left( (m_0,m),\Sigma \right)$. Assume that $\Sigma(0,i) \geq 0$ for every $i \in \lbrace 1, \cdots, n \rbrace$. Then, for every non-decreasing\footnote{For the partial order $(x_1, \cdots, x_n) \leq (y_1, \cdots, y_n)$ if $x_i \leq y_i$ for every $i \in \lbrace 1, \cdots, n \rbrace$.} function $\varphi : \R^n \rightarrow \R$, the following quantity is non-decreasing in $q$:
\[
\E \left[ \varphi(X) \cond X_0 = q \right] \, .
\]
Similarly, if $\varphi$ is non-increasing, then this quantity is non-increasing in $q$.
\end{lem}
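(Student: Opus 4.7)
The plan is to reduce everything to the standard Gaussian regression formula. Conditionally on $X_0 = q$, the vector $X = (X_1,\dots,X_n)$ remains Gaussian with a mean that depends linearly on $q$ and a covariance matrix that does not depend on $q$. Specifically, writing $v = (\Sigma(0,1),\dots,\Sigma(0,n))^T \in \R^n$, the usual Schur complement computation (which is applicable since $\Sigma$ is positive definite, so in particular $\Sigma(0,0) > 0$) gives that the conditional law of $X$ given $X_0 = q$ equals the law of
\[
Z + \frac{q - m_0}{\Sigma(0,0)}\, v \, ,
\]
where $Z$ is a Gaussian vector in $\R^n$ whose distribution does not depend on $q$ (it has mean $m$ and covariance equal to the Schur complement of $\Sigma(0,0)$ in $\Sigma$).

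The key observation is then that $v$ has all non-negative coordinates by hypothesis, and $\Sigma(0,0) > 0$. Therefore, for any $q_1 \leq q_2$, we have the deterministic coordinate-wise inequality
\[
Z + \frac{q_1 - m_0}{\Sigma(0,0)}\, v \,\leq\, Z + \frac{q_2 - m_0}{\Sigma(0,0)}\, v
\]
almost surely. Applying the non-decreasing function $\varphi$ and taking expectations yields
\[
\E\bigl[\varphi(X) \cond X_0 = q_1\bigr] = \E\!\left[\varphi\!\left(Z + \tfrac{q_1-m_0}{\Sigma(0,0)}v\right)\right] \leq \E\!\left[\varphi\!\left(Z + \tfrac{q_2-m_0}{\Sigma(0,0)}v\right)\right] = \E\bigl[\varphi(X) \cond X_0 = q_2\bigr] \, ,
\]
which is the desired monotonicity. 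The case of non-increasing $\varphi$ follows by applying the result to $-\varphi$.

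There is no genuine obstacle: the entire argument is a one-line coupling once the conditional regression formula is invoked. The only point to verify carefully is that the Schur complement formula applies (which it does because $\Sigma$ being positive definite forces $\Sigma(0,0) > 0$ and the conditional covariance to be well defined), and that $\varphi$ need only be Borel measurable and non-decreasing for the pointwise inequality to pass to expectations (with the usual convention that one first verifies this for bounded non-decreasing $\varphi$ and then extends by monotone convergence if needed, though for the applications in the paper $\varphi$ will typically be an indicator of an increasing event).
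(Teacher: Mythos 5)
Your proof is correct and follows essentially the same route as the paper: both rely on the Gaussian regression formula, under which the conditional law of $X$ given $X_0=q$ has covariance independent of $q$ and mean $m+(q-m_0)\bigl(\Sigma(0,i)/\Sigma(0,0)\bigr)_{1\leq i\leq n}$, a vector with non-negative entries, so monotonicity follows by the obvious coupling. Your explicit mention of the Schur complement and the coordinate-wise comparison is just a slightly more detailed write-up of the same argument.
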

\begin{proof}
This is a direct consequence of the following result (see for instance Proposition~1.2 of~\cite{azws}): Under the probability measure $\Pro \left[ \cdot \cond X_0 = q \right]$, $X$ is a Gaussian vector whose covariance matrix does not depend on $q$ and whose mean is:
\[
m + (q-m_0) v \, ,
\]
where $v = \left( \frac{\Sigma(0,i)}{\Sigma(0,0)} \right)_{1 \leq i \leq n}$ (which has only non-negative entries).
\end{proof}

From this lemma we deduce the following:
\begin{cor}\label{c.mon.conditional}
Let $(X_0,X)$ be as in Lemma~\ref{l.mon.conditional} and let $\varphi : \R^n \rightarrow \R$ be a non-decreasing function. Then, for every $p \in \R$ we have:
\[
\E \left[ \varphi(X) \cond X_0 = -p \right] \leq \E \left[ \varphi(X) \cond X_0 \geq -p \right] \, .
\]
Similarly, if $\varphi$ is non-increasing, then:
\[
\E \left[ \varphi(X) \cond X_0 = -p \right] \leq \E \left[ \varphi(X) \cond X_0 \leq -p \right] \, .
\]
\end{cor}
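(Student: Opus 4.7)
The plan is to reduce both inequalities to the monotonicity property already established in Lemma~\ref{l.mon.conditional}. The key idea is that conditioning on the event $\{X_0 \geq -p\}$ amounts to averaging the pointwise conditional expectations $\E[\varphi(X)\mid X_0=q]$ against the conditional density of $X_0$ on $[-p,+\infty[$, and an average of quantities that are all at least as large as $\E[\varphi(X)\mid X_0=-p]$ is itself at least as large.

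More concretely, for the first inequality I would write, using the tower property and the fact that $X_0$ has a density $f_{X_0}$ (since $\Sigma$ is positive definite),
\[
\E\bigl[\varphi(X)\bigm| X_0\geq -p\bigr] \;=\; \frac{1}{\Pro[X_0\geq -p]}\int_{-p}^{+\infty}\E\bigl[\varphi(X)\bigm| X_0=q\bigr]\, f_{X_0}(q)\,dq.
\]
By Lemma~\ref{l.mon.conditional} applied with the same non-decreasing $\varphi$, the map $q\mapsto \E[\varphi(X)\mid X_0=q]$ is non-decreasing; in particular, for every $q\geq -p$ it is bounded below by $\E[\varphi(X)\mid X_0=-p]$. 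Pulling this lower bound out of the integral and recognizing that $\int_{-p}^{+\infty} f_{X_0}(q)\,dq=\Pro[X_0\geq -p]$ yields the desired inequality.

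For the second inequality, with $\varphi$ non-increasing, Lemma~\ref{l.mon.conditional} gives that $q\mapsto \E[\varphi(X)\mid X_0=q]$ is non-increasing, so for every $q\leq -p$ we have $\E[\varphi(X)\mid X_0=q]\geq \E[\varphi(X)\mid X_0=-p]$. The same averaging argument, now over $]-\infty,-p]$ with respect to $f_{X_0}$, gives the claim. Alternatively one could simply apply the first inequality to $-\varphi$, which is non-decreasing, after swapping the roles of the events $\{X_0\geq -p\}$ and $\{X_0\leq -p\}$ using symmetry.

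There is no real obstacle: the entire content is Lemma~\ref{l.mon.conditional} plus the trivial observation that an average of a function over a set contains every value the function takes on that set between its infimum and supremum. The only small care needed is to note that $X_0$ has a proper density (so $\Pro[X_0\geq -p]>0$ and the disintegration above is legitimate), which is immediate from non-degeneracy of $\Sigma$.
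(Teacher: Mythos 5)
Your argument is correct and is essentially the paper's own proof: both disintegrate $\E\left[\varphi(X) \cond X_0\geq -p\right]$ against the density of $X_0$ on $[-p,+\infty[$ and use the monotonicity of $q\mapsto\E\left[\varphi(X) \cond X_0=q\right]$ from Lemma~\ref{l.mon.conditional} to bound the integrand below by its value at $q=-p$, with the same averaging over $]-\infty,-p]$ for the non-increasing case. (Only your parenthetical alternative for the second inequality would need care: applying the first statement to $-\varphi$ alone does not give it, and negating just $X_0$ ruins the sign condition on the covariances, so one would have to apply the first statement to the vector $(-X_0,-X)$ and the non-decreasing map $x\mapsto\varphi(-x)$; your main argument does not rely on this remark.)
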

\begin{proof}
Assume that $\varphi$ is non-decreasing and let $\gamma_0$ be the density of $X_0$, which exists because $\Sigma$ is positive definite. We have :
\begin{align*}
\E \left[ \varphi(X) \cond X_0 \geq -p \right] & =\frac{1}{\Pro \left[ X_0 \geq -p \right]}\int_{-p}^{+\infty}\E\left[\varphi(X)\cond X_0=t\right]\gamma_0(t)dt\\
& \geq \frac{1}{\Pro \left[ X_0 \geq -p \right]}\E\left[\varphi(X)\cond X_0=-p\right]\int_{-p}^{+\infty}\gamma_0(t)dt\\
& = \E\left[\varphi(X)\cond X_0=-p\right]
\end{align*}
where the inequality comes from Lemma~\ref{l.mon.conditional} applied with $q=-p$.
\end{proof}
We are now ready to prove Proposition \ref{p.infl_decr}. As usual in percolation theory, we are going to control our  $p$-dependent probabilities by probabilities at the self-dual point $p=0$.
\begin{proof}[Proof of Proposition~\ref{p.infl_decr}] Fix $R\geq 2$ and $\eps \in ]0,1]$. Let us prove the proposition in the case $p \geq 0$. Let $x \in \mathcal{V}^\eps_R$ and let $\arm^{\eps,*}_p(x,R/2)$ be the event that there is a white path in $\ann\left(x,1,R/2\right)$ from $\partial B(x,1)$ to $\partial B(x,R/2)$. We claim that $\Piv_x^p(\cross^\eps(2R,R)) \subseteq \arm^{\eps,*}_p(x,R/2)$. Indeed, $\Piv_x^p(\cross^\eps(2R,R))$ is the event that there are two white paths from the top and bottom sides of the rectangle $[0,2R] \times [0,R]$ to two neighbors of $x$ and two black paths from the left and right sides of $[0,2R] \times [0,R]$ to two other neighbors of $x$ (with an exception when $x$ does not belong to $[0,2R] \times [0,R]$ but is a neighbor of a point $y \in [0,2R] \times [0,R]$ such that the edge between $x$ and $y$ crosses the left or right side of $[0,2R] \times [0,R]$; in this case $\Piv_x^p(\cross^\eps(R))$ is the event that there is a white path from top to bottom that goes through $y$ and a black path from $y$ to the oppposite side). In particular, at least one black path reaches a point at distance at least $R/2$ of $x$.\\

Since $\arm^{\eps,*}_p(x,R/2)$ is decreasing and does not depend on $f(x)$, then (by Corollary~\ref{c.mon.conditional} and with $a:=\Pro \left[ Z \leq -1 \right]$ where $Z \sim \calN(0,1)$):
\begin{eqnarray*}
\Pro \left[ \arm^{\eps,*}_p(x,R/2) \cond f(x) = -p \right] & \leq & \Pro \left[ \arm^{\eps,*}_p(x,R/2) \cond f(x) \leq -p \right]\\
& \leq & \frac{\Pro \left[ \arm^{\eps,*}_p(x,R/2) \right]}{\Pro \left[ f(x) \leq -p \right]}\\
& \leq &  a^{-1} \Pro \left[ \arm^{\eps,*}_p(x,R/2) \right] \text{ since } p \leq 1\\
& \leq & a^{-1} \Pro \left[ \arm^{\eps,*}_0(x,R/2) \right] \text{since $p\geq 0$}\, .
\end{eqnarray*}
The result follows from Proposition~\ref{p.arm} (and stationarity). The case $p \leq 0$ is treated similarly (by noting that $\Piv_x^p(\cross^\eps(2R,R)) \subseteq \arm^{\eps}_p(x,R/2)$ and by studying this increasing event exactly as above).
\end{proof}

\section{A KKL theorem for biased Gaussian vectors: the proof of Theorem \ref{t.KMSnonproduct}}\label{s.KMS}
In this section we prove Theorem~\ref{t.KMSnonproduct}. The proofs presented here do not rely on any results of the other sections. Recall that we use the following definition for influence. Given a vector $v\in\R^n$ and a Borel probability measure $\mu$ on $\R^n$ the influence of $v$ on $A$ under $\mu$ is  
\[
I_{v,\mu}(A)=\underset{r \downarrow 0}{\liminf}\frac{\mu \left( A+[-r,r]v \right) - \mu \left( A \right)}{r} \in [0,+\infty]\, .
\]
\subsection{Sub-linearity of influences implies Theorem \ref{t.KMSnonproduct}}\label{ss.KMSnonproduct}

The aim of this subsection is to prove Theorem~\ref{t.KMSnonproduct}. That this theorem holds for product Gaussian measures is a result by Keller, Mossel and Sen,~\cite{keller2012geometric} (see Corollary~\ref{c.KMS} below). In order to extend this result to general Gaussian measures, we use a sub-linearity property for influences. In the present subsection, we state this property and use it to derive Theorem~\ref{t.KMSnonproduct} from the product case. In Subsection~\ref{ss.monotonic}, we prove the sub-linearity property for monotonic sets.

\paragraph{A KKL theorem for product Gaussian vectors} The authors of \cite{keller2012geometric} introduce and study the notion of \textit{geometric influences} which are defined as follows: Let $\nu$ be a probability measure on $\R$ and let $A$ be a Borel subset of $\R^n$. If $i \in \lbrace 1, \cdots, n \rbrace$, let:
\[
A_i^x := \lbrace y \in \R \, : \, (x_1, \cdots, x_{i-1}, y, x_{i+1}, \cdots, x_n) \in A \rbrace \, .
\]
The geometric influence of $i$ on $A$ under the measure $\nu^{\otimes n}$ is:
\[
I^\mathcal{G}_{i,\nu}(A) := \E_{x \sim \nu^{\otimes n}} \left[ \nu^+(A_i^x) \right] \in [0,+\infty] \, ,
\]
where $\nu^+$ is the lower Minkowski content, defined as follows: for all $B \subseteq \R$ Borel,
\[
\nu^+(B) := \underset{r \downarrow 0}{\liminf} \frac{\nu\left(B+[-r,r]\right) - \nu \left(B\right)}{r} \in [0,+\infty] \, .
\]

In the case where $\mu=\nu^{\otimes n}$, $I^\calG_{i,\nu}$ and $I_{i,\mu}$ are closely related. Indeed, firstly, by Fubini's Theorem and Fatou's lemma, for each Borel subset $A\subset\R^n$ and each $i\in\{1,\dots,n\}$,

\begin{equation}\label{e.KMS.Fatou}
I_{i,\nu^{\otimes n}}(A)=\liminf_{r\downarrow 0} \E_{x \sim \nu^{\otimes n}} \left[ \frac{\nu(A_i^x+[-r,r]) - \nu(A)}{r} \right]\geq I^\calG_{i,\nu}(A) \, .
\end{equation}

While the reverse inequality seems not true in general, we expect it to hold for a wide class of events. In particular, our Lemma~\ref{l.conv} and~(2.6) from~\cite{keller2012geometric} imply that this is the case for monotonic events. Since it is not useful to us, we do not investigate this matter any further.\\


We will need the following result, which is a direct consequence of Item~(1) of Theorem~1.5 of \cite{keller2012geometric}. Several results of this type can also be found in the more recent~\cite{cordero2012hypercontractive} (see for instance the paragraph above Corollary~7 therein).

\begin{thm}\label{t.KMS}
There exists an absolute constant $c > 0$ such that the following holds:

Let $\nu = \mathcal{N}(0,1)$ and let $A$ be a Borel measurable subset of $\R^n$. Then:
\[
\sum_{i=1}^n I^\mathcal{G}_{i,\nu}(A) \geq c \, \nu^{\otimes n}(A) \, (1-\nu^{\otimes n}(A)) \, \sqrt{\log_+ \left( \frac{1}{\underset{i \in \lbrace 1, \cdots, n \rbrace}{\text{\textup{max}}} I^\mathcal{G}_{i,\nu}(A)}\right) } \, .
\]
\end{thm}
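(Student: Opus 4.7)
The plan is essentially to cite: Theorem~\ref{t.KMS} is the specialization to $\nu=\calN(0,1)$ of Item~(1) of Theorem~1.5 in~\cite{keller2012geometric}, which gives a KKL-type inequality for geometric influences under quite general product measures on $\R^n$. All that has to be checked is that the standard Gaussian marginal lies in the class of measures for which KMS state their result, which is immediate: $\nu$ is absolutely continuous with a smooth, strictly log-concave density having all moments. No argument beyond this verification is required for our purposes.

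Since the actual work is done in~\cite{keller2012geometric}, let me briefly outline the structure of their proof. KMS mimic the classical Kahn--Kalai--Linial argument~\cite{kahn1988influence}, replacing the Boolean cube $\{0,1\}^n$ with $(\R,\nu)^{\otimes n}$ and the discrete edge-boundary with the lower Minkowski content $\nu^+$. The two main analytic inputs are, on the one hand, Bobkov's Gaussian isoperimetric inequality $\nu^+(B)\geq \calI(\nu(B))$ with $\calI(t)=\phi(\Phi^{-1}(t))$, which controls each geometric influence $I^\calG_{i,\nu}(A)=\E_x[\nu^+(A_i^x)]$ in terms of the Gaussian measures of the one-dimensional slices; and, on the other hand, hypercontractivity of the Ornstein--Uhlenbeck semigroup on $L^2(\nu^{\otimes n})$, applied via the Hermite (Wiener chaos) expansion of $\un_A$, which relates $\var(\un_A)=\nu^{\otimes n}(A)(1-\nu^{\otimes n}(A))$ to the sum and the maximum of the individual influences. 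Optimizing over the semigroup parameter produces the $\sqrt{\log}$ factor appearing in the conclusion.

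The subtle point in the KMS approach---and the reason one obtains a $\sqrt{\log}$ rather than the $\log$ of the discrete KKL theorem---is the asymptotic $\calI(t)\sim t\sqrt{2\log(1/t)}$ of the Gaussian isoperimetric profile near zero, which is what makes geometric influences the right continuous analogue of discrete pivotal probability. For our purposes Theorem~\ref{t.KMS} is used as a black box; the non-trivial work in Section~\ref{s.KMS} will instead lie in combining it, via the Fatou-type inequality~\eqref{e.KMS.Fatou} and a change of variables $X=\sqrt{\Sigma}\, Y$ with $Y\sim\nu^{\otimes n}$, with the sub-linearity property of influences (Proposition~\ref{p.sublin}) so as to pass from the product measure $\nu^{\otimes n}$ to arbitrary non-degenerate centered Gaussian measures $\calN(0,\Sigma)$, thereby obtaining Theorem~\ref{t.KMSnonproduct}.
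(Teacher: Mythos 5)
Your proposal matches the paper exactly: Theorem~\ref{t.KMS} is used there as a black box, justified only by citing Item~(1) of Theorem~1.5 of \cite{keller2012geometric}, which is precisely what you do (your sketch of the KMS isoperimetry-plus-hypercontractivity argument is extra background, not needed for the deduction). The only point to keep in mind is that \cite{keller2012geometric} state their results for Boltzmann-type densities, so passing to the standard Gaussian $\calN(0,1)$ may involve a trivial rescaling absorbed into the absolute constant, which is exactly the sense in which the paper calls the statement a ``simple consequence'' of their theorem.
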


\begin{cor}\label{c.KMS}
Let $\nu$ and $A$ be as in Theorem~\ref{t.KMS}. Then:
\[
\sum_{i=1}^n I_{i,\nu^{\otimes n}}(A) \geq c \, \nu^{\otimes n}(A) \, (1-\nu^{\otimes n}(A)) \, \sqrt{\log_+ \left( \frac{1}{\underset{i \in \lbrace 1, \cdots, n \rbrace}{\text{\textup{max}}} I_{i,\nu^{\otimes n}}(A)}\right) } \, .
\]
\end{cor}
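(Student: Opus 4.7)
The plan is to deduce Corollary~\ref{c.KMS} directly from Theorem~\ref{t.KMS} combined with the one-sided comparison~\eqref{e.KMS.Fatou} between the two notions of influence. The two inequalities go in the same (favorable) direction, so no extra work should be needed beyond bookkeeping.

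First I would apply Theorem~\ref{t.KMS} to the set $A$ to obtain
\[
\sum_{i=1}^n I^{\mathcal G}_{i,\nu}(A) \;\geq\; c\,\nu^{\otimes n}(A)\bigl(1-\nu^{\otimes n}(A)\bigr)\,\sqrt{\log_+\!\left(\frac{1}{\max_i I^{\mathcal G}_{i,\nu}(A)}\right)}.
\]
Then I would invoke inequality~\eqref{e.KMS.Fatou}, which gives $I_{i,\nu^{\otimes n}}(A)\geq I^{\mathcal G}_{i,\nu}(A)$ for every $i$. Summing this pointwise bound over $i$ upgrades the left-hand side to $\sum_i I_{i,\nu^{\otimes n}}(A)$.

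Next I would handle the logarithmic term. Taking the maximum over $i$ in the same pointwise bound yields $\max_i I^{\mathcal G}_{i,\nu}(A)\leq \max_i I_{i,\nu^{\otimes n}}(A)$, hence
\[
\frac{1}{\max_i I^{\mathcal G}_{i,\nu}(A)} \;\geq\; \frac{1}{\max_i I_{i,\nu^{\otimes n}}(A)},
\]
and since $\log_+$ is non-decreasing, the corresponding inequality is preserved under $\sqrt{\log_+(\cdot)}$. Thus the quantity on the right-hand side of the displayed bound only decreases when we replace geometric influences by the influences $I_{i,\nu^{\otimes n}}$. Combining the two substitutions (one on each side, both in the correct direction) produces exactly the claimed inequality, with the same absolute constant $c$ as in Theorem~\ref{t.KMS}.

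I do not anticipate any genuine obstacle: Theorem~\ref{t.KMS} is invoked as a black box, and~\eqref{e.KMS.Fatou} is a direct consequence of Fubini and Fatou as already recorded in the text. The only thing worth checking carefully is the direction of monotonicity of $\log_+$, so that replacing $\max I^{\mathcal G}$ by $\max I$ weakens — rather than strengthens — the right-hand side; this is what makes the comparison go through cleanly.
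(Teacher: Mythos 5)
Your argument is correct and is exactly the paper's proof: the corollary is obtained by chaining Theorem~\ref{t.KMS} with the pointwise bound $I_{i,\nu^{\otimes n}}(A)\geq I^{\mathcal G}_{i,\nu}(A)$ from~\eqref{e.KMS.Fatou}, which simultaneously enlarges the left-hand side and (via monotonicity of $\log_+$) shrinks the right-hand side. Nothing further is needed.
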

\begin{proof}
This is a direct consequence of Theorem~\ref{t.KMS} and Equation~\eqref{e.KMS.Fatou}. 
\end{proof}

Now, let us state a sub-linearity property for the influences that we will be proved in Subsection~\ref{ss.monotonic}.

\begin{prop}\label{p.sublin}
Let $\Sigma$ be a $n \times n$ symmetric positive definite matrix, let $\mu \sim \mathcal{N}(0,\Sigma)$ and let $\left( e_1, \cdots, e_n \right)$ be the canonical basis of $\R^n$. Also, let $v = \sum_{j=1}^n v_i \, e_i \in \R^n$. For every $i \in \lbrace 1, \cdots, n \rbrace$ and every $A$ monotonic Borel subset of $\R^n$, we have:
\begin{equation}\label{e.sublinrec}
I_{v,\mu}(A) \leq \sum_{i=1}^n |v_i| \, I_{i,\mu}(A) \, .
\end{equation}
\end{prop}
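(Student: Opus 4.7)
The plan is to reduce the general directional enlargement $A+[-r,r]v$ to a pure translation $A-r|v|$, using the monotonicity of $A$, and then to compute the resulting measure increment via a Taylor expansion of the density $\rho$ of $\mu$.

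First, by the symmetry $x\mapsto -x$ (which preserves $\mu=\mathcal N(0,\Sigma)$, swaps increasing and decreasing sets, and leaves $I_{v,\mu}$ invariant, because $[-r,r](-v)=[-r,r]v$), we may assume that $A$ is increasing. The crucial geometric observation is then the inclusion
\[
A+[-r,r]v\ \subseteq\ A-r|v|,\qquad\text{where }|v|:=\sum_i|v_i|\,e_i.
\]
Indeed, for any $x\in A$ and $s\in[-r,r]$, each coordinate of $x+sv+r|v|$ satisfies $x_i+sv_i+r|v_i|\geq x_i$, so $x+sv+r|v|\in A$ by monotonicity, i.e.\ $x+sv\in A-r|v|$. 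Specialising to $v=e_i$ one gets in fact the equality $A+[-r,r]e_i=A-re_i$.

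Next, the change of variables $x\mapsto x+r|v|$ gives
\[
\mu(A-r|v|)-\mu(A)=\int_A\bigl(\rho(x-r|v|)-\rho(x)\bigr)\,dx.
\]
Since $\rho$ is smooth and its partials up to order two are polynomials times $\rho$, a Taylor expansion yields $\rho(x-r|v|)-\rho(x)=-r\,|v|\cdot\nabla\rho(x)+R(x,r)$ with $|R(x,r)|\leq C_v\,r^2\,P_v(x)\rho(x)$ for some polynomial $P_v$, uniformly for $r\in[0,1]$. Since $P_v\rho$ is integrable on $\R^n$, dominated convergence yields
\[
\lim_{r\downarrow 0}\frac{\mu(A-r|v|)-\mu(A)}{r}=-\sum_i|v_i|\int_A\partial_i\rho(x)\,dx.
\]
Applying this with $v=e_i$ and using the equality $A+[-r,r]e_i=A-re_i$ from the first step, one identifies the right-hand side: $I_{e_i,\mu}(A)=-\int_A\partial_i\rho\,dx$, and in particular the $\liminf$ in the definition of $I_{e_i,\mu}(A)$ is a genuine limit for monotonic $A$.

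Combining the geometric inclusion with the last two displays gives
\[
I_{v,\mu}(A)=\liminf_{r\downarrow 0}\frac{\mu(A+[-r,r]v)-\mu(A)}{r}\ \leq\ \lim_{r\downarrow 0}\frac{\mu(A-r|v|)-\mu(A)}{r}=\sum_i|v_i|\,I_{e_i,\mu}(A),
\]
which is the desired inequality. The main obstacle is purely technical: one must justify the Taylor interchange by controlling the remainder $R(x,r)$ uniformly in $r$ by an integrable majorant on $\R^n$ (an arbitrary Borel monotonic $A$ is not assumed smooth), which reduces to standard Gaussian estimates on the first and second partials of $\rho$.
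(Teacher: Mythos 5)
Your proof is correct, but it takes a genuinely different route from the paper. The paper proceeds coordinate-by-coordinate: it proves the stability statement of Lemma~\ref{l.conv} (that the $e_i$-increment computed on the thickened set $A+[-r,r]v$ converges to $I_{i,\mu}(A)$ as $r\downarrow 0$), using the section functions $s(\tilde{x}),s_r(\tilde{x})$ of a monotone set and an a.e.\ convergence argument, and then obtains \eqref{e.sublinrec} by the inclusion $[-r,r](w_1+w_2)\subseteq[-r,r]w_1+[-r,r]w_2$ and induction on the coordinates. You instead use one global geometric observation — for increasing $A$, $A+[-r,r]v\subseteq A-r|v|$ with $|v|=\sum_i|v_i|e_i$, and $A+[-r,r]e_i=A-re_i$ — which reduces everything to differentiating $r\mapsto\mu(A-r|v|)$ under the integral sign; this yields the closed formula $I_{i,\mu}(A)=-\int_A\partial_i\rho\,dx$, shows in passing that the $\liminf$ defining $I_{i,\mu}(A)$ is a true limit for monotone $A$, and gives \eqref{e.sublinrec} without induction or the section-function machinery. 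Both arguments exploit the smooth Gaussian density; yours is shorter and more explicit, while the paper's Lemma~\ref{l.conv} is a slightly stronger statement about perturbed sets $A+[-r,r]v$ that it needs for its telescoping step. One small imprecision to fix: the Taylor remainder bound $|R(x,r)|\leq C_v r^2 P_v(x)\rho(x)$ is not literally correct, since the derivatives of $\rho$ are evaluated at the shifted points $x-tr|v|$ and $\rho(x-u)/\rho(x)$ can grow like $e^{C|x|}$ for bounded $u$; the correct uniform majorant is of the form $C_v r\,P_v(x)e^{C|x|}\rho(x)$ (first-order control via the mean value theorem already suffices), which is still integrable, so dominated convergence and the rest of your argument go through unchanged.
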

We expect the above result to hold for a larger class of Borel subsets $A$ (in particular, we have not found any examples of Borel sets for  which this does not hold) and not only for the decomposition on the canonical basis. See Appendix A of Chapter 7 of \cite{alelele} or Appendix A of Chapter 2 of \cite{hugogogo} for a proof of the inequality $I_{u+v,\mu}(A)\leq I_{u,\mu}(A)+I_{v,\mu}(A)$ for any $v,w$ when $A$ is a semi-algebraic set.\\

Let us now derive Theorem~\ref{t.KMSnonproduct} from Corollary \ref{c.KMS} using Proposition\ref{p.sublin}.

\begin{proof}[Proof of Theorem~\ref{t.KMSnonproduct}] Let $\sqrt{\Sigma}$ be a symmetric square root of $\Sigma$ and let $\nu=\mathcal{N}(0,1)$. Then, $\mu=\calN(0,\Sigma)$ is the pushforward measure of $\nu^{\otimes n}$ by $\sqrt{\Sigma}$. Thanks to Corollary~\ref{c.KMS} (applied to the event $\sqrt{\Sigma}^{-1}(A)$), it is sufficient to prove the following claim.
\end{proof}
\begin{claim}
Let $\nu = \mathcal{N}(0,1)$. We have:
\begin{equation}\label{e.cons_sublin_1}
\max_{j\in\lbrace 1,\cdots, n\rbrace} I_{j,\nu^{\otimes n}}(\sqrt{\Sigma}^{-1} \left( A \right)) \leq || \sqrt{\Sigma} ||_{\infty,op} \cdot \underset{i \in \lbrace 1, \cdots, n \rbrace}{\max} I_{i,\mu}(A) \, .
\end{equation}
Moreover:
\begin{equation}\label{e.cons_sublin_2}
\sum_{j=1}^n I_{j,\nu^{\otimes n}}(\sqrt{\Sigma}^{-1} \left( A \right)) \leq || \sqrt{\Sigma} ||_{\infty,op} \cdot \sum_{i=1}^n I_{i,\mu}(A) \, .
\end{equation}
\end{claim}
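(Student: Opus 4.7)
The plan is to reduce the claim to Proposition~\ref{p.sublin} by a change of variables. First, I would observe that since $\mu$ is the pushforward of $\nu^{\otimes n}$ by $\sqrt{\Sigma}$, and since $\sqrt{\Sigma}$ is linear, we have for every Borel $A\subseteq\R^n$, every $r>0$ and every $j\in\{1,\dots,n\}$ the identity
\[
\sqrt{\Sigma}^{-1}(A)+[-r,r]\,e_j \;=\; \sqrt{\Sigma}^{-1}\!\left(A+[-r,r]\,\sqrt{\Sigma}\,e_j\right),
\]
which, after taking $\nu^{\otimes n}$-measures and passing to the $\liminf$ in $r\downarrow 0$, gives
\[
I_{j,\nu^{\otimes n}}\!\left(\sqrt{\Sigma}^{-1}(A)\right) \;=\; I_{\sqrt{\Sigma}\,e_j,\,\mu}(A).
\]

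Next, I would apply Proposition~\ref{p.sublin} to the vector $v=\sqrt{\Sigma}\,e_j$, whose coordinates in the canonical basis are $v_i=\sqrt{\Sigma}(i,j)$. This yields, for monotonic $A$,
\[
I_{\sqrt{\Sigma}\,e_j,\,\mu}(A) \;\leq\; \sum_{i=1}^n |\sqrt{\Sigma}(i,j)|\,I_{i,\mu}(A).
\]

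Finally, I would combine these two displays with the symmetry of $\sqrt{\Sigma}$. For the maximum bound, taking a maximum over $j$ gives
\[
\max_j I_{j,\nu^{\otimes n}}(\sqrt{\Sigma}^{-1}(A)) \;\leq\; \left(\max_j \sum_i |\sqrt{\Sigma}(i,j)|\right)\cdot\max_i I_{i,\mu}(A),
\]
and since $\sqrt{\Sigma}$ is symmetric the column sums coincide with the row sums, so the bracketed quantity is exactly $\|\sqrt{\Sigma}\|_{\infty,op}$ by the formula recalled in the footnote. For the sum bound, summing over $j$ and interchanging the sums yields
\[
\sum_j I_{j,\nu^{\otimes n}}(\sqrt{\Sigma}^{-1}(A)) \;\leq\; \sum_i I_{i,\mu}(A)\cdot\sum_j |\sqrt{\Sigma}(i,j)|,
\]
and each inner row sum $\sum_j |\sqrt{\Sigma}(i,j)|$ is bounded by $\|\sqrt{\Sigma}\|_{\infty,op}$, giving the second inequality.

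There is essentially no obstacle here beyond the bookkeeping; the only subtle point is to use the symmetry of $\sqrt{\Sigma}$ explicitly, since the decomposition produced by Proposition~\ref{p.sublin} naturally produces column sums of $\sqrt{\Sigma}$, whereas $\|\sqrt{\Sigma}\|_{\infty,op}$ is defined as a maximum of row sums. All the real work has already been done: the pushforward argument is formal, and the substantive content lies in Proposition~\ref{p.sublin}, whose proof is deferred to Subsection~\ref{ss.monotonic}.
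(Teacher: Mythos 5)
Your proof is correct and follows essentially the same route as the paper: the linear change of variables identifying $I_{j,\nu^{\otimes n}}(\sqrt{\Sigma}^{-1}(A))$ with $I_{\sqrt{\Sigma}\,e_j,\mu}(A)$, followed by Proposition~\ref{p.sublin} applied to $v=\sqrt{\Sigma}\,e_j$ and the symmetry of $\sqrt{\Sigma}$ to convert column sums into the row-sum formula for $\|\sqrt{\Sigma}\|_{\infty,op}$. No gaps.
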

\begin{proof}
For each $j\in\lbrace 1,\cdots, n\rbrace$:
\begin{eqnarray*}
I_{j,\nu^{\otimes n}}(\sqrt{\Sigma}^{-1} \left( A \right)) & = & \underset{r \downarrow 0}{\liminf} \frac{\nu^{\otimes n} \left( \sqrt{\Sigma}^{-1}  \left( A \right) + [-r,r] \, e_j \right) - \nu^{\otimes n} \left( \sqrt{\Sigma}^{-1} \left( A \right) \right)}{r}\\
& = & \underset{r \downarrow 0}{\liminf} \frac{\mu \left( A + [-r,r] \, \sqrt{\Sigma} \cdot e_j \right) - \mu \left( A \right)}{r}\\
& = & I_{\sqrt{\Sigma} \cdot e_j,\mu}(A) \, .
\end{eqnarray*}
By using Proposition~\ref{p.sublin}, we obtain that:
\begin{equation}\label{e.cons_sublin_inter}
I_{j,\nu^{\otimes n}}(\sqrt{\Sigma}^{-1} \left( A \right)) \leq \sum_{i = 1}^n |\sqrt{\Sigma}(i,j)| I_{i,\mu}(A) \, .
\end{equation}
Hence (since $\sqrt{\Sigma}$ is symmetric):
\[
I_{j,\nu^{\otimes n}}(\sqrt{\Sigma}^{-1} \left( A \right)) \leq \sum_{i = 1}^n |\sqrt{\Sigma}(i,j)| \underset{i \in \lbrace 1, \cdots, n \rbrace}{\max} I_{i,\mu}(A) \leq ||\sqrt{\Sigma} ||_{\infty,op} \, \underset{i \in \lbrace 1, \cdots, n \rbrace}{\max} I_{i,\mu}(A) \, .
\]
We obtain ~\eqref{e.cons_sublin_1} by taking the supremum over $j$. Inequality~\eqref{e.cons_sublin_inter} also implies that:
\begin{eqnarray*}
\sum_{j=1}^n I_{\sqrt{\Sigma} \cdot e_j,\mu}(A) & \leq & \sum_{j=1}^n \sum_{i=1}^n |\sqrt{\Sigma}(i,j)| \, I_{i,\mu}(A)\\
& = & \sum_{i=1}^n I_{i,\mu}(A) \sum_{j=1}^n |\sqrt{\Sigma}(i,j)|\\
& \leq & ||\sqrt{\Sigma}||_{\infty,op} \cdot \sum_{i=1}^n I_{i,\mu}(A) \, ,
\end{eqnarray*}
which is~\eqref{e.cons_sublin_2}.
\end{proof}

\subsection{Sub-linearity of influences for monotonic events}\label{ss.monotonic}

The proof of Proposition~\ref{p.sublin} relies on the following lemma.

\begin{lem}\label{l.conv}
Let $\Sigma$ be a $n \times n$ symmetric positive definite matrix and let $\mu = \mathcal{N}(0,\Sigma)$. Moreover, let $v \in \R^n$. For every $A$ monotonic Borel subset of $\R^n$ and every $i \in \lbrace 1, \cdots, n \rbrace$, we have:
\[
\exists \, \underset{r \downarrow 0}{\text{\textup{lim}}\;} \frac{\mu \left( A + [-r,r] e_i + [-r,r]v  \right) - \mu \left( A +  [-r,r]v \right)}{r} = I_{i,\mu}(A) \, .
\]
\end{lem}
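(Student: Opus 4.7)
The plan is to combine two ingredients: a simplification coming from monotonicity of $A$, which reduces Minkowski dilations to translates, and a dominated convergence argument based on the smoothness of the Gaussian density $\gamma$ of $\mu$.

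First I would use the following elementary observation: for any increasing Borel set $C \subseteq \R^n$ and any $r > 0$, one has $C + [-r,r]e_i = C - re_i$, since for $t \geq 0$ the inclusion $C + te_i \subseteq C$ holds by monotonicity. Assume without loss of generality that $A$ is increasing (the decreasing case is symmetric), and set $B_r := A + [-r,r]v$. Being a Minkowski sum involving an increasing set, $B_r$ is itself increasing, so the observation yields $B_r + [-r,r]e_i = B_r - re_i$. Writing out $\mu(B_r - re_i) - \mu(B_r)$ via the density $\gamma$ and the fundamental theorem of calculus along the $e_i$ direction gives
\[
\frac{\mu(B_r + [-r,r]e_i) - \mu(B_r)}{r} = -\int_0^1 \int_{\R^n} \un_{B_r}(y)\, \partial_i \gamma(y - r\tau e_i)\, dy\, d\tau.
\]

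The core of the proof is then to pass to the limit $r \downarrow 0$ inside the integral. The integrand is dominated by the integrable function $g(y) := \sup_{|u| \leq 1} |\partial_i \gamma(y + u e_i)|$, and smoothness of $\gamma$ gives $\partial_i \gamma(y - r\tau e_i) \to \partial_i \gamma(y)$ pointwise, so the only delicate point is the pointwise-a.e. convergence $\un_{B_r}(y) \to \un_A(y)$. Since $B_r \supseteq A$ is nonincreasing in $r$, it suffices to show that $\mu(\bigcap_{r>0} B_r \setminus A) = 0$. Here is where monotonicity of $A$ enters nontrivially: decomposing $v = v_+ - v_-$ into componentwise non-negative/non-positive parts and setting $|v| := v_+ + v_-$, I would check directly that $B_r \subseteq A - r|v|$. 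Indeed, for $y = a + tv$ with $a \in A$ and $|t|\leq r$,
\[
y + r|v| = a + (t+r)v_+ + (r-t)v_-
\]
is a componentwise non-negative shift of $a$, hence belongs to $A$ by increasingness. Since $\mu(A - r|v|) = \int_A \gamma(z - r|v|)\,dz \to \mu(A)$ by dominated convergence (Gaussian integrability making this routine), we conclude $\mu(B_r) \to \mu(A)$, and thus $\un_{B_r} \to \un_A$ Lebesgue-a.e.

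Applying dominated convergence to the displayed identity yields
\[
\lim_{r \downarrow 0} \frac{\mu(B_r + [-r,r]e_i) - \mu(B_r)}{r} = -\int_A \partial_i \gamma(y)\, dy,
\]
so in particular the limit exists. Specializing the entire argument to $v = 0$ (so that $B_r = A$ for every $r$) shows that the liminf in the definition of $I_{i,\mu}(A)$ is actually a limit, equal to the same quantity $-\int_A \partial_i \gamma(y)\, dy$, closing the proof. The main obstacle I anticipate is controlling $\bigcap_r B_r \setminus A$: monotonicity of $A$ is essential for the sandwich $A \subseteq B_r \subseteq A - r|v|$, and without it the Minkowski dilation $B_r$ could genuinely accumulate positive mass in directions transverse to $A$.
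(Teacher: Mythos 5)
Your proof is correct, and it takes a genuinely different route from the paper's. The paper works with one-dimensional sections: writing $A_r=A+[-r,r]v$ and (for $A$ decreasing) the boundary functions $s(\tilde x)=\sup\{x_n:(\tilde x,x_n)\in A\}$ and $s_r(\tilde x)$, it expresses both difference quotients as $\frac1r\int_{\R^{n-1}}\int_{s}^{s+r}\lambda$, replaces the inner averages by $\lambda(\tilde x,s(\tilde x))$ and $\lambda(\tilde x,s_r(\tilde x))$ up to an $O(r)$ error via the mean value inequality, and then shows $s_r(\tilde x)\to s(\tilde x)$ a.e.\ by a Fatou argument. Your key step is instead the identity $C+[-r,r]e_i=C- re_i$ for increasing $C$, which converts the Minkowski dilation in the $e_i$ direction into a pure translation and lets you write the whole difference quotient as $-\int_0^1\int \un_{B_r}(y)\,\partial_i\gamma(y-r\tau e_i)\,dy\,d\tau$; the problem then reduces to $\un_{B_r}\to\un_A$ a.e., which you get from the sandwich $A\subseteq B_r\subseteq A-r|v|$ (the one ingredient your argument shares with the paper's, which uses the same comparison to prove $\mu(A_r)\to\mu(A)$) together with the fact that $\mu$ has a positive density, so that $\mu(\bigcap_r B_r\setminus A)=0$ forces Lebesgue-a.e.\ convergence of the indicators. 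Your route avoids the mean-value/Fatou machinery on sections and has the pleasant byproduct of the closed formula $I_{i,\mu}(A)=-\int_A\partial_i\gamma(y)\,dy$ for increasing $A$ (the paper's $\int_{\R^{n-1}}\lambda(\tilde x,s(\tilde x))\,d\tilde x$ in disguise), and it makes transparent that the liminf in the definition of the influence is in fact a limit. All the individual steps check out: $B_r$ is increasing as a union of translates of an increasing set, the componentwise computation $y+r|v|=a+(t+r)v_++(r-t)v_-\geq a$ is right, and the dominating function $\sup_{|u|\le1}|\partial_i\gamma(\cdot+ue_i)|$ is integrable since $\partial_i\gamma$ is a polynomial times a Gaussian.
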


We first prove Proposition \ref{p.sublin} using Lemma \ref{l.conv}.

\begin{proof}[Proof of Proposition~\ref{p.sublin}]
Let $A$ be a monotonic Borel subset of $\R^n$ and fix $v \in \R^n$. Let $v^i=\sum_{k=i}^n v_k e_k$. We will prove that, for every $i \in \{ 1, \cdots, n-1 \}$:
\begin{equation}\label{e.sublin}
I_{v^i,\mu}(A) \leq |v_i| \, I_{i,\mu}(A) + I_{v^{i+1},\mu}(A) \, .
\end{equation}
The result will then follow directly by induction (and since $I_{v^{n},\mu}(A)= I_{n,\mu}(A)$). For any $w_1,w_2 \subseteq \R^n$, we have $[-r,r](w_1+w_2) \subseteq [-r,r]w_1+[-r,r]w_2$. Hence:

\begin{align*}
\mu \left( A + [-r,r]v^i \right) =\ &\mu \left( A + [-r,r] \left(v_i e_i + v^{i+1} \right) \right)\\
\leq\, &\mu \left( A + [-|v_i| r,|v_i| r] e_i + [-r,r] v^{i+1} \right)\\
=\ &\mu \left( A + [-|v_i| r,|v_i| r] e_i + [-r,r] v^{i+1} \right)\\
&\ - \mu \left( A + [-r,r] v^{i+1} \right) + \mu \left( A + [-r,r] v^{i+1} \right) \, .
\end{align*}

By Lemma \ref{l.conv} we have:
\[
\frac{\mu \left( A + [-|v_i| r,|v_i| r] \, e_i + [-r,r] \, v^{i+1} \right) - \mu \left( A + [-r,r]v^{i+1} \right)}{r} \underset{r \downarrow 0}{\longrightarrow} |v_i| \, I_{i,\mu}(A) \, .
\]
Equation~\eqref{e.sublin} follows.
\end{proof}

\begin{proof}[Proof of Lemma \ref{l.conv}] We are inspired by the proof of Proposition~1.3 in~\cite{keller2012geometric}. We write the proof for $A$ decreasing since the proof for $A$ increasing is identical. Also, we prove the result in the case $i = n$. We write $\tilde{x}$ for the first $(n-1)$ coordinates of any $x\in\R^n$. Let $A_r = A + [-r,r]v$, note that $A_r$ is decreasing, and write for any $\tilde{x}\in\R^{n-1}$:
\begin{align*}
s(\tilde{x}) &:= \sup \lbrace x_n \in \R \, : \, (\tilde{x}, x_n) \in A \rbrace \in [-\infty,+\infty] \, ,\\
s_r(\tilde{x}) &:= \sup \lbrace x_n \in \R \, : \, (\tilde{x}, x_n) \in A_r \rbrace \in [-\infty,+\infty] \, .
\end{align*}
Let $\lambda$ be the density function of $\mu$. Since $A$ and $A_r$ are decreasing, we have:
\begin{align}\label{e.mon.integral}
\frac{\mu ( A + [-r,r] e_n ) - \mu (A)}{r} = \frac{1}{r} \int_{\R^{n-1}}  \left(\int_{s(\tilde{x})}^{s(\tilde{x})+r} \lambda(\tilde{x},x_n) dx_n\right)d\tilde{x} \, , \nonumber\\
\frac{\mu ( A_r + [-r,r] e_n ) - \mu ( A_r )}{r} = \frac{1}{r} \int_{\R^{n-1}}  \left(\int_{s_r(\tilde{x})}^{s_r(\tilde{x})+r} \lambda(\tilde{x},x_n) dx_n\right)d\tilde{x} \, ,
\end{align}
where by convention $\int_{-\infty}^{-\infty+r} = \int_{+\infty}^{+\infty+r} = 0$. For each $\tilde{x}\in\R^{n-1}$ let:
\[
g_1(\tilde{x})=\sup_{x_n\in\R}\lambda(\tilde{x},x_n);\: \: g_2(\tilde{x})=\sup_{x_n\in\R}\left|\frac{\partial\lambda}{\partial x_n}(\tilde{x},x_n)\right|.
\]
Direct computation shows that $g_1,g_2\in L^1(\R^{n-1})$. By the mean value inequality, for each $\tilde{x}\in\R^{n-1}$:
\[
\left|\frac{1}{r}\left(\int_{s(\tilde{x})}^{s(\tilde{x})+r} \lambda(\tilde{x},x_n) dx_n\right)-\lambda(\tilde{x},s(\tilde{x}))\right|+\left|\frac{1}{r}\left(\int_{s_r(\tilde{x})}^{s_r(\tilde{x})+r} \lambda(\tilde{x},x_n) dx_n\right)-\lambda(\tilde{x},s_r(\tilde{x}))\right|
\]
is no greater than $2rg_2(\tilde{x})$. Combining this with Equation~\eqref{e.mon.integral} we get:
\begin{multline*}
\left|\frac{\mu ( A + [-r,r] e_n ) - \mu (A)}{r}- \frac{\mu ( A_r + [-r,r] e_n ) - \mu ( A_r )}{r} \right|\\
\leq \left|\int_{\R^{n-1}}\lambda(\tilde{x},s(\tilde{x}))
-\lambda(\tilde{x},s_r(\tilde{x})) \, d\tilde{x}\right|+2r\int_{\R^{n-1}}g_2(\tilde{x}) \, d\tilde{x}\, .
\end{multline*}
Since $g_2\in L^1(\R^{n-1})$, the second integral in the last inequality is finite and independent of $r$. Moreover:
\[
\int_{\R^{n-1}}\lambda(\tilde{x},s_r(\tilde{x}))
-\lambda(\tilde{x},s(\tilde{x})) \, d\tilde{x}=\int_{\R^{n-1}}\int_{s(\tilde{x})}^{s_r(\tilde{x})}\frac{\partial\lambda}{\partial x_n}(\tilde{x},x_n) \, dx_nd\tilde{x}\, .
\]
Since $\frac{\partial\lambda}{\partial x_n}\in L^1(\R^n)$, by dominated convergence, all that remains is to show that for a.e. $\tilde{x}\in\R^{n-1}$: $s_r(\tilde{x}) \underset{r\downarrow 0}{\longrightarrow} s(\tilde{x})$. Since for each $\tilde{x}$ the sequence $s_r(\tilde{x})$ is decreasing, it converges to some $s_\infty(\tilde{x}) \geq s(\tilde{x})$. Let us prove that, for a.e. $\tilde{x}\in\R^{n-1}$, $s_\infty(\tilde{x})=s(\tilde{x})$. To do so, first note that, since $A$ is decreasing, we have:
\[
0 \leq \mu(A_r) - \mu(A) \leq \Pro \left[ X - \sum_{i=1}^n r |v_i|e_i \in A \right] - \Pro \left[ X \in A \right] \, ,
\]
where $X \sim \calN(0,\Sigma)$. By dominated convergence, the right hand side tends to $0$ when $r\rightarrow 0$. Now, note that:
\[
\mu(A_r)-\mu(A) = \int_{\R^{n-1}} \int_{s(\tilde{x})}^{s_r(\tilde{x})} \lambda(\tilde{x},x_n) \, dx_n d\tilde{x} \, .
\]
Hence, by Fatou's lemma:
\[
0 = \int_{\R^{n-1}} \int_{s(\tilde{x})}^{s_\infty(\tilde{x})} \lambda(\tilde{x},x_n) \, dx_n d\tilde{x} \, .
\]
Since the $\lambda$ takes only positive values, this implies that for a.e. $\tilde{x}\in\R^{n-1}$, $s_\infty(\tilde{x})=s(\tilde{x})$.
\end{proof}

\section{An estimate on the infinite operator norm of square root of infinite matrices: the proof of Proposition \ref{p.square_root_estimate}}\label{s.sqrt}

The goal of this section is to prove Proposition~\ref{p.square_root_estimate}. We assume that $f$ satisfies Condition~\ref{a.super-std} and Condition~\ref{a.fourier} for some $\alpha>5$. In particular, the Fourier transform of $\kappa$ takes only positive values. Moreover, we assume that $\kappa$ is $C^3$ and there exists $C<+\infty$ such that for every $\beta \in \N^2$ such that $\beta_1 + \beta_2 \leq 3$, we have:
\begin{equation}\label{e.decay_derivative}
|\partial^\beta \kappa(x)| \leq C |x|^{-\alpha} \, ,
\end{equation}
for some $\alpha > 5$. In this subsection, we never use the fact that our Gaussian field is non-degenerate.\\

Recall that for each $\eps>0$, $\calT^\eps$ is the lattice $\calT$ scaled by a factor $\eps$, that $\calV^\eps$ is the set of vertices of $\calT^\eps$, and that $K^\eps$ is the restriction of $K$ to $\calV^\eps$. We begin by observing that the face-centered square lattice $\calT$, when rotated by $\frac{\pi}{4}$ and rescaled by $\sqrt{2}$, has the same vertices as $\Z^2$. Since by Condition~\ref{a.fourier} our field is invariant by $\frac{\pi}{4}$-rotation, \textbf{we will} simply \textbf{replace $\mathcal{T}$ by $\Z^2$ throughout the rest of this section}.

Let $\T^2$ be the flat 2D torus corresponding to the circle of length $2\pi$. Throughout this section, we will identify $\lambda \Z^2$-periodic functions on $\R^2$ (for some $\lambda>0$) with functions on $\lambda \T^2$ and their integrals over the box $[-\lambda\pi,\lambda\pi]^2$ with integrals over $\lambda\T^2$. We will use the following convention for the Fourier transform:
\[
\forall\xi\in\R^2,\ \hat{\kappa}(\xi)=\frac{1}{4\pi^2}\int_{\R^2} e^{-i\langle\xi,x\rangle}\kappa(x)dx \, .
\]

Let us begin with a sketch of the construction of the square root $\sqrt{K^\eps}$:
\bi 
\item[1.] First note that if we find some symmetric function $\eta_\eps \, : \, \Z^2 \rightarrow \R^2$ such that:
\[
\forall m \in \Z^2, \, \eta_\eps * \eta_\eps(m) := \sum_{m' \in \Z^2} \eta_\eps(m') \eta_\eps(m-m') = \kappa(\eps m) \, ,
\]
then $(\eps m_1, \eps m_2) \in \eps \Z^2 \longmapsto \eta_\eps(m_1-m_2)$ is a symmetric square root of $K^\eps$.
\item[2.] Let $\kappa_\eps$ be $\kappa$ restricted to $\eps \Z^2$ and let us try to construct $\eta_\eps$ above. The first idea is that, if $\eta_\eps * \eta_\eps = \kappa_\eps$, then the Fourier transform of $\eta_\eps$ should the square root of the Fourier transform of $\kappa_\eps$. In other words:
\[
\sfF(\eta_\eps) = \sqrt{\sfF(\kappa_\eps)} \, ,
\]
where $\sfF(\eta_\eps)(\xi) = \sum_{m \in \Z^2} \eta_\eps(m) e^{-i<\xi,m>}$ and similarly for $\sfF(\kappa_\eps)$.
\item[3.] Thus:
\begin{eqnarray*}
\eta_\eps(m) & = & \sfF^{-1} \left( \sqrt{\sfF(\kappa_\eps)} \right)\\
& = & \frac{1}{4\pi^2}\int_{\T^2} e^{i\langle \xi,m \rangle} \sqrt{\sfF(\kappa_\eps)}(\xi) \, d\xi \, .
\end{eqnarray*}
\item[4.] In the expression above, it seems difficult to deal with the term $\sfF(\kappa_\eps)$. To simplify the expression, we can use the Poisson summation formula and deduce that:
\[
\sfF(\kappa_\eps)(\xi) = 4\pi^2\eps^{-2}\sum_{m \in \Z^2} \hat{\kappa}(\eps^{-1}(2\pi m-\xi)) \, .
\]
For the Bargmann-Fock process, $\hat{\kappa}$ is well known since $\kappa$ is simply the Gaussian function.
\ei

There will be two main steps in the proof of Proposition~\ref{p.square_root_estimate}. First, we will make the above construction of $\eta_\eps$ rigorous by considering the four items above in the reverse order. More precisely, we will first set $\lambda_\eps$ as in Lemma~\ref{l.sqrt.0} below, then we will apply the Poisson summation formula to prove that $2\pi \eps^{-1} \lambda_\eps=\sqrt{\sfF(\kappa_\eps)}$. Next, we will define $\eta_\eps$ as in Lemma~\ref{l.sqrt.0}, we will show that $\sfF(\eta_\eps)=\sqrt{\sfF(\kappa_\eps)}$, and we will conclude that $\eta_\eps$ is a convolution square root of $\kappa_\eps$. All of this will be done in the proof Lemma~\ref{l.sqrt.0}. Secondly, we will prove estimates on $\sum_{m \in \Z^2} |\eta_\eps(m)|$ when $\kappa(x) = e^{-\frac{1}{2}|x|^2}$. This will be the purpose of Lemma~\ref{l.sqrt} (and most of the proof of this lemma will be written in a more general setting than $\kappa(x) = e^{-\frac{1}{2}|x|^2}$).\\

In the proofs, we will use a subscript $\eps$ to denote functions $: \, \Z^2 \rightarrow \R$ or to denote functions defined on $\T^2$ (or equivalently functions $2\pi \Z^2$-periodic). On the other hand, we will use a superscript $\eps$ to denote functions $: \, \eps^{-1}\Z^2 \rightarrow \R$ or to denote functions defined on $\eps^{-1}\T^2$ (or equivalently functions $2\pi\eps^{-1} \Z^2$-periodic).\\

Proposition~\ref{p.square_root_estimate} is a direct consequence of the following Lemmas~\ref{l.sqrt.0} and~\ref{l.sqrt}.

\begin{lem}\label{l.sqrt.0}
Assume that $f$ satisfies Condition~\ref{a.super-std} as well as Condition~\ref{a.std} for $\alpha > 5$. Fix $\eps > 0$ and let:
\[
\lambda_\eps \, : \, \xi \in \R^2 \mapsto \sqrt{\sum_{m\in\Z^2}\hat{\kappa}(\eps^{-1}(\xi-2\pi m))} \, .
\]
Then, $\lambda_\eps$ is a $C^3$, positive, even and $2\pi \Z^2$-periodic function. Next, define:
\[
\eta_\eps \, : \, m \in \Z^2 \mapsto \frac{1}{\eps(2\pi)}\int_{\T^2}e^{i\langle m,\xi\rangle}\lambda_\eps(\xi) \, d\xi \, .
\]
Then, $(\eps m_1, \eps m_2) \in \eps \Z^2 \longmapsto \eta_\eps(m_1-m_2)$ is a symmetric square root of $K^\eps$ and we have:
\[
\sum_{m \in \Z^2} |\eta_\eps(m)| < +\infty \, .
\]
\end{lem}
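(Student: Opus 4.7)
The plan is to construct a convolution square root $\eta_\eps$ of the sequence $n \mapsto \kappa(\eps n)$ on $\Z^2$ via Fourier analysis on $\T^2$, and then realise $\sqrt{K^\eps}$ as the Toeplitz-type matrix $(\eps m_1,\eps m_2) \mapsto \eta_\eps(m_1-m_2)$. I would carry this out in four stages.

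First, I verify the asserted properties of $\lambda_\eps$. Strict positivity of $\hat\kappa$ (Condition~\ref{a.fourier}) makes each term of the defining sum strictly positive, so $\lambda_\eps$ is well defined, positive and real. Periodicity is built into the formula, and evenness follows from $\hat\kappa$ being even (since $\kappa$ is real and symmetric). For the $C^3$ regularity, the decay $|\partial^\beta\kappa(x)|\leq C|x|^{-\alpha}$ for $|\beta|\leq 3$ with $\alpha>5$ ensures that $x \mapsto x^\gamma\,\partial^\beta\kappa(x)$ lies in $L^1(\R^2)$ for $|\beta|,|\gamma|\leq 3$; integration by parts then gives $|\partial^\gamma\hat\kappa(\xi)|\leq C'(1+|\xi|)^{-3}$ for $|\gamma|\leq 3$. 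This estimate makes $\sum_{m\in\Z^2} \partial^\gamma_\xi\hat\kappa(\eps^{-1}(\xi-2\pi m))$ converge absolutely and uniformly on $\T^2$, so $\lambda_\eps^2\in C^3$, and strict positivity transfers this regularity to $\lambda_\eps$ itself.

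Second, I apply the Poisson summation formula to $g(x):=\kappa(\eps x)$, whose standard Fourier transform is $4\pi^2\eps^{-2}\hat\kappa(\cdot/\eps)$. This yields, for every $\xi\in\R^2$,
\[
\sum_{n\in\Z^2}\kappa(\eps n)\,e^{-i\langle n,\xi\rangle} \;=\; 4\pi^2\eps^{-2}\sum_{m\in\Z^2}\hat\kappa(\eps^{-1}(\xi+2\pi m)) \;=\; 4\pi^2\eps^{-2}\lambda_\eps(\xi)^2,
\]
the last equality using evenness of $\hat\kappa$ to reindex $m\mapsto -m$. Writing $(d_n)_{n\in\Z^2}$ for the Fourier coefficients of $\lambda_\eps$ on $\T^2$, the definition of $\eta_\eps$ reads $\eta_\eps(n)=\frac{2\pi}{\eps}\,d_n$, and the Fourier coefficients of $\lambda_\eps^2=\lambda_\eps\cdot\lambda_\eps$ are $(d*d)(n)$; identifying the two expressions for $\lambda_\eps^2$ gives $(d*d)(n)=\frac{\eps^2}{4\pi^2}\kappa(\eps n)$, hence
\[
(\eta_\eps*\eta_\eps)(n) \;=\; \frac{4\pi^2}{\eps^2}\,(d*d)(n) \;=\; \kappa(\eps n).
\]
Evenness and reality of $\lambda_\eps$ force $\eta_\eps$ to be even and real, so the infinite matrix $M(\eps m_1,\eps m_2):=\eta_\eps(m_1-m_2)$ is symmetric, and the convolution identity rewrites as $M^2=K^\eps$ in the sense of the paper, with absolute convergence of the matrix products being provided by the last step.

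Finally, integrating by parts three times against $e^{i\langle m,\xi\rangle}$ on $\T^2$ yields $|\eta_\eps(m)|\leq C(\eps)|m|^{-3}$ for $m\neq 0$, thanks to $\lambda_\eps\in C^3(\T^2)$, and this bound is summable on $\Z^2$. The main delicate point is the first stage: the exponent $\alpha>5$ is used in an essential way, since one needs simultaneously three continuous derivatives of $\hat\kappa$ and an $|\xi|^{-3}$ decay to dominate the $\eps^{-|\gamma|}$ factors that appear after differentiating $\hat\kappa(\eps^{-1}\cdot)$; with weaker decay on $\kappa$ the series defining $\lambda_\eps^2$ could fail to be $C^3$, and the $\ell^1$ estimate in the last step would break down.
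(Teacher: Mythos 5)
Your proposal is correct and follows essentially the same route as the paper: derive the decay $|\partial^\gamma\hat\kappa(\xi)|\lesssim|\xi|^{-3}$ from Condition~\ref{a.fourier} with $\alpha>5$ to get $\lambda_\eps\in C^3$ (positive, even, periodic), identify $4\pi^2\eps^{-2}\lambda_\eps^2$ with the discrete Fourier transform of $\kappa_\eps$ via Poisson summation, deduce $\eta_\eps*\eta_\eps=\kappa_\eps$ from the product--convolution duality of Fourier coefficients, and obtain $\ell^1$ summability from the $|m|^{-3}$ decay of Fourier coefficients of a $C^3$ function. The only (immaterial) difference is ordering: the paper first establishes $\sum_m|\eta_\eps(m)|<+\infty$ and then invokes Fourier inversion and the $\ell^1$ convolution theorem, whereas you identify the coefficients of $\lambda_\eps^2$ directly and defer the absolute-convergence check to the end.
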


\begin{lemma}\label{l.sqrt}
Assume that $\kappa(x)=e^{-\frac{1}{2}|x|^2}$. Then, there exist constants $C_0<+\infty$ and $\eps_0>0$ such that for $\eps \in ]0,\eps_0]$:
\[
  \sum_{m\in\Z^2}|\eta_\eps(m)|\leq C_0\frac{1}{\eps} \log \left(\frac{1}{\eps} \right) \, .
  \]
\end{lemma}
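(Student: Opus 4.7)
The plan is to identify $\eta_\eps(m)$, up to the prefactor $2\pi/\eps$, with the Fourier coefficients of $\lambda_\eps$ on $\T^2=[-\pi,\pi]^2$, so that
\[
\sum_{m\in\Z^2}|\eta_\eps(m)| \;=\; \frac{2\pi}{\eps}\sum_{n\in\Z^2}|\hat\lambda_\eps(n)|,
\]
and it suffices to prove $\sum_n|\hat\lambda_\eps(n)| = O(\log(1/\eps))$. The key structural fact, specific to the Gaussian kernel, is that $\hat\kappa(\xi)=(2\pi)^{-1}e^{-|\xi|^2/2}$ decays super-exponentially, so on $\T^2$ the function $u_\eps(\xi):=\lambda_\eps(\xi)^2=\sum_{m'\in\Z^2}\hat\kappa(\eps^{-1}(\xi-2\pi m'))$ is essentially a single Gaussian bump of width $\eps$ centred at the origin, with $L^1$-mass $O(\eps^2)$ concentrated in an $\eps$-neighbourhood of~$0$.

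The first step is to prove the $L^1$ bounds $\|\partial_i^k\lambda_\eps\|_{L^1(\T^2)} = O(\eps^{2-k})$ for $k\in\{0,1,2,3\}$ and $i\in\{1,2\}$, using the regularity granted by Lemma~\ref{l.sqrt.0}. The natural route is to establish the pointwise bound
\[
|\partial_i^k\lambda_\eps(\xi)| \leq C\,\eps^{-k}\,Q_k\!\bigl(|\xi-2\pi m^\star(\xi)|/\eps\bigr)\,\sqrt{u_\eps(\xi)},
\]
where $m^\star(\xi)\in\Z^2$ is a lattice point closest to $\xi/(2\pi)$ and $Q_k$ is a polynomial of degree~$k$; the $L^1$ bound then follows by integrating and substituting $\xi\mapsto 2\pi m^\star+\eps\xi$ to reduce to a universal Gaussian integral. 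The pointwise bound itself is obtained by differentiating $\lambda_\eps=\sqrt{u_\eps}$ via the chain rule: since $u_\eps$ is a positive sum of Gaussian bumps, each derivative $\partial^\alpha u_\eps$ remains a sum of Gaussians weighted by polynomials in $\eps^{-1}(\xi-2\pi m)$, so the quotients $u_\eps^{-1}\partial^\alpha u_\eps$ are polynomially controlled in $|\xi-2\pi m^\star|/\eps$ despite $u_\eps$ itself being very small.

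The second step combines two Fourier-coefficient bounds: trivially $|\hat\lambda_\eps(n)|\leq (4\pi^2)^{-1}\|\lambda_\eps\|_{L^1(\T^2)} = O(\eps^2)$, and integration by parts in $\xi_i$ gives, for $n_i\neq 0$ and $k\in\{1,2,3\}$, $|\hat\lambda_\eps(n)|\leq C\,\|\partial_i^k\lambda_\eps\|_{L^1(\T^2)}/|n_i|^k = O(\eps^{2-k}/|n_i|^k)$. Splitting $\Z^2$ at the scale $|n|\sim 1/\eps$ and using the trivial bound inside this ball and the integration-by-parts bound (integrated in the coordinate with larger $|n_i|$) outside, one obtains $\sum_n|\hat\lambda_\eps(n)| = O(\log(1/\eps))$; the logarithm enters through the two-dimensional sum $\sum_{|n|>1/\eps}|n_i|^{-k}$ as one balances the exponents. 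Multiplying by $2\pi/\eps$ yields the claimed estimate.

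The main obstacle is the first step, since naive differentiation of $\sqrt{u_\eps}$ produces denominators $u_\eps^{(2j-1)/2}$ at order $j$, and $u_\eps$ can be of order $e^{-\pi^2/\eps^2}$ at corners of $\T^2$. The remedy exploits the Gaussian structure: if $u_\eps(\xi)$ is small, it is because every term in the defining sum is small at $\xi$, and the corresponding derivative terms differ only by polynomial prefactors, so the potentially bad denominators cancel term by term rather than globally. This is the only place in the argument that relies on the precise Gaussian form of $\kappa$; the rest—the Fourier-theoretic combination of trivial and integration-by-parts estimates—works for any smooth kernel satisfying Condition~\ref{a.fourier} with sufficient decay.
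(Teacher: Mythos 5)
Your proposal is correct and follows essentially the same route as the paper's: after the change of variables $\rho^\eps(\xi)=\lambda_\eps(\eps\xi)$, your $L^1$ bounds $\|\partial_i^k\lambda_\eps\|_{L^1(\T^2)}=O(\eps^{2-k})$ are precisely the paper's statement that $\Upsilon(\eps)$ stays bounded (Lemma~\ref{l.sqrt.3}), proved there by the same termwise absorption of the dangerous denominators $u_\eps^{(2j-1)/2}$ using the Gaussian structure (Claim~\ref{cl.algebra}), and your combination of the trivial bound with integration-by-parts bounds and a split of the sum at $|n|\sim 1/\eps$ is the content of Lemma~\ref{l.sqrt.2}. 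A minor remark: your accounting of where the logarithm enters is loose --- with the third-order bound in the tail and the trivial bound in the bulk one actually gets $O(1/\eps)$ --- but the paper's own proof carries the same harmless slack, and the stated upper bound follows either way.
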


\begin{proof}[Proof of Lemma \ref{l.sqrt.0}]
First note that~\eqref{e.decay_derivative} implies that $\hat{\kappa}$ is $C^3$ and that for every $\beta \in \N^2$ such that $\beta_1+\beta_2 \leq 3$, we have:
\begin{equation}\label{e.F(k)}
|\partial^\beta \hat{\kappa}(\xi)| \leq C' |\xi|^{-3} \, ,
\end{equation}
for some $C'=C'(\kappa)<+\infty$. This implies that the series under the square root of the definition of $\lambda_\eps$ converges in $C^3$-norm towards a $C^3$ function. Moreover, this series is clearly $2\pi \Z^2$-periodic, even, and positive (since $\hat{\kappa}$ takes only positive values). Thus, $\lambda_\eps$ is well defined and also satisfies these properties.\\

Let us now prove the second part of the lemma. For each $\eps>0$ let $\kappa_\eps:\Z^2\rightarrow\R$ be defined as $\kappa_\eps(m)=\kappa(\eps m)$. The discrete Fourier transform of $\kappa_\eps$ is:
\[
\sfF(\kappa_{\eps}) : \xi \in \T^2 \longmapsto \sum_{m\in\Z^2}\kappa_\eps(m)e^{-i\langle\xi,m\rangle}
\]
(since $\kappa$ and its derivatives of order up to $3$ decay polynomially fast with an exponent larger than $5$, the above series converges in $C^3$-norm). Now, since $\kappa$ and $\hat{\kappa}$ decay polynomially fast with an exponent larger than $2$, we can apply the Poisson summation formula (see\footnote{Be aware that the conventions used in \cite{grafakos} are different from ours.} Theorem~3.1.17 of \cite{grafakos}) which implies that:
\[
\forall \xi \in \T^2, \,  \sfF(\kappa_\eps)(\xi)=4\pi^2\eps^{-2}\sum_{m\in\Z^2}\hat{\kappa}(\eps^{-1}(2\pi m-\xi)) = 4\pi^2\eps^{-2} \lambda_\eps(\xi)^2 \, .
\]

As a result:

\[
\eta_\eps(m)=\frac{1}{4\pi^2}\int_{\T^2}\sqrt{\sfF(\kappa_\eps)}(\xi) e^{i\langle m,\xi\rangle}d\xi \, .
\]
In other words, the  $\eta_\eps(m)$'s are the Fourier coefficients of the $C^3$, positive and $2\pi \Z^2$-periodic function $\sqrt{\sfF(\kappa_\eps)}$, which implies in particular that $|\eta_\eps(m)| \leq C'' |m|^{-3}$ for some $C''=C''(\kappa,\eps)<+\infty$. As a result:
\begin{equation}\label{e.finite_sum}
\sum_{m \in \Z^2}|\eta_\eps(m)|<+\infty \, .
\end{equation}
Thanks to~\eqref{e.finite_sum}, we can apply the Fourier inversion formula (see for instance Proposition~3.1.14 of~\cite{grafakos}) which implies that:
\begin{equation}\label{e.sfeta_et_sfkappa}
\sfF(\eta_\eps) = \sqrt{\sfF(\kappa_\eps)} \, .
\end{equation}

Now, let us use the convolution formula (see for instance  Paragraph~1.3.3 of~\cite{rud_fou}). Since $\sum_{m \in \Z^2}|\eta_\eps(m)|<+\infty$, we have:
\begin{equation}\label{e.convol_converges}
\sum_{m \in \Z^2} \sum_{m' \in \Z^2} |\eta_\eps(m')\eta_\eps(m-m')| <+\infty \, ,
\end{equation}
and:
\begin{equation}\label{e.convol}
\sfF ( \eta_\eps * \eta_\eps ) = \sfF(\eta_\eps)^2 \, ,
\end{equation}
where $\eta_\eps * \eta_\eps \, : \, m \in \Z^2 \mapsto \sum_{m' \in \Z^2} \eta_\eps(m')\eta_\eps(m-m')$.\\

We deduce from~\eqref{e.sfeta_et_sfkappa} and~\eqref{e.convol} that $\sfF (\eta_\eps * \eta_\eps  ) = \sfF(\kappa_\eps)$. Since, by the dominated convergence theorem, the Fourier coefficients of $\sfF ( \eta_\eps * \eta_\eps )$ are the $ \eta_\eps * \eta_\eps (m)$'s and the Fourier coefficients of $\sfF(\kappa_\eps)$ are the $\kappa_\eps(m)$'s, we obtain that:
\[
\eta_\eps * \eta_\eps = \kappa_\eps \, .
\]
This is equivalent to saying that $(\eps m_1, \eps m_2) \in \eps \Z^2 \longmapsto \eta_\eps(m_1-m_2)$ is a symmetric square root of $K^\eps$.
\end{proof}

The proof of Lemma~\ref{l.sqrt} is split into two sub-lemmas:

\begin{lemma}\label{l.sqrt.2}
Assume that $f$ satisfies Condition~\ref{a.super-std} and Condition~\ref{a.std} for $\alpha > 5$. In this lemma, we work with the function:
\[
\rho^\eps \, : \, \xi \in \R^2 \mapsto \lambda(\eps\xi) = \sqrt{\sum_{m\in\Z^2}\hat{\kappa}(\xi-2\pi \eps^{-1} m))} \, .
\]
For each $\eps>0$, let:
\[
\Upsilon(\eps)=\max\Big(\int_{\eps^{-1}\T^2}|\rho^\eps(\xi)|d\xi,\int_{\eps^{-1}\T^2}|\Delta \rho^\eps(\xi)|d\xi,\int_{\eps^{-1}\T^2}|\partial_1\Delta \rho^\eps(\xi)|+|\partial_2\Delta \rho^\eps(\xi)|d\xi\Big).\]
Then, for every $\eps > 0$, $\Upsilon(\eps) < +\infty$. Moreover, there exist an absolute constants $C_1<+\infty$ and a constant $\eps_1=\eps_1(\kappa) > 0$ such that for every $\eps \in ]0,\eps_1]$ we have:
  \[
  \sum_{m\in\Z^2}|\eta_\eps(m)|\leq C_1\Upsilon(\eps) \frac{1}{\eps} \log \left(\frac{1}{\eps} \right) \, .
  \]
\end{lemma}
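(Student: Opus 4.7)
My plan is to first change variables in the definition of $\eta_\eps$ so that it becomes an oscillatory integral of $\rho^\eps$ on $\eps^{-1}\T^2$, then to extract decay estimates on $|\eta_\eps(m)|$ by integration by parts on that torus, and finally to sum the bounds using a three-way split of $\Z^2$. The substitution $\xi\mapsto \eps\xi$ in the formula of Lemma~\ref{l.sqrt.0} gives
\[
\eta_\eps(m) = \frac{\eps}{2\pi}\int_{\eps^{-1}\T^2} e^{i\eps\langle m,\xi\rangle}\,\rho^\eps(\xi)\,d\xi,
\]
so our task reduces to estimating this periodic Fourier coefficient. To see that $\Upsilon(\eps)<+\infty$ for each fixed $\eps>0$, I would first verify that $\hat\kappa$ is $C^3$ with $\partial^\beta\hat\kappa(\xi)=O(|\xi|^{-3})$ for $|\beta|\leq 3$: this follows from Condition~\ref{a.fourier} with $\alpha>5$ by integrating by parts three times in $\hat\kappa(\xi)=\frac{1}{4\pi^2}\int \kappa(x)e^{-i\langle\xi,x\rangle}dx$ (here I need $|x|^3|\partial^\gamma\kappa|\in L^1$ for $|\gamma|\leq 3$, which is exactly where $\alpha>5$ is used). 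Consequently the periodic sum $\sum_{m}\hat\kappa(\cdot-2\pi\eps^{-1}m)$ converges in $C^3$-norm to a strictly positive function on the compact torus $\eps^{-1}\T^2$, whence $\rho^\eps$ is $C^3$ and the three integrals defining $\Upsilon(\eps)$ are finite.

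The next step is the integration by parts. On the torus there are no boundary terms, so from $\Delta_\xi e^{i\eps\langle m,\xi\rangle}=-\eps^2|m|^2 e^{i\eps\langle m,\xi\rangle}$ and $\partial_j\Delta_\xi e^{i\eps\langle m,\xi\rangle}=-i\eps^3 m_j|m|^2 e^{i\eps\langle m,\xi\rangle}$ I would obtain
\[
\eps^2|m|^2\,|\eta_\eps(m)| \le \tfrac{\eps}{2\pi}\int_{\eps^{-1}\T^2}|\Delta\rho^\eps|\,d\xi \le \tfrac{\eps}{2\pi}\Upsilon(\eps),
\]
and, for $j=1,2$,
\[
\eps^3|m_j|\,|m|^2\,|\eta_\eps(m)| \le \tfrac{\eps}{2\pi}\int_{\eps^{-1}\T^2}|\partial_j\Delta\rho^\eps|\,d\xi \le \tfrac{\eps}{2\pi}\Upsilon(\eps).
\]
Adding the two inequalities for $j=1,2$ and using $|m_1|+|m_2|\ge|m|$, then combining with the trivial bound $|\eta_\eps(m)|\le\tfrac{\eps}{2\pi}\Upsilon(\eps)$, I get a single unified estimate of the form
\[
|\eta_\eps(m)| \le \frac{C\,\eps\,\Upsilon(\eps)}{1+\eps^2|m|^2+\eps^3|m|^3}
\qquad(m\in\Z^2).
\]

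Finally I split the sum into three regimes according to $|m|$. The contribution of $|m|\le 1$ is bounded by a uniform constant times $\eps\,\Upsilon(\eps)$. In the intermediate regime $1<|m|\le\eps^{-1}$, I use the 2nd-order bound $|\eta_\eps(m)|\le \frac{\Upsilon(\eps)}{2\pi\eps|m|^2}$; since $\sum_{1<|m|\le\eps^{-1}}|m|^{-2}\le C\log(\eps^{-1})$ on $\Z^2$ (a standard radial estimate), this range contributes $O(\Upsilon(\eps)\log(1/\eps)/\eps)$. In the tail $|m|>\eps^{-1}$, I use the 3rd-order bound $|\eta_\eps(m)|\le \frac{C\Upsilon(\eps)}{\eps^2|m|^3}$; since $\sum_{|m|>\eps^{-1}}|m|^{-3}\le C\eps$, this regime contributes $O(\Upsilon(\eps)/\eps)$. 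Summing the three pieces yields the stated bound $C_1\Upsilon(\eps)\eps^{-1}\log(1/\eps)$ for all sufficiently small $\eps$.

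The main technical point, and the explanation for the logarithm in the statement, is precisely the intermediate regime: the second-order bound is sharp there but the associated sum $\sum|m|^{-2}$ diverges logarithmically in a dyadic sense on $\Z^2$, while the third-order bound (needed to tame the tail, since $\sum|m|^{-2}$ fails to converge globally) is too weak in this middle range. The only genuinely delicate ingredient in the plan is the justification that $\rho^\eps$ has enough regularity for all three integrals in $\Upsilon(\eps)$ to make sense and for the IBP to be legitimate; once the pointwise and $L^1$ convergence of the differentiated periodic series are secured from $\alpha>5$, everything else is book-keeping.
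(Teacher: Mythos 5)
Your proof is correct and follows essentially the same route as the paper's: the change of variables to $\eps^{-1}\T^2$, integration by parts via $\Delta$ and a third derivative to get the three bounds controlled by $\Upsilon(\eps)$, and the split of the sum at $|m|\sim\eps^{-1}$ producing the $\frac{1}{\eps}\log\frac{1}{\eps}$ factor. The only (cosmetic) differences are that you use $\partial_1\Delta$ and $\partial_2\Delta$ separately and add, where the paper uses $(\partial_1\pm\partial_2)\Delta$ with the factor $|m_1|+|m_2|$, and you spell out the $C^3$ regularity and decay of $\hat{\kappa}$ that the paper imports from Lemma~\ref{l.sqrt.0}.
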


\begin{lemma}\label{l.sqrt.3}
Assume that $\kappa(x)=e^{-\frac{1}{2}|x|^2}$. Then, there exists $\eps_2>0$ such that $\sup_{\eps \in ]0,\eps_2]}\Upsilon(\eps) < +\infty$.
\end{lemma}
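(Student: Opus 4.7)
The plan is to bound each of the three integrals in the definition of $\Upsilon(\eps)$ uniformly in $\eps$. For the Bargmann--Fock kernel $\kappa(x) = e^{-|x|^2/2}$, a direct computation with the paper's Fourier convention gives $\hat\kappa(\xi) = (2\pi)^{-1} e^{-|\xi|^2/2}$. Setting $T_m^\eps(\xi) := \hat\kappa(\xi - 2\pi\eps^{-1}m)$ and $S^\eps := \sum_{m \in \Z^2} T_m^\eps$, one has $\rho^\eps = \sqrt{S^\eps}$. The engine of the proof is the elementary \emph{tiling trick}: the translates $2\pi\eps^{-1}m + \eps^{-1}\T^2$, $m \in \Z^2$, partition $\R^2$, so for any non-negative $G \in L^1(\R^2)$ and any $F \geq 0$ with $F(\xi) \leq \sum_m G(\xi - 2\pi\eps^{-1}m)$ one gets, after the change of variables $\eta = \xi - 2\pi\eps^{-1}m$, the $\eps$-free bound $\int_{\eps^{-1}\T^2} F \leq \int_{\R^2} G$.

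For the first integral this is immediate: the elementary inequality $\sqrt{\sum_m a_m} \leq \sum_m \sqrt{a_m}$ (for $a_m \geq 0$) yields $\rho^\eps(\xi) \leq (2\pi)^{-1/2} \sum_m e^{-|\xi - 2\pi\eps^{-1}m|^2/4}$, and the tiling trick gives $\int_{\eps^{-1}\T^2} \rho^\eps \leq (2\pi)^{-1/2} \int_{\R^2} e^{-|\eta|^2/4} d\eta = 2\sqrt{2\pi}$.

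For $\Delta \rho^\eps$ and $\partial_i \Delta \rho^\eps$ I would establish the pointwise bound: for every multi-index $\beta \in \N^2$ with $|\beta| \leq 3$ there exists $C_\beta < +\infty$ such that for all $\eps > 0$ and all $\xi$,
\[
|\partial^\beta \rho^\eps(\xi)| \leq C_\beta \sum_{m \in \Z^2} (1 + |\xi - 2\pi\eps^{-1}m|)^{|\beta|} \, e^{-|\xi - 2\pi\eps^{-1}m|^2/8} \, .
\]
The tiling trick then reduces $\int_{\eps^{-1}\T^2} |\partial^\beta \rho^\eps|$ to a finite Gaussian moment over $\R^2$. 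To prove this bound I would differentiate $\rho^\eps = (S^\eps)^{1/2}$ via the chain rule: $\partial^\beta \rho^\eps$ is a linear combination of terms of the form $\bigl(\prod_{j=1}^\ell \partial^{\gamma_j} S^\eps \bigr) / (S^\eps)^{\ell - 1/2}$, with $\sum_j \gamma_j = \beta$ and $1 \leq \ell \leq |\beta|$. Writing $z_m := \xi - 2\pi\eps^{-1}m$, the Gaussian derivative identity $|\partial^\gamma T_m^\eps| \leq P_\gamma(|z_m|) T_m^\eps$ (for an explicit polynomial $P_\gamma$ of degree $|\gamma|$) gives $|\partial^{\gamma_j} S^\eps| \leq \sum_m P_{\gamma_j}(|z_m|) T_m^\eps$. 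To kill the denominator $(S^\eps)^{\ell - 1/2}$, I would iterate the Cauchy--Schwarz inequality
\[
\sum_m A_m T_m^\eps \leq \Bigl( \sum_m A_m^2 T_m^\eps \Bigr)^{1/2} \bigl( S^\eps \bigr)^{1/2} \, ,
\]
which manufactures factors of $\sqrt{S^\eps}$ in the numerator that cancel the denominator down to at most a non-negative power of $\sqrt{S^\eps}$ (controlled since $S^\eps$ is uniformly bounded above), leaving an expression of the form $\bigl(\sum_m R(|z_m|) T_m^\eps \bigr)^{1/p}$ with $p \geq 1$; the subadditivity $(\sum_m a_m)^{1/p} \leq \sum_m a_m^{1/p}$ then yields the desired pointwise bound.

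The main technical work lies in the case $|\beta| = 3$, where the worst term $(\partial S^\eps)^3 / (S^\eps)^{5/2}$ requires a careful sequence of Cauchy--Schwarz iterations to fully eliminate the denominator; however, the scheme is always the same --- extract $\sqrt{S^\eps}$ factors via Cauchy--Schwarz, then apply subadditivity of $x \mapsto x^{1/p}$ --- and no genuinely new idea is needed beyond the two inequalities above.
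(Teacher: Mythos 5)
Your proof is correct, and it reaches the same two milestones as the paper (a pointwise Gaussian-type bound on $P_j\rho^\eps$, then an $\eps$-uniform integral bound) by genuinely different means. For the pointwise bound, the paper does not go through Fa\`a di Bruno and iterated Cauchy--Schwarz: it writes $(\rho^\eps)^5P_j\rho^\eps$ (with $P_1=\mathrm{Id}$, $P_2=\Delta$, $P_3=2\partial_1\Delta$, the remaining third derivative being handled by symmetry) explicitly as a triple sum $\sum_{m_1,m_2,m_3}Q_j(\cdot)\prod_{i=1}^3e^{-\frac12|\xi-2\pi\eps^{-1}m_i|^2}$ and kills the denominator with the crude lower bound $\rho^\eps(\xi)\ge e^{-\frac14|\xi-2\pi\eps^{-1}m|^2}$, valid for every single $m$; distributing $(\rho^\eps)^{-5}$ as $\prod_i(\rho^\eps)^{-5/3}$ over the three factors turns each $e^{-\frac12|\cdot|^2}$ into $e^{-\frac1{12}|\cdot|^2}$ and yields $|P_j\rho^\eps(\xi)|\le C\bigl(\sum_me^{-\frac1{13}|\xi-2\pi\eps^{-1}m|^2}\bigr)^3$. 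Your Cauchy--Schwarz iteration accomplishes the same denominator-killing from inside the sums and is more systematic (one checks that for the worst term $(\partial S^\eps)^3/(S^\eps)^{5/2}$ the leftover power of $S^\eps$ goes $-1\to-\frac14\to+\frac18$, so the iteration does terminate), at the small cost that for $\ell\ge2$ you end up with a \emph{product} of expressions $\bigl(\sum_mR(|z_m|)T_m^\eps\bigr)^{1/p}$ rather than a single one; you then need to bound all but one factor by a uniform constant, which does hold for $\eps\le1$ since the lattice $2\pi\eps^{-1}\Z^2$ has spacing at least $2\pi$, but is worth saying explicitly. For the integration step, the paper splits the periodized sum into $|m|\le1$ (at most $5$ terms) and $|m|\ge2$ (uniformly exponentially small on the fundamental domain), whereas your unfolding identity $\int_{\eps^{-1}\T^2}\sum_mG(\xi-2\pi\eps^{-1}m)\,d\xi=\int_{\R^2}G$ is cleaner and delivers the $\eps$-uniformity in one line. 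Both routes are sound; yours is arguably the more transparent one, and the tiling trick in particular would simplify the paper's conclusion.
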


Lemma~\ref{l.sqrt.2} follows easily from appropriate integration by parts. The proof of Lemma~\ref{l.sqrt.3} is just straightforward elementary computation and uses very crude estimates throughout.

\begin{proof}[Proof of Lemma \ref{l.sqrt.2}]
First note that $\Upsilon(\eps) < +\infty$ comes from the fact that (by Lemma~\ref{l.sqrt.0}) $\lambda_\eps \in C^3(\T^2)$, hence $\rho^\eps \in C^3(\eps^{-1}\T^2)$. Next, note that by an obvious change of variable, we have:
\[
\eta_\eps(m) =\frac{\eps}{2\pi}\int_{\eps^{-1}\T^2}e^{i\langle\eps m,\xi\rangle}\rho^\eps(\xi) \, d\xi \, .
\]
Hence, $\eta_\eps(0) \leq \frac{\eps}{2\pi}\int_{\eps^{-1}\T^2}|\mu_\eps(\xi)|d\xi \leq \frac{\eps}{2\pi} \Upsilon(\eps)$. Now, let $m \neq 0$. By integration by parts, we have:
\begin{equation}\label{e.sqrt_split_estimates_1}
  \eta_\eps(m)=\frac{1}{2\pi\eps |m|^2}\int_{\eps^{-1}\T^2}\Delta \rho^\eps(\xi)e^{i\langle\eps m,\xi\rangle}d\xi \, ,
\end{equation}
and:
\begin{equation}\label{e.sqrt_split_estimates_2}
  \eta^\eps(m)=\frac{1}{i2\pi\eps^2 |m|^2(m_1\pm m_2)}\int_{\eps^{-1}\T^2}(\partial_1\pm\partial_2)\Delta \rho^\eps(\xi)e^{i\langle\eps m,\xi\rangle}d\xi \, .
\end{equation}
In~\eqref{e.sqrt_split_estimates_2}, since $m\neq 0$, at least one of the two expressions is well defined. Thus, for each $m\in\Z^2$:
\[
|\eta_\eps(m)|\leq\frac{1}{\eps}\frac{1}{2\pi}\Upsilon(\eps)\min\Big(\eps^2,\frac{1}{|m|^2},\frac{1}{\eps|m|^2(|m_1|+|m_2|)}\Big) \, .
\]
This implies that:
\begin{eqnarray*}
  \sum_{m \in \Z^2} |\eta_\eps(m)|&=& |\eta_\eps(0)|+\sum_{m \in \Z^2, \, 0<|m|\leq\eps^{-1}} |\eta_\eps(m)| + \sum_{m \in \Z^2, \, |m|>\eps^{-1}} |\eta_\eps(m)|\\
  &\leq&\frac{1}{\eps}\frac{1}{2\pi} \Upsilon(\eps) \left( \eps+\frac{1}{\eps}\sum_{0<|m|\leq\eps^{-1}}\frac{1}{|m|^2}+\frac{1}{\eps^2}\sum_{|m|>\eps^{-1}}\frac{1}{|m|^2(|m_1|+|m_2|)}\right)\\
  &\leq& C'' \Upsilon(\eps) \left(\eps+\frac{1}{\eps}\log \left( \frac{1}{\eps} \right)+ \frac{1}{\eps} \right)\, ,
\end{eqnarray*}
for some $C''<+\infty$, all this being valid for small enough values of $\eps>0$.
\end{proof}

\begin{proof}[Proof of Lemma \ref{l.sqrt.3}]
Since $\kappa(x)=e^{-\frac{1}{2}|x|^2}$ the Fourier transform of $\kappa$ is
\[
\hat{\kappa} : \xi \in \R^2 \longmapsto \frac{1}{4\pi^2}\int_{\R^2} e^{-i\langle\xi,x\rangle}\kappa(x)dx=\frac{1}{2\pi}e^{-\frac{1}{2}|\xi|^2} \, ,
\]
so that
\[
\rho^\eps(\xi)=\sqrt{\sum_{m\in\Z^2}e^{-\frac{1}{2}|\xi-2\pi\eps^{-1}m|^2}} \, .
\]
Let $P_1=Id$, $P_2=\Delta$ and $P_3=2\partial_1\Delta$. Then:
  \[\Upsilon(\eps)=\max\Big(\int_{\eps^{-1}\T^2}|P_1\rho^\eps(\xi)|d\xi,\int_{\eps^{-1}\T^2}|P_2\rho^\eps(\xi)|d\xi,\int_{\eps^{-1}\T^2}|P_3\rho^\eps(\xi)|d\xi\Big).\]
  For the last argument of the max we use the fact that $\rho^\eps$ remains unchanged when switching the two coordinates of $\R^2$. We begin by justifying the following claim:

\begin{claim}\label{cl.algebra}
There exists $C'<+\infty$ such that for each $\xi\in\eps^{-1}\T^2$ and each $j\in\lbrace 1,2,3\rbrace$,
  \[|P_j\rho^\eps(\xi)|\leq C' \left( \sum_{m\in\Z^2}e^{-\frac{1}{13}|\xi-2\pi\eps^{-1}m|^2} \right)^3 \, .\]
\end{claim}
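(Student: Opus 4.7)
The plan is to combine three ingredients: a Fa\`a di Bruno expansion of $P_j\sqrt{S}$, uniform bounds on the derivatives of $S$ derived from its explicit exponential-sum formula, and a careful matching of exponents with the right-hand side $A^3 := (\sum_m e^{-|y_m|^2/13})^3$. Write $y_m = \xi - 2\pi\eps^{-1}m$, $w_m = e^{-|y_m|^2/2}$, $h_m = e^{-|y_m|^2/13}$, so that $S = \sum_m w_m = \sum_m h_m^{13/2}$, $\rho^\eps = \sqrt{S}$, and $A = \sum_m h_m$. Denoting $d = \min_m|y_m|$, one has $\max_m h_m = h_{m^*} = e^{-d^2/13}$, hence $A \geq e^{-d^2/13}$ and $S \geq e^{-d^2/2}$.

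The case $P_1 = \mathrm{Id}$ follows from the elementary inequality $\sum_m h_m^r \leq (\max_m h_m)^{r-1}\sum_m h_m$ (valid for $r \geq 1$): applied with $r = 13/2$, it gives $S \leq h_{m^*}^{11/2}A = A\,e^{-11d^2/26}$, hence $\sqrt{S} \leq \sqrt{A}\,e^{-11d^2/52}$, which is at most $A^3 = A^{1/2}\cdot A^{5/2}$ since $A^{5/2} \geq e^{-10d^2/52}$ and $-11/52 \leq -10/52$. For $P_2 = \Delta$ and $P_3 = 2\partial_1\Delta$, applying Fa\`a di Bruno to $\sqrt{\,\cdot\,}$ writes $P_j\sqrt{S}$ as a finite linear combination of terms
\[
\frac{\prod_{k=1}^n \partial^{\alpha_k}S}{S^{n-1/2}},\qquad \sum_k|\alpha_k| = J,\ \ 1 \leq n \leq J,
\]
where $J = 2$ for $P_2$ and $J = 3$ for $P_3$; for instance $\Delta\sqrt{S} = \Delta S/(2\sqrt{S}) - |\nabla S|^2/(4S^{3/2})$. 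Each $|\partial^\alpha S|$ with $1 \leq |\alpha| \leq 3$ is then bounded by writing $\partial^\alpha S(\xi) = \sum_m Q_\alpha(y_m)\,w_m$ with $Q_\alpha$ a polynomial of degree $|\alpha|$: for any $\eta > 0$ there exists $C_\eta$ with $(1+|y|)^{|\alpha|}e^{-|y|^2/2} \leq C_\eta\,e^{-(1/2-\eta)|y|^2} = C_\eta\, h_m^{13/2-13\eta}$, and the same subadditivity trick yields the key estimate
\[
|\partial^\alpha S(\xi)| \leq C_\eta \sum_m h_m^{13/2-13\eta} \leq C_\eta\, h_{m^*}^{11/2-13\eta}\, A = C_\eta\, A\, e^{-(11/26-\eta)\,d^2}.
\]

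Substituting these bounds into the Fa\`a di Bruno expansion and using $S^{n-1/2} \geq e^{-(n-1/2)d^2/2}$, a generic term is bounded by $C\,A^n\,\exp\bigl(d^2[\,n/13 + n\eta - 1/4\,]\bigr)$. Dividing by $A^3$ and absorbing the factor $A^{n-3}$ via $A^{3-n} \geq e^{-(3-n)d^2/13}$ will leave the residual exponent $-1/52 + n\eta$; choosing $\eta < 1/156$ (e.g.\ $\eta = 1/200$) makes this negative for $n\in\{1,2,3\}$, so each term is $\leq C\,A^3$ and the claim follows. The main difficulty will be precisely this exponent bookkeeping: the choice $1/13$ in the right-hand side is what makes the decay $h_{m^*}^{11/2}$ produced by the subadditivity step exactly compensate the potential singularity $S^{-(n-1/2)}$, leaving the slack $e^{-d^2/52}$ which absorbs the unavoidable loss from the polynomial prefactors $Q_\alpha$. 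A naive bound such as $|\partial^\alpha S| \leq C\sum_m e^{-|y_m|^2/3}$ would instead produce a factor $e^{d^2(n/6-1/4)}$ that grows in $d$ for $n \geq 2$ and is therefore too weak.
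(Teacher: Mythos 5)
Your argument is correct. It shares the paper's core mechanism---clear the powers of $S=(\rho^\eps)^2$ appearing in denominators using the trivial lower bound $S\geq e^{-\frac{1}{2}|\xi-2\pi\eps^{-1}m|^2}$, and absorb the polynomial prefactors produced by differentiating the Gaussians by degrading the exponent from $\tfrac12$ to something slightly smaller, the constant $\tfrac1{13}$ being chosen so that just enough slack survives---but the bookkeeping is organized quite differently. The paper multiplies through by $(\rho^\eps)^5$ once and for all, writes $(\rho^\eps)^5P_j\rho^\eps$ as a single triple sum over $(m_1,m_2,m_3)$ of polynomials times $\prod_{i=1}^3 e^{-\frac12|\xi-2\pi\eps^{-1}m_i|^2}$, and distributes $S^{-5/2}=\prod_{i=1}^3S^{-5/6}$ by invoking the lower bound with a \emph{different} lattice point $m_i$ for each of the three factors; the resulting triple sum then factors exactly into the cube on the right-hand side. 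You instead keep the Fa\`a di Bruno terms $\prod_k\partial^{\alpha_k}S/S^{n-1/2}$ with their varying denominators, collapse each derivative sum to $(\max_m h_m)^{r-1}A$ via subadditivity, and reconstruct $A^3$ at the end from the single scalar $d=\min_m|y_m|$. Your version avoids exhibiting the triple-sum identity explicitly, at the price of the $\eta$-bookkeeping; the paper's version is more rigid (the exponent $5$ is dictated by giving all derivative orders up to $3$ a common denominator) but factorizes the cube for free. Your arithmetic checks out: the generic term divided by $A^3$ carries the exponent $d^2(-1/52+n\eta)$, negative for $n\leq3$ once $\eta<1/156$, and all constants ($C_\eta$ and the coefficients of the universal Hermite-type polynomials $Q_\alpha$) are uniform in $\eps$ and $\xi$, as the claim requires.
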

\begin{proof}
Elementary algebra shows that for each $j\in\lbrace 1,2,3\rbrace$, there exists a polynomial function $Q_j:\R^2\times\R^2\times\R^2\rightarrow\R$ of degree at most $j$ such that for each $\xi\in\eps^{-1}\T^2$, $\rho^\eps(\xi)^5P_j\rho^\eps(\xi)$ equals:
\begin{equation}\label{e.sqrt_derivatives}
\sum_{m_1,m_2,m_3\in\Z^2}Q_j(\xi-2\pi\eps^{-1}m_1,\xi-2\pi\eps^{-1}m_2,\xi-2\pi\eps^{-1}m_3)\prod_{i=1}^3e^{-\frac{1}{2} |\xi-2\pi\eps^{-1}m_i|^2} \, .
\end{equation}
By using the very crude bound $\forall m \in \Z^2, \, \rho^\eps(\xi) \geq e^{-\frac{1}{4}|\xi-2\pi\eps^{-1}m|^2}$, we obtain that for each $\xi\in\eps^{-1}\T^2$, $|P_j\rho^\eps(\xi)|$ is at most:
\begin{align*}
& \sum_{m_1,m_2,m_3 \in \Z} |Q_j(\xi-2\pi\eps^{-1}m_1,\xi-2\pi\eps^{-1}m_2,\xi-2\pi\eps^{-1}m_3)|  \prod_{i=1}^3 e^{- \left( \frac{1}{2} - \frac{5}{3} \times \frac{1}{4} \right) |\xi-2\pi\eps^{-1}m_i|^2}\\
& = \sum_{m_1,m_2,m_3 \in \Z} |Q_j(\xi-2\pi\eps^{-1}m_1,\xi-2\pi\eps^{-1}m_2,\xi-2\pi\eps^{-1}m_3)| \prod_{i=1}^3 e^{-\frac{1}{12}|\xi-2\pi\eps^{-1}m_i|^2} \, .
\end{align*}
Now, note that there exists a constant $C'<+\infty$ such that, for each $j \in \{ 1,2,3\}$, for each $m_1,m_2,m_3\in\Z^2$, and for each $\xi\in\R^2$:
\[
\Big|Q_j(\xi-2\pi\eps^{-1}m_1,\xi-2\pi\eps^{-1}m_2,\xi-2\pi\eps^{-1}m_3)\prod_{i=1}^3e^{-\frac{1}{12}|\xi-2\pi\eps^{-1}m_i|^2}\Big| \leq C'\prod_{i=1}^3e^{-\frac{1}{13}|\xi-2\pi\eps^{-1}m_i|^2} \, ,
\]
and we are done.
\end{proof}
Let us use the claim to conclude. Let $m\in\Z^2$ be such that $|m|\geq 2$ and $\xi\in[-\eps^{-1}\pi,\eps^{-1}\pi]^2$. Then, $|\xi-2\pi\eps^{-1}m|\geq\pi\eps^{-1}|m|$. Therefore:
\begin{eqnarray*}
\sum_{m\in\Z^2,\ |m|\geq 2}e^{-\frac{1}{13}|\xi-2\pi\eps^{-1}m|^2} & \leq & \sum_{m\in\Z^2,\ |m|\geq 2} e^{-\frac{\pi}{13}\eps^{-2}|m|^2}\\
& \leq & C'' e^{-\frac{4\pi}{13}\eps^{-2}} \, ,
\end{eqnarray*}
for some $C'' < +\infty$ and if $\eps$ is sufficiently small. Moreover, $\sum_{m\in\Z^2,\ |m|\leq 1}e^{-\frac{1}{13}|\xi-2\pi\eps^{-1}m|^2} \leq 5$, hence by expanding the cubed sum in Claim~\ref{cl.algebra}, we obtain that if $\eps$ is sufficiently small, then:
\[
|P_j\rho^\eps(\xi)|\leq C''' \left( e^{-\frac{4\pi}{13}\eps^{-2}} + \sum_{m\in\Z^2,\ |m|\leq 1}e^{-\frac{1}{13}|\xi-2\pi\eps^{-1}m|^2} \right) \, ,
\]
for some $C''' < +\infty$. Finally:
\begin{eqnarray*}
\Upsilon(\eps) & \leq & C''' \int_{\eps^{-1} \T^2} \left( e^{-\frac{4\pi}{13}\eps^{-2}} + \sum_{m\in\Z^2,\ |m|\leq 1}e^{-\frac{1}{13}|\xi-2\pi\eps^{-1}m|^2} \right) d\xi\\
& \leq & C''' (2\pi\eps)^2 e^{-\frac{4\pi}{13}\eps^{-2}} +  5 C''' \int_{\R^2} e^{-\frac{1}{13}|\xi|^2} d\xi \, ,
\end{eqnarray*}
which is less than some absolute constant since we consider only $\eps$ small.
\end{proof}

\section{Sprinkling discretization scheme}\label{s.sprinkling}

In this section, we prove Proposition~\ref{p.sprinkling}. We do not rely on arguments from other sections. Recall that $\mathcal{T}=(\mathcal{V},\mathcal{E})$ is the square face-centered lattice and that $\mathcal{T}^\eps=(\mathcal{V}^\eps,\mathcal{E}^\eps)$ denotes $\calT$ scaled by $\eps$. Given an edge $e=(x,y)$, we take the liberty of writing ``$z\in e$'' as a shorthand for ``$\exists t\in[0,1]$ such that $z=ty+(1-t)x$''. For each $R>0$ and $\eps>0$, let $\mathcal{T}^\eps_R=(\calE^\eps_R,\calV^\eps_R)$ denote the sublattice of $\cal{T}^\eps$ generated by the edges $e$ that intersect $[0,2R] \times [0,R]$.\\

In this section, we never use the fact that our Gaussian field is non-degenerate. As we shall see at the end of this section, Proposition~\ref{p.sprinkling} is an easy consequence of the following approximation estimate.

\begin{proposition}\label{p.folds}
Assume that $f$ satisfies Condition \ref{a.std} and that it is a.s. $C^2$. Let $p>0$. Given $\eps>0$ and $e=(x,y) \in \calE^\eps$, we call $Fold(e)$ the event that there exists $z\in e$ such that $f(x)\geq-\frac{p}{2}$, $f(y)\geq -\frac{p}{2}$, and $f(z)<-p$. There exist constants $c_2=c_2(\kappa,p)>0$ and $\eps_0=\eps_0(\kappa,p)>0$ such that for each $\eps \in ]0,\eps_0]$ we have:
\[
\forall e \in \calE^\eps, \, \prob\left[\fold(e)\right]\leq C_1 \exp\left(-c_2\eps^{-4}\right) \, .\] 
\end{proposition}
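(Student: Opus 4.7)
My approach is to reduce $\fold(e)$ to a large-deviations event for the second derivative of $f$ along $e$, and then apply the Borell-TIS inequality.

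Parametrize $e = (x,y)$ by $g(t) := f(x + tv)$ for $t \in [0, L]$, where $L := |y-x|$ and $v := (y-x)/L$. The edges of $\calT^\eps$ have length at most $c_0\,\eps$ for an absolute constant $c_0$, so $L \leq c_0 \eps$. Since $f$ is a.s.\ $C^2$, so is $g$. On $\fold(e)$, $g(0), g(L) \geq -p/2$ while some $t_1 \in [0, L]$ satisfies $g(t_1) < -p$. Since $-p/2 > -p$, the minimum $t^*$ of $g$ on $[0, L]$ must lie in the open interval $(0, L)$, and there $g'(t^*) = 0$ and $g(t^*) < -p$. Applying the second-order Taylor-Lagrange formula between $0$ and $t^*$ produces some $\xi \in (0, t^*)$ with
\[
g(0) - g(t^*) = \tfrac{1}{2}\, g''(\xi)\,(t^*)^2.
\]
The left side is at least $p/2$, and $t^* \leq L \leq c_0 \eps$, so $g''(\xi) \geq p/(c_0 \eps)^2$. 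Hence
\[
\fold(e) \subseteq \Bigl\{ \sup_{t \in [0, L]} g''(t) \geq \tfrac{p}{c_0^2\, \eps^2} \Bigr\}.
\]

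The process $t \mapsto g''(t) = \sum_{i,j} v_i v_j (\partial_{ij} f)(x + tv)$ is a centered, a.s.\ continuous Gaussian process on $[0, L]$. Since $f \in C^2$ forces $\kappa \in C^4$ via Lemma~\ref{l.reg}, the variance $\sigma_v^2 := \var(g''(t))$ is a $t$-independent constant depending only on $\partial^4 \kappa(0)$ and on the direction $v$. Because the lattice $\calT$ has only finitely many edge directions, $\sigma^2 := \sup_v \sigma_v^2 < +\infty$ and $M_0 := \sup_v \E\bigl[\sup_{t \in [0, c_0]} g''(t)\bigr] < +\infty$ depend only on $\kappa$, and by stationarity these uniform bounds apply to $g''$ regardless of the specific edge.

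I would then invoke the Borell-TIS inequality: for every $u > 0$,
\[
\Pro\Bigl[ \sup_{t \in [0, L]} g''(t) \geq M_0 + u \Bigr] \leq \exp\bigl(-u^2 / (2 \sigma^2)\bigr).
\]
Choose $\eps_0 = \eps_0(\kappa, p) > 0$ so that $p/(c_0^2 \eps_0^2) \geq 2 M_0$. Then for every $\eps \in (0, \eps_0]$, taking $u := p/(c_0^2 \eps^2) - M_0 \geq p/(2 c_0^2 \eps^2)$ yields
\[
\Pro[\fold(e)] \leq \exp\bigl( -c_2\, \eps^{-4} \bigr), \qquad c_2 := \frac{p^2}{8\, c_0^4\, \sigma^2},
\]
uniformly in $e \in \calE^\eps$. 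The only potentially delicate step is securing edge-uniform finiteness of $\sigma^2$ and $M_0$, but this follows immediately from the finiteness of the set of edge directions of $\calT$ and the stationarity of $f$, so no further approximation or regularity input is required.
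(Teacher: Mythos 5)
Your proof is correct and follows essentially the same route as the paper: both reduce $\fold(e)$ via a Taylor-type argument to the event that the second directional derivative along the edge exceeds a threshold of order $p/\eps^2$, and then apply the Borell--TIS inequality, with uniformity in $e$ coming from stationarity and the finitely many edge directions of $\calT$. The only (cosmetic) difference is that you expand at the interior minimum where $g'=0$, while the paper applies the mean value theorem twice (first to $f$ between $x,z,y$, then to $\partial_v f$) to produce a point where $|\partial^2_{v,v}f|\gtrsim p/\eps^2$.
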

A key ingredient in proving this inequality will be the Borell-Tsirelson-Ibragimov-Sudakov (or BTIS) inequality (see Theorem 2.9 of \cite{azws}).
\begin{proof}[Proof of Proposition \ref{p.folds}]
Let us fix $e=(x,y)\in\mathcal{E}^\eps$ and consider the vector $v$ defined by $\eps v=y-x$. On the event $\fold(e,\eps)$, by Taylor's inequality applied to $f$ between points $x$, $z$ and $y$, there exist $w_1,w_2\in e$ such that $\partial_v f(w_1)>\frac{p|v|}{2\eps}$ and $\partial_v f(w_2)<-\frac{p|v|}{2\eps}$. Applying Taylor's estimate to $\partial_vf$ between $w_1$ and $w_2$ we conclude that there exists $w_3\in e$ such that $|\partial_{v,v}^2f(w_3)|>\frac{p|v|}{\eps^2}$. Hence:
\[
\Pro \left[ \fold(e)\right]\leq \Pro \left[ \sup_{w\in e}|\partial_{v,v}^2f(w)|>\frac{p|v|}{\eps^2} \right] \, .
\]
Let $x_t=\eps tv$ and $g_\eps^v(t)=\partial^2_{v,v}f(x_t)$. By translation invariance of $f$, we have:
\[
\Pro \left[ \sup_{w\in e}|\partial^2_{v,v}f(w)|>\frac{p|v|}{\eps^2} \right] = \prob\left[ \sup_{t\in[0,1]}|g_\eps^v(t)|>\frac{p|v|}{\eps^2}\right] \, .
\]
The strategy is to apply the BTIS inequality to $g_\eps^v$. Note that $g_\eps^v$ is a centered Gaussian field on $[0,1]$ which is a.s. bounded. Hence, by Theorem~2.9 of~\cite{azws}, $\E \left[ | \sup_{t\in[0,1]} g_\eps^v(t) | \right]<+\infty$. Note that $\E \left[ | \sup_{t\in[0,1]} g_\eps^v(t) | \right]$ is non-decreasing in $\eps$, let $C_4=\max_{v' \in \Gamma}\E \left[ | \sup_{t\in[0,1]} g_1^{v'}(t)| \right]$, and choose $\eps_0 \in ]0,1]$ sufficiently small so that $\min_{v'\in\Gamma}\frac{p|v'|}{\eps_0^2}>2C_4$. Note that, by translation invariance:
      \[
      \sigma^2:=\sup_{t\in[0,1]}\var (g_\eps^v(t))=\var(g^v_\eps(0))=\partial^4_{v,v,v,v}\kappa(0) \, .
      \]
If $\partial^4_{v,v,v,v}\kappa(0)=0$, then $g_\eps^v\equiv0$ a.s. and we are done. Assume now that $\partial^4_{v,v,v,v}\kappa(0) > 0$. Then, by the BTIS inequality (see Theorem 2.9 in \cite{azws}), for each $\eps\in]0,\eps_0]$:
        \begin{eqnarray*}
        \prob\left[ \sup_{t\in[0,1]}|g_\eps^v(t)|>\frac{p|v|}{\eps^2}\right] & \leq & 2 \, \prob \left[ \sup_{t\in[0,1]} g_\eps^v(t) >\frac{p|v|}{\eps^2} \right]\\
        & \leq & 2 \, \prob \left[ \Big| \sup_{t\in[0,1]} g_\eps^v(t) - \E [ \sup_{t\in[0,1]} g_\eps^v(t) ] \Big|  > \frac{p|v|}{2\eps^2} \right]\\ & \leq & 4 \, \exp\left(-\frac{1}{2\sigma^2}\left(\frac{p|v|}{2\eps^2}\right)^2\right) \, .
        \end{eqnarray*}
        Since $v$ can take only finitely many values, we have obtain what we want.
\end{proof}

\begin{proof}[Proof of Proposition \ref{p.sprinkling}]
By Proposition~\ref{p.folds} (and by a simple union bound), it is enough to prove that $\cross^\eps_{p/2}(2R,R) \cap  (\cup_e \fold(e))^c  \subseteq \cross_p(2R,R)$, where the union is over each $e \in \calE^\eps_R$. Assume that for every $e \in \calE$ $\fold(e)$ is not satisfied. Then, for each edge $e=(x,y)\in\calE^\eps_R$ which is colored black in the discrete model of parameter $p/2$, and for each $z\in e$, we have $f(z)\geq -p$. In other words, each black edge is contained in $\calD_p$. If in addition $\cross^\eps_{p/2}(2R,R)$ is satisfied, then there exists a crossing of $[0,2R] \times [0,R]$ from left to right made up of black edges. This crossing belongs to $\calD_p$ so that $\cross_p(2R,R)$ is satisfied.
\end{proof}

\bibliographystyle{alpha}
\bibliography{ref_perco}

\ \\
{\bf Alejandro Rivera}\\
Univ. Grenoble Alpes\\
UMP5582, Institut Fourier, 38000 Grenoble, France\\
alejandro.rivera@univ-grenoble-alpes.fr\\
Supported by the ERC grant Liko No 676999\\

\medskip
\ni
{\bf Hugo Vanneuville} \\
Univ. Lyon 1\\
UMR5208, Institut Camille Jordan, 69100 Villeurbanne, France\\
vanneuville@math.univ-lyon1.fr\\
\url{http://math.univ-lyon1.fr/~vanneuville/}\\
Supported by the ERC grant Liko No 676999\\

\end{document}